\theoremstyle{plain}
\newtheorem{theorem}{Theorem}[section]
\newtheorem{definition}[theorem]{Definition}
\newtheorem{proposition}[theorem]{Proposition}
\newtheorem{lemma}[theorem]{Lemma}
\newtheorem{remark}[theorem]{Remark}
\numberwithin{theorem}{section}
\numberwithin{equation}{section}
\newcommand{\average}{{\mathchoice {\kern1ex\vcenter{\hrule height.4pt
width 6pt depth0pt} \kern-9.7pt} {\kern1ex\vcenter{\hrule
height.4pt width 4.3pt depth0pt} \kern-7pt} {} {} }}
\renewcommand{\phi}{\varphi}
\newcommand{\be}{\begin{equation}}
\newcommand{\ee}{\end{equation}}
\renewcommand{\epsilon}{\varepsilon}
\title[]{ Nonlinear eigenvalue problems and bifurcation for quasi-linear elliptic operators}
\author{Emmanuel Wend Benedo Zongo and Bernhard Ruf}
\begin{document}
\address{Dip. di Matematica, Via Saldini 50, 20133 Milano, Italy}
\email{wend.zongo@unimi.it}

\address{Dip. di Matematica, Via Saldini 50, 20133 Milano, Italy}
\email{bernhard.ruf@unimi.it}

\maketitle
\begin{abstract}
 In this paper, we analyze an eigenvalue problem for quasi-linear elliptic operators involving homogeneous Dirichlet boundary conditions in a open smooth bounded domain. We show that the eigenfunctions corresponding to the eigenvalues belong to $L^{\infty}$, which implies $C^{1,\alpha}$ smoothness, and the first eigenvalue is simple. Moreover, we investigate the bifurcation results from trivial solutions using the Krasnoselski bifurcation theorem and from infinity using the Leray-Schauder degree. We also show the existence of multiple critical points using variational methods and the Krasnoselski genus.\\
 \\
Keywords: quasi-linear operators, bifurcation, bifurcation from infinity, multiple solutions\\
2010 Mathematics Subject Classification: 35J20, 35J92, 35J25. 
\end{abstract}
\tableofcontents
\section{Introduction}
\indent\indent Assume $\Omega\subset\mathbb{R}^N$ ($N\geq 2$) is an open bounded domain with smooth boundary $\partial\Omega$. A classical result in the theory of eigenvalue problems guarantees that the problem
\begin{equation}\label{e1}
\left\{
\begin{array}{l}
-\Delta u=\displaystyle \lambda  u~~\text{in $\Omega$},\\
u = \displaystyle 0~~~~~~~~\text{on $\partial\Omega$}
\end{array}
\right.
\end{equation}
possesses a nondecreasing sequence of eigenvalues and a sequence of corresponding eigenfunctions which define a Hilbert basis in $L^2(\Omega)$ [see, \cite{Hen}]. Moreover, it is known that the first eigenvalue of problem (\ref{e1}) is characterized in the variational point of view by,
$$
\lambda^D_1:=\inf_{u\in W^{1,2}_0(\Omega)\backslash\{0\}}\left\{
\frac{\int_{\Omega}|\nabla u|^2~dx}{\int_{\Omega}u^2~dx}\right\}
.$$ 
Suppose that $p>1$ is a given real number and consider the nonlinear eigenvalue problem with Neumann boundary condition
\begin{equation}\label{e2}
\left\{
\begin{array}{l}
-\Delta_p u=\displaystyle \lambda  u~~\text{in $\Omega$},\\
\frac{\partial u}{\partial\nu} = \displaystyle 0~~~~~~~~\text{on $\partial\Omega$}
\end{array}
\right.
\end{equation}
where $\Delta_p u:=\text{div}(|\nabla u|^{p-2}\nabla u)$ stands for the p-Laplace operator and $\lambda\in\mathbb{R}$. This problem was considered in \cite{ma}, and using a direct method in calculus of variations (if $p>2$) or a mountain-pass argument (if $p\in (\frac{2N}{N+2},2)$) it was shown that the set of eigenvalues of problem (\ref{e2}) is exactly the interval $[0,\infty)$. Indeed, it is sufficient to find one positive eigenvalue, say $-\Delta_p u=\lambda u.$ Then a continuous family of eigenvalues can be found by the reparametrization $u=\alpha v,$ satisfying $-\Delta_p v=\mu(\alpha)v$, with $\mu(\alpha)=\frac{\lambda}{\alpha^{p-2}}.$
\\
 \indent In this paper, we consider the so-called $(p,2)$-Laplace operator [see, \cite{ZXA}] with Dirichlet boundary conditions. More precisely, we analyze the following nonlinear eigenvalue problem,
 \begin{equation}\label{e3}
\left\{
\begin{array}{l}
-\Delta_p u-\Delta u=\displaystyle \lambda  u~~\text{in $\Omega$},\\
u = \displaystyle 0~~~~~~~~~~~~~~~~~~\text{on $\partial\Omega$}
\end{array}
\right.
\end{equation}
where $p\in(1,\infty)\backslash\{2\}$ is a real number. We recall that if $1<p<q,$ then $L^q(\Omega)\subset L^p(\Omega)$ and as a consequence, one has $W^{1,q}_0(\Omega)\subset W^{1,p}_0(\Omega).$ 
We will say that $\lambda\in\mathbb{R}$ is an eigenvalue of problem (\ref{e3}) if there exists $u\in W^{1,p}_0(\Omega)\backslash\{0\}$ (if $p>2$ ), $u\in W^{1,2}_0(\Omega)\backslash\{0\}$ (if $1<p<2$) such that 
\begin{equation}\label{v1}
\int_{\Omega}|\nabla u|^{p-2}\nabla u\cdot \nabla v~dx+\int_{\Omega}\nabla u\cdot \nabla v~dx=\lambda\int_{\Omega}u~v~dx, 
\end{equation}
for all $v\in W^{1,p}_{0}(\Omega)$ (if $p>2$), $v\in W^{1,2}_{0}(\Omega)$ (if $1<p<2$). In this case, such a pair $(u,\lambda)$ is called an eigenpair, and $\lambda\in \mathbb{R}$ is called an eigenvalue and $u\in W^{1,p}_0(\Omega)\backslash\{0\}$ is an eigenfunction associated to $\lambda.$ We say that $\lambda$ is a "first eigenvalue", if the corresponding eigenfunction $u$ is positive or negative.
\\
The operator $-\Delta_p-\Delta$ appears in quantum field theory [see, \cite{Ben}], where it arises in the mathematical description of propagation phenomena of solitary waves. We recall that a solitary wave is a wave which propagates without any temporal evolution in shape.
\\
The operator $-\Delta_p-\Delta$ is a special case of the so called $(p,q)$-Laplace operator given by $-\Delta_p-\Delta_q$ which has been widely studied; for some  results related to our studies,  see e.g., [ \cite{BT1,BT2, CD, Ta, MM} ].\\
The main purpose of this work is to study the nonlinear eigenvalue problem (\ref{e3})   when $p>2,$ and $1<p<2$ respectively . In particular, we show in section \ref{S2} that the set of the first eigenvalues is given by the interval $(\lambda_1^D,\infty)$, where $\lambda_1^D$ is the first Dirichlet eigenvalue of the Laplacian. We show that the first eigenvalue of (\ref{e3}) can be obtained variationally, using a Nehari set for $1<p<2$, and a minimization for $p>2.$
Also in the same section, we recall some results of \cite{ma}, \cite{mi} and \cite{MV}.
In section \ref{S3}, we prove that the eigenfunctions associated to $\lambda$ belong to $L^{\infty}(\Omega)$, the first eigenvalue $\lambda_1^D$ of problem (\ref{e3}) is simple and the corresponding eigenfunctions are positive or negative. In addition, in section \ref{S5} we show a homeomorphism property related to $-\Delta_p-\Delta$. \\
In section \ref{S6}, we prove that $\lambda_1^D$ is a bifurcation point for a branch of first eigenvalues from zero if $p>2,$ and $\lambda_1^D$ is a bifurcation point from infinity if $p<2.$ Also the higher Dirichlet eigenvalues $\lambda^D_k$ are bifurcation points (from $0$ if $p>2,$ respectively from infinity if $1<p<2$ ), if the multiplicity of $\lambda^D_k$  is odd.
Finally in section \ref{S}, we prove by variational methods that if $\lambda\in (\lambda^D_k,\lambda^D_{k+1})$ then there exist at least $k$ nonlinear eigenvalues using  Krasnoselski's genus.
In what follows, we denote by $\|.\|_{1,p}$ and $\|.\|_2$ the norms on $W^{1,p}_0(\Omega)$ and $L^2(\Omega)$  defined respectively by $$\|u\|_{1,p}=\left(\int_{\Omega}|\nabla u|^p~dx\right)^{\frac{1}{p}}~~\text{and}~~\|u\|_2=\left(\int_{\Omega}|u|^2~dx\right)^{\frac{1}{2}},~~\text{for all}~~u\in W^{1,p}_0(\Omega),~~u\in L^2(\Omega).$$
We recall the Poincar\'e inequality, i.e., there exists a positive constant $C_p(\Omega)$ such that 
\begin{equation}\label{pc}
\int_{\Omega}|u|^p~dx\leq C_p(\Omega)\int_{\Omega}|\nabla u|^p~dx~~\text{for all $u\in W^{1,p}_0(\Omega)$},~~1<p<\infty.
\end{equation}
\section{The spectrum of the nonlinear problem}\label{S2}
We now begin with the discussion of the properties of the spectrum of the nonlinear eigenvalues problem (\ref{e3}).
\begin{remark}\label{rm1}
\textup{Any $\lambda\leq 0$ is not an eigenvalue of problem (\ref{e3}).}
\end{remark}
\noindent Indeed, suppose by contradiction that $\lambda=0$ is an eigenvalue of equation (\ref{e3}), then relation (\ref{v1}) with $v=u_{0}$ gives $$\int_{\Omega}|\nabla u_{0}|^p~dx+\int_{\Omega}|\nabla u_{0}|^2~dx=0.$$ Consequently $|\nabla u_{0}|=0$, therefore $u_0$ is constant on $\Omega$ and $u_0=0$ on $\Omega$. And this contradicts the fact that $u_0$ is a nontrivial eigenfunction. Hence $\lambda=0$ is not an eigenvalue  of problem $(\ref{e3}).$\\
Now it remains to show that any $\lambda<0$ is not an eigenvalue of (\ref{e3}). Suppose by contradiction that $\lambda<0$ is an eigenvalue of (\ref{e3}), with $u_{\lambda}\in W^{1,p}_0(\Omega)\backslash\{0\}$ the corresponding eigenfunction. The relation (\ref{v1}) with $v=u_{\lambda}$ implies $$0\leq \int_{\Omega}|\nabla u_{\lambda}|^p~dx+\int_{\Omega}|\nabla u_{\lambda}|^2~dx=\lambda\int_{\Omega}u_{\lambda}^2~dx<0.$$  Which yields a contradiction and thus $\lambda<0$ cannot be an eigenvalue of problem (\ref{e3}).
\begin{lemma}\label{l1}
\textup{Any $\lambda\in(0,\lambda^D_1]$ is not an eigenvalue of (\ref{e3}).}
\end{lemma}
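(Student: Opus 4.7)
The plan is to test the weak formulation \eqref{v1} against $v=u$ itself and combine the result with the variational characterization of $\lambda_1^D$ to force the Dirichlet integral of $u$ to vanish, contradicting nontriviality.

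\medskip

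First I would assume for contradiction that $\lambda\in(0,\lambda_1^D]$ is an eigenvalue with eigenfunction $u_\lambda\in W^{1,p}_0(\Omega)\setminus\{0\}$ (or $W^{1,2}_0(\Omega)\setminus\{0\}$ when $1<p<2$). Inserting $v=u_\lambda$ into \eqref{v1} yields the energy identity
\begin{equation*}
\int_{\Omega}|\nabla u_\lambda|^{p}\,dx+\int_{\Omega}|\nabla u_\lambda|^{2}\,dx=\lambda\int_{\Omega}u_\lambda^{2}\,dx.
\end{equation*}
Next I would invoke the Rayleigh quotient characterization
\begin{equation*}
\lambda_1^D\int_{\Omega}u_\lambda^{2}\,dx\le\int_{\Omega}|\nabla u_\lambda|^{2}\,dx,
\end{equation*}
which applies since the eigenfunction lies at least in $W^{1,2}_0(\Omega)$ in both ranges of $p$ (this is why the two separate functional settings in the statement of the problem cause no trouble here).

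\medskip

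Combining the two relations, the hypothesis $\lambda\le\lambda_1^D$ gives
\begin{equation*}
\int_{\Omega}|\nabla u_\lambda|^{p}\,dx+\int_{\Omega}|\nabla u_\lambda|^{2}\,dx=\lambda\int_{\Omega}u_\lambda^{2}\,dx\le\frac{\lambda}{\lambda_1^D}\int_{\Omega}|\nabla u_\lambda|^{2}\,dx\le\int_{\Omega}|\nabla u_\lambda|^{2}\,dx,
\end{equation*}
which forces $\int_{\Omega}|\nabla u_\lambda|^{p}\,dx\le 0$. Hence $|\nabla u_\lambda|\equiv 0$, so $u_\lambda$ is constant on $\Omega$ and, by the Dirichlet condition, $u_\lambda\equiv 0$, contradicting the fact that $u_\lambda$ is a nontrivial eigenfunction.

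\medskip

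There is really no substantive obstacle: the only subtlety to verify is that $u_\lambda\in W^{1,2}_0(\Omega)$ so that the Poincaré/Rayleigh inequality for $\lambda_1^D$ is legitimately available. For $1<p<2$ this holds by definition of the functional setting adopted for \eqref{e3}, while for $p>2$ it follows from the inclusion $W^{1,p}_0(\Omega)\subset W^{1,2}_0(\Omega)$ recalled in the introduction. Once this is noted, the proof is essentially the one sentence computation above.
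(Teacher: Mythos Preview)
Your proof is correct and follows essentially the same strategy as the paper: test \eqref{v1} with $v=u_\lambda$ to obtain the energy identity, then invoke the variational characterization of $\lambda_1^D$ to force $\int_\Omega|\nabla u_\lambda|^p\,dx\le 0$. The only cosmetic difference is that the paper treats $\lambda\in(0,\lambda_1^D)$ and $\lambda=\lambda_1^D$ as two separate cases (deriving $(\lambda_1^D-\lambda)\int_\Omega u_\lambda^2\,dx\le 0$ in the first), whereas your chain of inequalities handles both at once; your version is slightly more economical but the content is identical.
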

\noindent For the proof see also \cite{ma}.
\begin{proof}
Let $\lambda\in(0,\lambda^D_1)$, i.e., $\lambda^D_1>\lambda.$ Let's assume by contradiction that there exists a $\lambda\in(0,\lambda^D_1)$ which is an eigenvalue of (\ref{e3}) with $u_{\lambda}\in W^{1,2}_0(\Omega)\backslash\{0\}$ the corresponding eigenfunction. Letting $v=u_{\lambda}$ in relation (\ref{v1}), we have on the one hand, $$\int_{\Omega}|\nabla u_{\lambda}|^p~dx+\int_{\Omega}|\nabla u_{\lambda}|^2~dx=\lambda\int_{\Omega}u_{\lambda}^2~dx$$
and on the other hand,  
\begin{equation}\label{c01}
\lambda^D_1 \int_{\Omega}u_{\lambda}^2dx\leq \int_{\Omega}|\nabla u_{\lambda}|^2~dx.
\end{equation}
 By subtracting both side of (\ref{c01}) by $ \lambda\displaystyle{\int_{\Omega}}u_{\lambda}^2~dx$, we obtain $$(\lambda^D_1-\lambda)
\int_{\Omega}u^2_{\lambda}~dx\leq \int_{\Omega}|\nabla u_{\lambda}|^2~dx-\lambda\int_{\Omega}u_{\lambda}^2~dx,$$ $$(\lambda^D_1-\lambda)\int_{\Omega}u^2_{\lambda}~dx\leq \int_{\Omega}|\nabla u_{\lambda}|^2~dx-\lambda\int_{\Omega}u_{\lambda}^2~dx+\int_{\Omega}|\nabla u_{\lambda}|^p~dx=0.$$ Therefore $(\lambda^D_1-\lambda)
\displaystyle{\int_{\Omega}}u^2_{\lambda}~dx\leq 0,$ which is a contradiction. Hence, we conclude that $\lambda\in(0,\lambda^D_1)$ is not an eigenvalue of problem (\ref{e3}).  In order to complete the proof of the Lemma \ref{l1} we shall show that $\lambda=\lambda^D_1$ is not an eigenvalue of (\ref{e3}).\\ By contradiction we assume that $\lambda=\lambda^D_1$ is an eigenvalue of (\ref{e3}). So there exists $u_{\lambda^D_1}\in W^{1,2}_0(\Omega)\backslash\{0\}$ such that relation (\ref{v1}) holds true. Letting $v=u_{\lambda^D_1}$ in relation (\ref{v1}), it follows that $$\int_{\Omega}|\nabla u_{\lambda^D_1}|^p~dx+\int_{\Omega}|\nabla u_{\lambda^D_1}|^2~dx=\lambda^D_1
\int_{\Omega}u_{\lambda^D_1}^2~dx.$$ But $\lambda^D_1
\displaystyle{\int_{\Omega}}u_{\lambda^D_1}^2~dx\leq \displaystyle{\int_{\Omega}}|\nabla u_{\lambda^D_1}|^2~dx,$ therefore 
$$\int_{\Omega}|\nabla u_{\lambda^D_1}|^p~dx+\int_{\Omega}|\nabla u_{\lambda^D_1}|^2~dx\leq \int_{\Omega}|\nabla u_{\lambda^D_1}|^2~dx \Rightarrow \int_{\Omega}|\nabla u_{\lambda^D_1}|^p~dx\leq 0.$$ Using relation (\ref{pc}), we have $u_{\lambda^D_1}=0,$ which is a contradiction since $u_{\lambda^D_1}\in W^{1,2}_0(\Omega)\backslash\{0\}.$ So $\lambda=\lambda^D_1$ is not an eigenvalue of (\ref{e3}).
\end{proof}
\begin{theorem}\label{th1}
\textup{ Assume $p\in(1,2)$. Then the set of first eigenvalues of problem $(\ref{e3})$ is given by
$$(\lambda^D_1,\infty),~~\text{where $\lambda^D_1$~\text{denotes the first eigenvalue of} $-\Delta$ \text{on} $\Omega$}.$$}
\end{theorem}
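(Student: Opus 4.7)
The plan is to combine Remark \ref{rm1} and Lemma \ref{l1}, which already exclude every $\lambda\le\lambda_1^D$ from the spectrum, with a Nehari-type minimization that produces, for each fixed $\lambda>\lambda_1^D$, a positive eigenfunction of (\ref{e3}). Since $1<p<2$ one has $W^{1,2}_0(\Omega)\subset W^{1,p}_0(\Omega)$, so I would work in the Hilbert space $W^{1,2}_0(\Omega)$ with the energy
\[
J_\lambda(u)=\frac{1}{p}\int_\Omega|\nabla u|^p\,dx+\frac{1}{2}\int_\Omega|\nabla u|^2\,dx-\frac{\lambda}{2}\int_\Omega u^2\,dx,
\]
whose critical points are weak solutions of (\ref{e3}), and the Nehari set
\[
\mathcal{N}_\lambda=\bigl\{u\in W^{1,2}_0(\Omega)\setminus\{0\}:\|u\|_{1,p}^p+\|u\|_{1,2}^2=\lambda\|u\|_2^2\bigr\},
\]
on which $J_\lambda(u)=\bigl(\tfrac{1}{p}-\tfrac{1}{2}\bigr)\|u\|_{1,p}^p>0$.

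To see $\mathcal{N}_\lambda\ne\emptyset$, I would use a fiber projection: for any $u\in W^{1,2}_0\setminus\{0\}$ with $\|u\|_{1,2}^2<\lambda\|u\|_2^2$ (for instance $u=\phi_1$, the first Dirichlet eigenfunction, which satisfies this because $\lambda>\lambda_1^D$), the map $t\mapsto J_\lambda(tu)=\tfrac{t^p}{p}A-\tfrac{t^2}{2}B$ with $A:=\|u\|_{1,p}^p>0$ and $B:=\lambda\|u\|_2^2-\|u\|_{1,2}^2>0$ has a unique positive maximum at $t(u)=(A/B)^{1/(2-p)}$, and $t(u)u\in\mathcal{N}_\lambda$. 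The shape relies crucially on $p<2$: $t^p$ dominates $t^2$ near $0$, so the fiber map is positive on a neighborhood of the origin and decreases to $-\infty$ at infinity. Setting $m_\lambda:=\inf_{\mathcal{N}_\lambda}J_\lambda$ and taking a minimizing sequence $(u_n)\subset\mathcal{N}_\lambda$, the explicit formula gives boundedness of $\|u_n\|_{1,p}$; boundedness of $\|u_n\|_{1,2}$ follows by renormalization: if $\|u_n\|_{1,2}\to\infty$, setting $v_n:=u_n/\|u_n\|_{1,2}$ and dividing the Nehari identity by $\|u_n\|_{1,2}^2$ yields
\[
\|u_n\|_{1,2}^{p-2}\|v_n\|_{1,p}^p+1=\lambda\|v_n\|_2^2;
\]
since $\|u_n\|_{1,2}^{p-2}\to0$ (because $p<2$) and $\|v_n\|_{1,p}=\|u_n\|_{1,p}/\|u_n\|_{1,2}\to0$, Poincar\'e (\ref{pc}) gives $v_n\to0$ in $L^p$, hence a.e.\ along a subsequence, while the compact embedding $W^{1,2}_0\embed L^2$ yields $v_n\to v=0$ in $L^2$, contradicting $\|v_n\|_2^2\to1/\lambda>0$.

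Passing to a subsequence with $u_n\weakto u_*$ in $W^{1,2}_0$ and $u_n\to u_*$ in $L^2$, an analogous renormalization rules out $u_*=0$ (which simultaneously forces $m_\lambda>0$). Weak lower semicontinuity gives $\|u_*\|_{1,p}^p+\|u_*\|_{1,2}^2\le\lambda\|u_*\|_2^2$, so the Nehari projection parameter satisfies $t(u_*)\in(0,1]$; combining this with the fiber-maximum property shows $u_*$ itself attains $m_\lambda$ and lies in $\mathcal{N}_\lambda$. A Lagrange-multiplier computation on $\mathcal{N}_\lambda$---pairing $J_\lambda'(u_*)=\mu\Phi'(u_*)$, where $\Phi(u):=\|u\|_{1,p}^p+\|u\|_{1,2}^2-\lambda\|u\|_2^2$, against $u_*$, and using $\langle\Phi'(u_*),u_*\rangle=(p-2)\|u_*\|_{1,p}^p\ne0$---forces $\mu=0$, so $u_*$ is a weak solution of (\ref{e3}) with eigenvalue $\lambda$. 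Finally, since $|u_*|\in\mathcal{N}_\lambda$ with $J_\lambda(|u_*|)=J_\lambda(u_*)=m_\lambda$, $|u_*|$ is also a minimizer, hence a weak solution; the strong maximum principle for the $(p,2)$-Laplace operator then yields $|u_*|>0$ in $\Omega$, so $u_*$ has constant sign and $\lambda$ is a first eigenvalue.

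The principal obstacle is the compactness analysis above: proving boundedness of $\|u_n\|_{1,2}$ and excluding the trivial limit $u_*=0$, so as to secure $m_\lambda>0$ and realize the infimum at a nontrivial function. The mixed $(p,2)$-growth with $p<2$ makes the $p$-Laplacian subordinate to the Laplacian, and one must carefully intertwine the Nehari identity with compact Sobolev embeddings; the argument is most delicate in the range $p\le\tfrac{2N}{N+2}$, where $W^{1,p}_0\not\embed L^2$ and one must truly exploit the ambient $W^{1,2}_0$-structure (in particular interpolation between $L^p$ and $L^{2^*}$) to close the estimates.
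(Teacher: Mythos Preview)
Your proposal is correct and follows essentially the same Nehari-manifold strategy as the paper: show $\mathcal{N}_\lambda\neq\emptyset$ via the fibering map, prove that minimizing sequences are bounded in $W^{1,2}_0(\Omega)$ by a renormalization-plus-compactness argument, exclude the trivial limit to get $m_\lambda>0$ and attainment, and then verify that the minimizer is an unconstrained critical point with constant sign. The only cosmetic differences are that the paper normalizes by $\|u_k\|_2$ rather than $\|u_k\|_{1,2}$ in the boundedness step, and in place of your Lagrange-multiplier computation it differentiates the curve $\delta\mapsto J_\lambda\bigl(t(\delta)(u+\delta v)\bigr)$ directly (which amounts to the same thing since $\langle J_\lambda'(u),u\rangle=0$ on $\mathcal{N}_\lambda$); your concern about the range $p\le 2N/(N+2)$ is unfounded, as the argument never uses $W^{1,p}_0\hookrightarrow L^2$ but only the compact embedding $W^{1,2}_0\hookrightarrow L^2$ together with weak lower semicontinuity of $\|\cdot\|_{1,p}$.
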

\begin{proof}
Let $\lambda\in (\lambda_1^D,\infty)$,
and define the energy functional $$J_{\lambda}: W^{1,2}_0(\Omega)\rightarrow\mathbb{R}~~
\text{by}~~J_{\lambda}(u)=\int_{\Omega}|\nabla u|^2~dx+\frac{2}{p}\int_{\Omega}|\nabla u|^p~dx-\lambda\int_{\Omega}u^2~dx.$$ 
One shows that $J_{\lambda}\in C^1(W^{1,2}_0(\Omega),\mathbb{R})$~(see,\cite{ZXA}) with its derivatives given by $$\langle J'_{\lambda}(u),v\rangle=2\int_{\Omega}\nabla u\cdot\nabla v~dx+2
\int_{\Omega}|\nabla u|^{p-2}\nabla u\cdot\nabla v~dx-2\lambda\int_{\Omega}u~v~dx~~
,~\forall~v\in W^{1,2}_0(\Omega). $$
Thus we note that $\lambda$ is an eigenvalue of problem (\ref{e3}) if and only if $J_{\lambda}$ possesses a nontrivial critical point. Considering $J_{\lambda}(\rho e_1),$ where $e_1$ is the $L^2$-normalized first eigenfunction of the Laplacian, we see that
$$J_{\lambda}(\rho e_1)\leq \lambda_1^D\rho^2+C\rho^p-\lambda\rho^2\rightarrow-\infty,~~~\text{as}~~\rho\rightarrow+
\infty.$$
Hence, we cannot establish the coercivity of $J_{\lambda}$ on $W^{1,2}_0(\Omega)$ for $p\in(1,2)$, and consequently we cannot use a direct method in calculus of variations in order to determine a critical point of $J_{\lambda}.$ 
To overcome this difficulty, the idea will be to analyze the functional $J_{\lambda}$ on the so called Nehari manifold defined by \begin{eqnarray*}
\mathcal{N}_{\lambda}
:=\left\{u\in W^{1,2}_0(\Omega)\backslash\{0\}~:\int_{\Omega}|\nabla u|^2~dx+\int_{\Omega}|\nabla u|^p~dx=\lambda\int_{\Omega}u^2~dx\right\}.
\end{eqnarray*}
Note that all non-trivial solutions of (\ref{e3}) lie on $\mathcal{N}_{\lambda}.$ On $\mathcal{N}_{\lambda}$ the functional $J_{\lambda}$ takes the form \begin{eqnarray*}
J_{\lambda}(u)&=&\int_{\Omega}|\nabla u|^2~dx+\frac{2}{p}\int_{\Omega}|\nabla u|^p~dx-\lambda\int_{\Omega}u^2~dx\\
&=&(\frac{2}{p}-1)\int_{\Omega}|\nabla u|^p~dx>0.
\end{eqnarray*}
We have seen in Lemma \ref{l1}, that any $\lambda\in (0,\lambda_1^D]$ is not an eigenvalue of problem (\ref{e3}); see also \cite{ma}. It remains to prove the :\\
\textbf{Claim :} Every $\lambda\in (\lambda^D_1,\infty)$ is a first eigenvalue of problem (\ref{e3}).
 Indeed,
we will split the proof of the claim into four steps.
\begin{enumerate}
\item[Step 1.]
Here we will show that $\mathcal{N}_{\lambda}\neq \emptyset$ and every minimizing sequence for $J_{\lambda}$ on $\mathcal{N}_{\lambda}$ is bounded in $W^{1,2}_0(\Omega)$.
Since $\lambda>\lambda^D_1$ there exists $v_{\lambda}\in W^{1,2}_0(\Omega)$ such that $$\int_{\Omega}|\nabla v_{\lambda}|^2~dx<\lambda\int_{\Omega}v^2_{\lambda}~dx.$$  Then there exists $t>0$ such that 
$tv_{\lambda}\in\mathcal{N}_{\lambda}.$ In fact
$$\int_{\Omega}|\nabla(tv_{\lambda})|^2~dx+\int_{\Omega}|\nabla(tv_{\lambda})|^p~dx=\lambda\int_{\Omega}(tv_{\lambda})^2~dx\Rightarrow$$
$$t^2\int_{\Omega}|\nabla v_{\lambda}|^2~dx+t^p\int_{\Omega}|\nabla v_{\lambda}|^p~dx=t^2\lambda\int_{\Omega}v_{\lambda}^2~dx
\Rightarrow$$ 
$$t=\left(\frac{\int_{\Omega}|\nabla v_{\lambda}|^p~dx}{\lambda
\int_{\Omega}v_{\lambda}^2~dx-
\int_{\Omega}|\nabla v_{\lambda}|^2~dx}\right)^{\frac{1}{2-p}}>0.$$ With such $t$ we have $tv_{\lambda}\in\mathcal{N}_{\lambda}$ and $\mathcal{N}_{\lambda}\neq \emptyset.$\\
\\
Note that for $u\in B_r(v_{\lambda}),$ $r>0$ small, the inequality $\lambda\int_{\Omega}|u|^2dx>\int_{\Omega}|\nabla u|^2dx$ remains valid, and then $t(u)u\in\mathcal{N}_{\lambda}$ for $u\in B_r(v_{\lambda}).$ Since $t(u)\in C^1$ we conclude that $\mathcal{N}_{\lambda}$ is a $C^1$-manifold.\\
Let $\{u_k\}\subset \mathcal{N}_{\lambda}$ be a minimizing sequence of $J_{\lambda}|_{\mathcal{N}_{\lambda}}$, i.e. $J_{\lambda}(u_k)\rightarrow m=\displaystyle{\inf_{w\in\mathcal{N}_{\lambda}}}J_{\lambda}(w).$ Then
\begin{equation}\label{e5}
\lambda\int_{\Omega}u^2_k~dx-
\int_{\Omega}|\nabla u_k|^2~dx=\int_{\Omega}|\nabla u_k|^p~dx\rightarrow\left(\frac{2}{p}-1\right)^{-1}m~\text{as $k\rightarrow\infty$}.
\end{equation}
Assume by contradiction that $\{u_k\}$ is not bounded in $W^{1,2}_0(\Omega)$, i.e. $\displaystyle\int_{\Omega}|\nabla u_k|^2~dx\rightarrow\infty$ as $k\rightarrow\infty$. It follows that $\displaystyle{\int_{\Omega}}u^2_k~dx
\rightarrow\infty$ as $k\rightarrow\infty$, thanks to relation (\ref{e5}). We set $v_k=\frac{u_k}{\|u_k\|_2}.$ Since $\displaystyle{\int_{\Omega}}|\nabla u_k|^2~dx<\lambda\displaystyle{\int_{\Omega}}u_k^2~dx
$, we deduce that $\displaystyle{\int_{\Omega}}|\nabla v_k|^2~dx<\lambda,$ for each $k$ and  $\|v_k\|_{1,2}<\sqrt{\lambda}.$ Hence $\{v_k\}\subset W^{1,2}_0(\Omega)$ is bounded in $W^{1,2}_0(\Omega).$ Therefore there exists $v_0\in W^{1,2}_0(\Omega)$ such that $v_k\rightharpoonup v_0$ in $W^{1,2}_0(\Omega)\subset W^{1,p}_0(\Omega)$ and $v_k\rightarrow v_0$ in $L^2(\Omega).$ Dividing relation (\ref{e5}) by $\|u_k\|^p_{2}$, we get $$\int_{\Omega}|\nabla v_k|^p~dx=\frac{\lambda\displaystyle{\int_{\Omega}}u^2_k~dx-
\displaystyle{\int_{\Omega}}|\nabla u_k|^2~dx}{\|u_k\|^p_{2}}\rightarrow 0~~\text{as~k $\rightarrow\infty$},$$ since $\lambda\displaystyle{\int_{\Omega}}u^2_k~dx-
\displaystyle{\int_{\Omega}}|\nabla u_k|^2~dx\rightarrow \left(\frac{2}{p}-1\right)^{-1}m<\infty$ and $\|u_k\|^p_{2}\rightarrow\infty$ as $k\rightarrow\infty$. On the other hand, since $v_k\rightharpoonup v_0$ in $W^{1,p}_0(\Omega)$, we have $\displaystyle{\int_{\Omega}}|\nabla v_0|^p~dx\leq \lim_{k\rightarrow\infty}\inf\displaystyle
{\int_{\Omega}}|\nabla v_k|^p~dx=0$ and consequently $v_0=0$. It follows that $v_k\rightarrow 0$ in $L^2(\Omega),$ which is a contradiction since $\|v_k\|_{2}=1$. Hence, $\{u_k\}$ is bounded in $W^{1,2}_0(\Omega).$\\
\item[Step 2.] $m=
\displaystyle{\inf_{w\in\mathcal{N}_{\lambda}}}J_{\lambda}(w)>0.$ Indeed,
assume by contradiction that $m=0$. Then, for $\{u_k\}$ as in step 1, we have

\begin{equation}\label{e6}
0<\lambda\int_{\Omega}u^2_k~dx-
\int_{\Omega}|\nabla u_k|^2~dx=\int_{\Omega}|\nabla u_k|^p~dx\rightarrow 0, \text{as $k\rightarrow\infty$}.
\end{equation}
By Step 1, we deduce that $\{u_k\}$ is bounded in $W^{1,2}_0(\Omega).$ Therefore there exists $u_0\in W^{1,2}_0(\Omega)$ such that $u_k \rightharpoonup u_0$ in $W^{1,2}_0(\Omega)$ and $W^{1,p}_0(\Omega)$ and $u_k\rightarrow u_0$ in $L^2(\Omega).$

Thus $\displaystyle{\int_{\Omega}}|\nabla u_0|^p~dx\leq \lim_{k\rightarrow\infty}\inf\displaystyle
{\int_{\Omega}}|\nabla u_k|^p~dx=0.$ And consequently  $u_0=0$, $u_k \rightharpoonup 0$ in $W^{1,2}_0(\Omega)$ and $W^{1,p}_0(\Omega)$ and $u_k\rightarrow 0$ in $L^2(\Omega).$ Writing again $v_k=\frac{
u_k}{\|u_k\|_2}$ we have
 $$0<\frac{\lambda\displaystyle{\int_{\Omega}}u^2_k~dx-\int_{\Omega}|\nabla u_k|^2~dx}{\|u_k\|^2_{2}}=\|u_k\|^{p-2}_{2} \displaystyle{\int_{\Omega}}|\nabla v_k|^p~dx,$$ therefore
\begin{eqnarray*}
\int_{\Omega}|\nabla v_k|^p~dx
&=&\|u_k\|^{2-p}_{2}\left(\frac{\lambda\|u_k\|^2_2}{\|u_k\|^2_{2}}-\frac{\displaystyle{\int_{\Omega}}|\nabla u_k|^2~dx}{\|u_k\|^2_{2}}\right)\\
&=&\|u_k\|^{2-p}_{2}\left(\lambda-
\displaystyle{\int_{\Omega}}|\nabla v_k|^2~dx\right)\rightarrow 0~ \text{as $k\rightarrow \infty$},
\end{eqnarray*}
since $\|u_k\|_{2}\rightarrow 0$ and $p\in(1,2)$, and $\{v_k\}$ is bounded in $W_0^{1,2}(\Omega).$ Next since $v_k \rightharpoonup v_0$ in $W^{1,2}_0(\Omega)\subset W^{1,p}_0(\Omega)$, we deduce that $\displaystyle{\int_{\Omega}}|\nabla v_0|^p~dx\leq \lim_{k\rightarrow\infty}\inf\displaystyle
{\int_{\Omega}}|\nabla v_k|^p~dx=0$ and we have $v_0=0.$ And it follows that $v_k\rightarrow 0$ in $L^2(\Omega)$ which is a contradiction since $\|v_k\|_{2}=1$ for each k. Hence, $m$ is positive.
\item[Step 3.]
There exists $u_0\in \mathcal{N}_{\lambda}$ such that $J_{\lambda}(u_0)=m.$

Let $\{u_k\}\subset\mathcal{N}_{\lambda}$ be a minimizing sequence, i.e., $J_{\lambda}(u_k)\rightarrow m$ as $k\rightarrow\infty.$ Thanks to Step 1, we have that $\{u_k\}$ is bounded in $W_0^{1,2}(\Omega).$ It follows that there exists $u_0\in W_0^{1,2}(\Omega)$ such that $u_k\rightharpoonup u_0$ in $W_0^{1,2}(\Omega)$ and $W_0^{1,p}(\Omega)$ and strongly in $L^2(\Omega).$ The results in the two steps above guarantee that $J_{\lambda}(u_0)\leq \displaystyle{\lim_{k\rightarrow\infty}}\inf J_{\lambda}(u_k)=m.$ Since for each $k$ we have $u_k\in\mathcal{N}_{\lambda}$, we have
\begin{equation}\label{Z3}
\int_{\Omega}|\nabla u_k|^2~dx+\int_{\Omega}|\nabla u_k|^p~dx=\lambda \int_{\Omega}u^2_k~dx~~~\text{for all $k$.}
\end{equation}
Assuming $u_0\equiv 0$ on $\Omega$ implies that $ \displaystyle{\int_{\Omega}}u^2_k~dx\rightarrow 0$ as $k\rightarrow \infty$, and by relation $(\ref{Z3})$ we obtain that $\displaystyle{\int_{\Omega}}|\nabla u_k|^2~dx\rightarrow 0$ as $k\rightarrow \infty.$ Combining this with the fact that $u_k$ converges weakly to $0$ in $W_0^{1,2}(\Omega)$, we deduce that $u_k$ converges strongly to $0$ in $W_0^{1,2}(\Omega)$ and consequently in $W_0^{1,p}(\Omega)$. Hence we infer that \begin{eqnarray*}
\lambda\int_{\Omega}u^2_k~dx-
\int_{\Omega}|\nabla u_k|^2~dx=\int_{\Omega}|\nabla u_k|^p~dx\rightarrow 0, \text{as $k\rightarrow\infty$}.
\end{eqnarray*}
Next, using similar argument as the one used in the proof of Step 2, we will reach to a contradiction, which shows that $u_0\not\equiv 0.$ Letting $k\rightarrow \infty$ in relation (\ref{Z3}), we deduce that 
\begin{eqnarray*}
\int_{\Omega}|\nabla u_0|^2~dx+\int_{\Omega}|\nabla u_0|^p~dx\leq\lambda \int_{\Omega}u_0^2~dx.
\end{eqnarray*}
If there is equality in the above relation then $u_0\in\mathcal{N}_{\lambda}$ and $m\leq J_{\lambda}(u_0)$. Assume by contradiction that
\begin{equation}\label{Z4}
\int_{\Omega}|\nabla u_0|^2~dx+\int_{\Omega}|\nabla u_0|^p~dx<\lambda \int_{\Omega}u^2_0~dx.
\end{equation} 
Let $t>0$ be such that $tu_0\in\mathcal{N}_{\lambda},$ i.e.,$$t=\left(\frac{\lambda\displaystyle{\int_{\Omega}}u_0^2~dx-
\displaystyle{\int_{\Omega}}|\nabla u_0|^2~dx}{\displaystyle{\int_{\Omega}}|\nabla u_0|^p~dx}\right)^{\frac{1}{p-2}}.$$
We note that $t\in(0,1)$ since $1<t^{p-2}$ (thanks to (\ref{Z4})). Finally, since $tu_0\in \mathcal{N}_{\lambda}$ with $t\in(0,1)$ we have 
\begin{eqnarray*}
0<m\leq J_{\lambda}(tu_0)&=&\left(\frac{2}{p}-1\right)\int_{\Omega}|\nabla(tu_0)|^p~dx=t^p\left(\frac{2}{p}-1\right)\int_{\Omega}|\nabla u_0|^p~dx\\
&=&t^p J_{\lambda}(u_0)\\
&\leq & t^p\lim_{k\rightarrow\infty}\inf J_{\lambda}(u_k)=t^p m<m~\text{for $t\in(0,1)$,}
\end{eqnarray*}
and this is a contradiction which assures that relation (\ref{Z4}) cannot hold and consequently we have $u_0\in \mathcal{N}_{\lambda}$. Hence $m\leq J_{\lambda}(u_0)$ and $ m= J_{\lambda}(u_0)$.
\item[Step 4.] We conclude the proof of the claim.
Let $u\in \mathcal{N}_{\lambda}$ be such that $J_{\lambda}(u)=m$ (thanks to Step 3). Since $u\in \mathcal{N}_{\lambda}$, we have \begin{eqnarray*}
\int_{\Omega}|\nabla u|^2~dx+\int_{\Omega}|\nabla u|^p~dx=\lambda \int_{\Omega}u^2~dx,
\end{eqnarray*} 
and $$\int_{\Omega}|\nabla u|^2~dx<\lambda \int_{\Omega}u^2~dx.$$
Let $v\in\partial B_1(0)\subset W_0^{1,2}(\Omega)$ and $\varepsilon>0$ be very small such that $u+\delta v\ne 0$ in $\Omega$ for all $\delta\in(-\varepsilon,\varepsilon)$ and $$\int_{\Omega}|\nabla(u+\delta v)|^2~dx<\lambda\int_{\Omega}(u+\delta v)^2~dx ;$$ this is equivalent to 
$$\lambda\int_{\Omega}u^2~dx-\int_{\Omega}|\nabla u|^2~dx>\delta\left(2\int_{\Omega}\nabla u\cdot\nabla v~dx-
2\lambda\int_{\Omega}u v~dx\right)+\delta^2\left(\int_{\Omega}|\nabla v|^2~dx-\lambda\int_{\Omega}v^2~dx\right),$$ which holds true for $\delta$ small enough since the left hand side is positive while the function $$h(\delta):=|\delta|\left|2\int_{\Omega}\nabla u\cdot\nabla v~dx-
2\lambda\int_{\Omega}u v~dx\right|+\delta^2\left|\int_{\Omega}|\nabla v|^2~dx-\lambda\int_{\Omega}v^2~dx\right|$$ dominates the term from the right hand side and $h(\delta)$ is a continuous function (polynomial in $\delta$) which vanishes in $\delta=0$. For each $\delta\in(-\varepsilon,\varepsilon),$ let $t(\delta)>0$ be given by 
$$t(\delta)=\left(\frac{\lambda\displaystyle{\int_{\Omega}}(u+\delta v)^2~dx-
\int_{\Omega}|\nabla(u+\delta v)|^2~dx}{\displaystyle{\int_{\Omega}}|\nabla(u+\delta v)|^p~dx}\right)^{\frac{1}{p-2}},$$ so that $t(\delta)\cdot(u+\delta v)\in \mathcal{N}_{\lambda}.$ We have that $t(\delta)$ is of class $C^1(-\varepsilon,\varepsilon)$ since $t(\delta)$ is the composition of some functions of class $C^1$. On the other hand, since $u\in \mathcal{N}_{\lambda}$ we have $t(0)=1.$

Define $\iota:(-\varepsilon,\varepsilon)\rightarrow\mathbb{R}$~~by $\iota(\delta)=J_{\lambda}(t(\delta)(u+\delta v))$ which is of class $C^1(-\varepsilon,\varepsilon)$ and has a minimum at $\delta=0.$ We have 
$$\iota'(\delta)=[t'(\delta)(u+\delta v)+v t(\delta)]J'_{\lambda}(t(\delta)(u+\delta v))\Rightarrow$$ 
$$0=\iota'(0)=J'_{\lambda}(t(0)(u))[t'(0)u+v t(0)]=\langle J'_{\lambda}(u),v\rangle$$ since $t(0)=1$ and $t'(0)=0.$\\ This shows that every $\lambda\in(\lambda^D_1,\infty)$ is an eigenvalue of problem (\ref{e3}). 
\end{enumerate}
\end{proof}
\noindent In the next theorem we consider the case $p>2.$ For similar results for the Neumann case, [see, \cite{mi}].
\begin{theorem}\label{th2} \textup{For $p>2$, the set of first eigenvalues of problem (\ref{e3}) is given by $(\lambda^D_1,\infty).$}
\end{theorem}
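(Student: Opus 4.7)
The plan is to exploit that $p>2$ makes the energy functional coercive on the natural space, so unlike the $1<p<2$ case no Nehari reduction is needed: a direct minimization in the spirit of \cite{ma} should work. I would work with the same energy functional as before,
$$J_\lambda(u) := \int_\Omega |\nabla u|^2\,dx + \frac{2}{p}\int_\Omega |\nabla u|^p\,dx - \lambda\int_\Omega u^2\,dx,$$
now defined on $W^{1,p}_0(\Omega)$. Its nontrivial critical points are exactly the eigenfunctions of (\ref{e3}) associated to $\lambda$, so it suffices to produce a global minimizer of negative energy, and then combine with Remark \ref{rm1} and Lemma \ref{l1} which already rule out $\lambda\le \lambda_1^D$.

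First I would check coercivity of $J_\lambda$ on $W^{1,p}_0(\Omega)$. Since $\Omega$ is bounded, H\"older's inequality gives $\|\nabla u\|_2 \leq |\Omega|^{1/2-1/p}\|\nabla u\|_p$, and then Poincar\'e (\ref{pc}) in $L^2$ yields $\|u\|_2 \leq C\|u\|_{1,p}$ for a constant $C=C(\Omega,p)$. Dropping the nonnegative $\int|\nabla u|^2$ term,
$$J_\lambda(u) \;\geq\; \tfrac{2}{p}\|u\|_{1,p}^p - \lambda C^2 \|u\|_{1,p}^2 \;\longrightarrow\; +\infty \quad\text{as } \|u\|_{1,p}\to\infty,$$
because $p>2$. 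Hence every minimizing sequence is bounded in $W^{1,p}_0(\Omega)$.

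Next I would show $\inf J_\lambda <0$ by testing against $\rho e_1$, where $e_1$ is the $L^2$-normalized first Dirichlet eigenfunction of $-\Delta$ (which is smooth, hence in $W^{1,p}_0(\Omega)$):
$$J_\lambda(\rho e_1) = \rho^2(\lambda_1^D-\lambda) + \tfrac{2}{p}\rho^p\|e_1\|_{1,p}^p,$$
which is strictly negative for sufficiently small $\rho>0$ since $\lambda>\lambda_1^D$ and $p>2$. The direct method then produces a minimizer $u_0\in W^{1,p}_0(\Omega)\setminus\{0\}$: both gradient terms are convex and continuous, hence weakly lower semicontinuous on $W^{1,p}_0(\Omega)$ (for the $\int|\nabla u|^2$ piece this uses the continuous inclusion $W^{1,p}_0(\Omega)\hookrightarrow W^{1,2}_0(\Omega)$), while $\int u^2$ is weakly continuous by the Rellich--Kondrachov compact embedding $W^{1,p}_0(\Omega)\hookrightarrow L^2(\Omega)$. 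Nontriviality of the limit follows from the previous step.

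Finally, since $|\nabla|u_0||=|\nabla u_0|$ a.e.\ we have $J_\lambda(|u_0|)=J_\lambda(u_0)$, so I may assume $u_0\geq 0$. Being a critical point of $J_\lambda$, $u_0$ satisfies (\ref{v1}) and is therefore an eigenfunction; the $L^\infty$/$C^{1,\alpha}$ regularity and strong maximum principle established in section \ref{S3} then upgrade $u_0\geq 0$ to $u_0>0$ in $\Omega$, so $\lambda$ is a first eigenvalue. I do not expect a real obstacle here: in sharp contrast to Theorem \ref{th1}, coercivity comes for free and no Nehari-manifold analysis is required. The only mildly delicate check is the weak lower semicontinuity of the $W^{1,2}$-term on the weaker space $W^{1,p}_0(\Omega)$, but this is immediate from convexity together with the continuous embedding $W^{1,p}_0(\Omega)\hookrightarrow W^{1,2}_0(\Omega)$.
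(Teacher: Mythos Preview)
Your proposal is correct and follows essentially the same approach as the paper: the paper also proves Theorem~\ref{th2} by combining Lemma~\ref{l1} with a direct minimization of the energy functional $F_\lambda=\tfrac12 J_\lambda$ on $W^{1,p}_0(\Omega)$, establishing coercivity via H\"older and Poincar\'e, negativity of the infimum by testing against $re_1$, and positivity of the minimizer via $|\nabla|u||=|\nabla u|$ together with the Harnack inequality (Proposition~\ref{N2}). Your treatment of weak lower semicontinuity is in fact slightly more careful than the paper's, which appeals to convexity of the full functional even though the $-\lambda\int u^2$ term is concave.
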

\noindent The proof of Theorem \ref{th2} will follow as a direct consequence of the lemmas proved below:
\begin{lemma}
\textup{Let $$\lambda_1(p):=\inf_{u\in W^{1,p}_0
\backslash\{0\}}\left\{\frac{\frac{1}{p}\displaystyle{\int_{\Omega}}|\nabla u|^p~dx+\frac{1}{2}\int_{\Omega}|\nabla u|^2~dx}{\frac{1}{2}\displaystyle{\int_{\Omega}}u^2~dx}\right\}.$$ Then $\lambda_1(p)=\lambda^D_1,$ for all $p>2.$}
\end{lemma}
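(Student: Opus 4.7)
The plan is to prove both inequalities $\lambda_1(p)\leq\lambda_1^D$ and $\lambda_1(p)\geq\lambda_1^D$ by elementary scaling and Rayleigh-quotient arguments.

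First I would handle the lower bound $\lambda_1(p)\geq \lambda_1^D$. Since $p>2$ and $\Omega$ is bounded, H\"older's inequality gives the continuous embedding $W^{1,p}_0(\Omega)\hookrightarrow W^{1,2}_0(\Omega)$, so any admissible $u$ in the Rayleigh quotient defining $\lambda_1(p)$ also lies in $W^{1,2}_0(\Omega)$. Dropping the nonnegative $p$-term in the numerator and invoking the variational characterization
$$\lambda_1^D=\inf_{u\in W^{1,2}_0(\Omega)\setminus\{0\}}\frac{\int_\Omega|\nabla u|^2\,dx}{\int_\Omega u^2\,dx}$$
immediately yields
$$\frac{\tfrac{1}{p}\int_\Omega|\nabla u|^p\,dx+\tfrac12\int_\Omega|\nabla u|^2\,dx}{\tfrac12\int_\Omega u^2\,dx}\ \geq\ \frac{\int_\Omega|\nabla u|^2\,dx}{\int_\Omega u^2\,dx}\ \geq\ \lambda_1^D,$$
giving $\lambda_1(p)\geq\lambda_1^D$ after taking the infimum.

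Next I would establish the matching upper bound $\lambda_1(p)\leq\lambda_1^D$ by a scaling test. Let $e_1$ be a first Dirichlet eigenfunction of $-\Delta$; by standard elliptic regularity on a smooth bounded domain $e_1\in C^\infty(\overline\Omega)$ and vanishes on $\partial\Omega$, so $e_1\in W^{1,p}_0(\Omega)$ for every $p$. Plugging $t\,e_1$ with $t>0$ into the quotient and using $\int_\Omega|\nabla e_1|^2\,dx=\lambda_1^D\int_\Omega e_1^2\,dx$,
$$\frac{\tfrac{1}{p}\int_\Omega|\nabla(te_1)|^p\,dx+\tfrac12\int_\Omega|\nabla(te_1)|^2\,dx}{\tfrac12\int_\Omega(te_1)^2\,dx}\ =\ \frac{2\,t^{p-2}}{p}\cdot\frac{\int_\Omega|\nabla e_1|^p\,dx}{\int_\Omega e_1^2\,dx}+\lambda_1^D.$$
Since $p>2$, letting $t\to 0^+$ makes the first term vanish, so the quotient can be made arbitrarily close to $\lambda_1^D$. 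Taking the infimum over $t>0$ (and a fortiori over all of $W^{1,p}_0\setminus\{0\}$) yields $\lambda_1(p)\leq\lambda_1^D$.

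Combining the two inequalities gives $\lambda_1(p)=\lambda_1^D$. There is no serious obstacle here; the only points requiring care are the embedding $W^{1,p}_0(\Omega)\subset W^{1,2}_0(\Omega)$ used in the lower bound (which is exactly where the hypothesis $p>2$ enters, together with boundedness of $\Omega$) and the admissibility $e_1\in W^{1,p}_0(\Omega)$ in the upper bound, both of which are immediate. It is worth noting that the infimum is \emph{not} attained: the scaling exponent $t^{p-2}$ shows that the $p$-term is a genuine perturbation that can only be killed in the limit $t\to 0$, consistent with Lemma \ref{l1} which asserts that $\lambda_1^D$ itself is never an eigenvalue of \eqref{e3}.
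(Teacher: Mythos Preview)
Your proof is correct and follows essentially the same approach as the paper: the lower bound comes from dropping the nonnegative $p$-term and invoking the Rayleigh quotient for $\lambda_1^D$, and the upper bound comes from testing with $t e_1$ and letting $t\to 0^+$ (the paper uses the sequence $u_n=\tfrac{1}{n}e_1$, which is the same scaling). Your write-up is in fact more careful than the paper's in making explicit the embedding $W^{1,p}_0(\Omega)\subset W^{1,2}_0(\Omega)$ and the admissibility $e_1\in W^{1,p}_0(\Omega)$.
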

\begin{proof}
We clearly have $\lambda_1(p)\geq \lambda^D_1$ since a positive term is added. On the other hand, consider $u_n=\frac{1}{n} e_1$ (where $e_1$ is the first eigenfunction of $-\Delta$), we get
$$\lambda_1(p)\leq \frac{\frac{1}{2 n^2}\int_{\Omega}|\nabla e_1|^2dx+\frac{1}{p n^p}\int_{\Omega}|\nabla e_1|^p dx}{\frac{1}{2 n^2}\int_{\Omega}|e_1|^2dx}\rightarrow \lambda^D_1~~\text{as $n\rightarrow \infty$}.$$
\end{proof}
\begin{lemma}\label{l3}
\textup{ For each $\lambda>0$, we have
 $$\lim_{\|u\|_{1,p}\rightarrow\infty}\left(\frac{1}{2}\int_{\Omega}|\nabla u|^2~dx+\frac{1}{p}\int_{\Omega}|\nabla u|^p~dx-\frac{\lambda}{2}\int_{\Omega}u^2~dx\right)=\infty.$$}
\end{lemma}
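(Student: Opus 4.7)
The plan is to exploit the fact that, for $p>2$, the $p$-Dirichlet energy dominates at infinity both the $L^2$-Dirichlet energy and the $L^2$-mass of $u$, because $W^{1,p}_0(\Omega) \embed L^2(\Omega)$ for a bounded domain $\Omega$. So the strategy is to bound the negative term $\tfrac{\lambda}{2}\int_\Omega u^2\,dx$ by a power of $\|u\|_{1,p}$ strictly smaller than $p$, and then discard the nonnegative $\tfrac{1}{2}\int_\Omega |\nabla u|^2\,dx$ term as harmless.

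First I would invoke Hölder's inequality on the bounded domain $\Omega$: since $p>2$,
\[
\int_\Omega u^2\,dx \leq |\Omega|^{(p-2)/p}\left(\int_\Omega |u|^p\,dx\right)^{2/p}.
\]
Then apply the Poincaré inequality (\ref{pc}) to get
\[
\int_\Omega |u|^p\,dx \leq C_p(\Omega)\int_\Omega |\nabla u|^p\,dx = C_p(\Omega)\,\|u\|_{1,p}^{\,p}.
\]
Combining the two yields a constant $K=K(\Omega,p)>0$ such that
\[
\int_\Omega u^2\,dx \leq K\,\|u\|_{1,p}^{\,2}.
\]

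Next I would drop the nonnegative term $\tfrac{1}{2}\int_\Omega |\nabla u|^2\,dx$ and insert the bound above:
\[
\frac{1}{2}\int_\Omega |\nabla u|^2\,dx + \frac{1}{p}\int_\Omega |\nabla u|^p\,dx - \frac{\lambda}{2}\int_\Omega u^2\,dx \;\geq\; \frac{1}{p}\,\|u\|_{1,p}^{\,p} - \frac{\lambda K}{2}\,\|u\|_{1,p}^{\,2}.
\]
Since $p>2$, the function $t\mapsto \tfrac{1}{p}\,t^p - \tfrac{\lambda K}{2}\,t^2$ tends to $+\infty$ as $t\to\infty$, which gives the claim.

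I do not anticipate a real obstacle: the only delicate point is that the inequality $\|u\|_2 \leq C\|u\|_{1,p}$ relies crucially on $p>2$ (via the Hölder step), precisely the regime of this lemma. Everything else is an elementary comparison of powers. Note that this coercivity is exactly what allows the direct method of the calculus of variations to apply for $p>2$, in contrast with the case $p\in(1,2)$ handled in Theorem \ref{th1}, where the Nehari manifold had to be introduced.
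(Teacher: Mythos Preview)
Your proof is correct and follows essentially the same route as the paper: bound $\int_\Omega u^2\,dx$ by a constant times $\|u\|_{1,p}^2$, drop the nonnegative $\tfrac{1}{2}\int_\Omega|\nabla u|^2\,dx$, and compare powers. The only cosmetic difference is that the paper applies Poincar\'e (with exponent $2$) before H\"older on the gradient, whereas you apply H\"older on $u$ before Poincar\'e (with exponent $p$); both simply express the continuous embedding $W^{1,p}_0(\Omega)\hookrightarrow L^2(\Omega)$ for $p>2$.
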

\begin{proof}
Clearly
 \begin{eqnarray*} 
 \frac{1}{p}\int_{\Omega}|\nabla u|^p~dx+\frac{1}{2}\int_{\Omega}|\nabla u|^2~dx &\geq& \frac{1}{p}\int_{\Omega}|\nabla u|^p~dx.\\
 \end{eqnarray*} 
 On the one hand, using Poincar\'e's inequality with $p=2$,\\ we have $\displaystyle{\int_{\Omega}}u^2~dx\leq C_2(\Omega)\displaystyle{\int_{\Omega}}|\nabla u|^2~dx, \forall u\in W^{1,p}_0(\Omega)\subset W^{1,2}_0(\Omega)$ and then applying the H\"older inequality to the right hand side term of the previous estimate, we obtain
 \begin{eqnarray*}
 \int_{\Omega}|\nabla u|^2~dx 
 \leq  |\Omega|^{\frac{p-2}{p}}\|u\|^2_{1,p},
 \end{eqnarray*}
 so $\displaystyle{\int_{\Omega}}u^2~dx\leq D \|u\|^2_{1,p},$ where $D=C_2(\Omega) |\Omega|^{\frac{p-2}{p}}.$ Therefore for $\lambda>0,$
 \begin{equation}\label{est}
 \frac{1}{2}\int_{\Omega}|\nabla u|^2~dx+\frac{1}{p}\int_{\Omega}|\nabla u|^p~dx-\frac{\lambda}{2}\int_{\Omega}u^2~dx\geq C\|u\|^p_{1,p}-\frac{\lambda}{2} D \|u\|^2_{1,p},
 \end{equation}
and the the right-hand side of (\ref{est}) tends to $\infty$, as $\|u\|_{1,p}\rightarrow\infty$, since $p>2.$
 \end{proof}
 \begin{lemma}
 
 \textup{Every $\lambda\in (\lambda^D_1,\infty)$ is a first eigenvalue of problem (\ref{e3}).}
 \end{lemma}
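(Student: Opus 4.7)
The plan is to apply the direct method in the calculus of variations to the functional
\[
J_\lambda(u)=\frac{1}{2}\int_{\Omega}|\nabla u|^2\,dx+\frac{1}{p}\int_{\Omega}|\nabla u|^p\,dx-\frac{\lambda}{2}\int_{\Omega}u^2\,dx,
\]
defined on $X:=W^{1,p}_0(\Omega)$ (which embeds into $W^{1,2}_0(\Omega)$ since $p>2$ and $\Omega$ is bounded). Critical points of $J_\lambda$ on $X$ are exactly the weak eigenfunctions of (\ref{e3}) for the given $\lambda$, via the formulation (\ref{v1}). So it suffices to produce a nontrivial critical point, and then to argue that the corresponding eigenfunction can be chosen to be positive.

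First I would check the hypotheses of the direct method. Coercivity of $J_\lambda$ on $X$ is exactly the statement of Lemma \ref{l3}. Weak lower semicontinuity follows because the two gradient terms are continuous and convex in $\nabla u$ (hence weakly lsc), while the $L^2$ term is weakly continuous by the compact embedding $W^{1,p}_0(\Omega)\embed L^2(\Omega)$. Therefore $J_\lambda$ attains its infimum at some $u_0\in X$.

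The main obstacle is to show that this minimizer is nontrivial, i.e.\ that $\inf_X J_\lambda<0$; this is the point where the condition $\lambda>\lambda_1^D$ (together with $p>2$) enters. I would test with $u=te_1$, where $e_1$ is the first Dirichlet eigenfunction of $-\Delta$. A direct computation gives
\[
J_\lambda(te_1)=\frac{t^2}{2}\bigl(\lambda_1^D-\lambda\bigr)\int_{\Omega}e_1^2\,dx+\frac{t^p}{p}\int_{\Omega}|\nabla e_1|^p\,dx.
\]
Since $\lambda-\lambda_1^D>0$ and $p>2$, the quadratic term dominates the $t^p$ term as $t\to 0^+$, so $J_\lambda(te_1)<0$ for $t>0$ sufficiently small. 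Hence $J_\lambda(u_0)<0=J_\lambda(0)$, so $u_0\not\equiv 0$. The Euler--Lagrange equation $\langle J'_\lambda(u_0),v\rangle=0$ for all $v\in X$ reads precisely as (\ref{v1}), showing that $\lambda$ is an eigenvalue with eigenfunction $u_0$.

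Finally, to upgrade $\lambda$ to a \emph{first} eigenvalue in the sense of the paper, I would observe that $J_\lambda(|u|)=J_\lambda(u)$ for every $u\in X$, so $|u_0|$ is again a (nonnegative) minimizer and hence a nonnegative weak solution of $-\Delta_p w-\Delta w=\lambda w$ in $\Omega$ with $\lambda>0$. Standard regularity for the $(p,2)$-Laplacian gives $|u_0|\in C^{1,\alpha}(\overline\Omega)$, and the strong maximum principle for this quasi-linear operator (cf. V\'azquez) yields $|u_0|>0$ in $\Omega$. Thus the corresponding eigenfunction is strictly of one sign, which concludes the proof and (together with the previous lemma) establishes Theorem \ref{th2}.
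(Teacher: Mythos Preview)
Your argument is correct and follows essentially the same route as the paper: define the energy functional on $W^{1,p}_0(\Omega)$, obtain coercivity from Lemma~\ref{l3}, use weak lower semicontinuity to get a global minimizer, and show nontriviality by testing with $te_1$ for small $t>0$. Your justification of weak lower semicontinuity (convexity of the gradient terms plus compactness for the $L^2$ term) is in fact more precise than the paper's, and you also include the sign argument $J_\lambda(|u|)=J_\lambda(u)$ which the paper postpones to Proposition~\ref{N2}.
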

 \begin{proof}
 For each $\lambda>\lambda^D_1$ define $F_{\lambda}: W^{1,p}_0(\Omega)\rightarrow \mathbb{R}$ by $$F_{\lambda}(u)=\frac{1}{2}\int_{\Omega}|\nabla u|^2~dx+\frac{1}{p}\int_{\Omega}|\nabla u|^p~dx-\frac{\lambda}{2}\int_{\Omega}u^2~dx~~,\forall u\in W^{1,p}_0(\Omega).$$ Standard arguments shows that $F_{\lambda}\in C^1(W^{1,p}_0(\Omega),\mathbb{R})$ [see, \cite{ZXA}] with its derivative given by $$\langle F'_{\lambda}(u),\phi\rangle=\int_{\Omega}(|\nabla u|^{p-2}+1)\nabla u\cdot \nabla\phi~dx-\lambda\int_{\Omega}u\phi~dx,$$ for all $u,\phi\in W^{1,p}_0(\Omega).$ Estimate (\ref{est}) shows that $F_{\lambda}$ is coercive in $W^{1,p}_0(\Omega).$ On the other hand, $F_{\lambda}$ is also weakly lower semi-continuous on $W^{1,p}_0(\Omega)$ since $F_{\lambda}$ is a continuous convex functional, (see  \cite[Proposition 1.5.10 and Theorem 1.5.3]{ME}) . Then we can apply a calculus of variations result, in order to obtain the existence of a global minimum point of $F_{\lambda},$ denoted by $\theta_{\lambda},$ i.e., $F_{\lambda}(\theta_{\lambda})=\displaystyle{\min_{W^{1,p}_0(\Omega)}}F_{\lambda}.$
 Note that for any $\lambda> \lambda^D_1$ there exists $u_{\lambda}\in W^{1,p}_0(\Omega)$ such that $F_{\lambda}(u_{\lambda})< 0$ . Indeed, taking $u_{\lambda}=re_1,$ we have $$F_{\lambda}(re_1)=\frac{r^2}{2}(\lambda_1^D-\lambda)+\frac{r^p}{p}\int_{\Omega}|\nabla e_1|^p~dx<0~~~\text{for}~~r>0~~\text{small}.$$
But then $F_{\lambda}(\theta_{\lambda})\leq F_{\lambda}(u_{\lambda})< 0$, which means that $\theta_{\lambda}\in W^{1,p}_0(\Omega)\backslash\{0\}.$ 
On the other hand, we have $\langle F'_{\lambda}(\theta_{\lambda}),\phi\rangle=0,\forall\phi\in W^{1,p}_0(\Omega)$ ($\theta_{\lambda}$ is a critical point of $F_{\lambda}$) with $\theta_{\lambda}\in W^{1,p}_0(\Omega)\backslash\{0\}\subset W^{1,2}_0(\Omega) \backslash\{0\}.$ Consequently each $\lambda> \lambda^D_1$ is an eigenvalue of problem (\ref{e3}).
\end{proof}
\begin{proposition}\label{N2}
The first eigenfunctions $u^{\lambda}_1$ associated to $\lambda\in (\lambda^D_1,\infty)$
are positive or negative in $\Omega.$
\end{proposition}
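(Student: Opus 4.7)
My plan is to exploit the invariance of the variational functionals under $u \mapsto |u|$ together with a strong maximum principle for $-\Delta_p - \Delta$. The idea is to replace the eigenfunction by its absolute value --- still a minimizer, hence still an eigenfunction --- and then argue that this nonnegative eigenfunction is strictly positive in $\Omega$, which forces the original $u$ to have constant sign.

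First I would let $u = u_1^\lambda$ denote the first eigenfunction produced by Theorem \ref{th1} when $1<p<2$ (a minimizer of $J_\lambda$ on the Nehari manifold $\mathcal{N}_\lambda$) or by Theorem \ref{th2} when $p>2$ (a global minimizer of $F_\lambda$ on $W^{1,p}_0(\Omega)$). The standard fact that $|u| \in W^{1,r}_0(\Omega)$ whenever $u \in W^{1,r}_0(\Omega)$, together with $\bigl|\nabla |u|\bigr| = |\nabla u|$ almost everywhere, shows that $|u|$ lies in the same admissible class as $u$: for $1<p<2$ the Nehari identity depends only on the quantities $\int_\Omega|\nabla u|^2\,dx$, $\int_\Omega|\nabla u|^p\,dx$ and $\int_\Omega u^2\,dx$, all unchanged by $u \mapsto |u|$, so $|u|\in\mathcal{N}_\lambda$; for $p>2$ we have $|u|\in W^{1,p}_0(\Omega)$ trivially. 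In both cases
\[
J_\lambda(|u|) = J_\lambda(u), \qquad F_\lambda(|u|) = F_\lambda(u),
\]
so $|u|$ is also a minimizer and therefore a nonnegative, nontrivial weak solution of \eqref{e3}.

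Next, setting $w := |u|$, I would rewrite the equation in divergence form
\[
-\div\!\bigl((1+|\nabla w|^{p-2})\nabla w\bigr) = \lambda w \ge 0 \quad \text{in } \Omega,
\]
and invoke V\'azquez's strong maximum principle for quasi-linear operators of this type, using $w \ge 0$, $w \not\equiv 0$ and $\lambda > 0$. This will give $w > 0$ throughout $\Omega$; since $|u|>0$ pointwise in $\Omega$, the eigenfunction $u$ cannot change sign, and hence $u>0$ or $u<0$ in $\Omega$. The delicate step is the applicability of the strong maximum principle to $-\Delta_p - \Delta$: for $p>2$ the $p$-Laplace part is degenerate but the $-\Delta$ term is uniformly elliptic and dominates where $\nabla w$ vanishes, whereas for $1<p<2$ the $p$-Laplace part is singular at $\nabla w = 0$, which is precisely the framework covered by V\'azquez's theorem. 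The regularity (continuity of $w$ in $\overline{\Omega}$) needed to give these pointwise statements meaning is furnished by the $L^\infty$ and $C^{1,\alpha}$ smoothness established in Section \ref{S3}; once this ingredient is secured, the positivity or negativity of $u_1^\lambda$ follows immediately.
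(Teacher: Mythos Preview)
Your proof is correct and follows essentially the same approach as the paper: replace $u$ by $|u|$, observe that $|u|$ is again a minimizer (hence a nonnegative eigenfunction) because all the relevant integrals depend only on $|\nabla u|$ and $|u|$, and then apply a positivity principle to conclude $|u|>0$ in $\Omega$. The only cosmetic difference is that the paper invokes the Harnack inequality (citing Evans) in place of V\'azquez's strong maximum principle; both tools yield the same conclusion here.
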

\begin{proof}
Let $u^{\lambda}_1\in W^{1,p}_0(\Omega)\backslash\{0\}$ be an eigenfunction associated to $\lambda\in (\lambda^D_1,\infty)$, then \\$\displaystyle{\int_{\Omega}|\nabla u^{\lambda}_1|^p~dx+\displaystyle{\int_{\Omega}}|\nabla u^{\lambda}_1|^2~dx=\lambda\int_{\Omega}|u^{\lambda}_1|^2~dx},$ which means $u^{\lambda}_1$ achieves the infimum in the definition of $\lambda^D_1$. On the other hand we have $\|\nabla|u^{\lambda}_1|\|_{1,p}=\|\nabla u^{\lambda}_1 \|_{1,p}$ and $\|\nabla|u^{\lambda}_1|\|_{1,2}=\|\nabla u^{\lambda}_1 \|_{1,2}$, since $|\nabla |u^{\lambda}_1||=|\nabla u^{\lambda}_1|$ almost everywhere. It follows that $|u^{\lambda}_1|$ achieves also the infimum in the definition of $\lambda^D_1$, and therefore by the Harnack inequality [see, \cite{LE}], we have $|u^{\lambda}_1(x)|> 0$ $\forall x\in\Omega$ and consequently $u^{\lambda}_1$ is either positive or negative in $\Omega$.
\end{proof}
\noindent A similar result of Theorem \ref{T1} was proved in \cite{A} in the case of the $p$-Laplacian.
\section{Properties of eigenfunctions and the operator $-\Delta_p-\Delta$}\label{S3}
\subsection{Boundedness of the eigenfunctions}
We shall prove boundedness of eigenfunctions and use this fact to obtain $C^{1,\alpha}$ smoothness of all eigenfunctions of the quasi-linear problem (\ref{e3}). The latter result is due to \cite[Theorem 4.4]{A}, which originates from \cite{E} and \cite{P}.
\begin{theorem}\label{T1}
\textup{Let $(u,\lambda)\in W^{1,p}_0(\Omega)\times \mathbb{R}^{\star}_+$ be an eigensolution of the weak formulation (\ref{v1}). Then $u\in L^{\infty}(\Omega).$}
\end{theorem}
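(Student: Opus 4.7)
\emph{Proof plan.} The argument is a Moser iteration; crucially, the $-\Delta$ piece of the operator furnishes the $L^{2}$-based energy estimate that drives the scheme, so the iteration works uniformly in $p$.

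For $\beta\geq 0$ and $M>0$ set $u_M:=\min(|u|,M)$ and test (\ref{v1}) with $v:=u\, u_M^{2\beta}$, which is admissible since $u_M$ is bounded. A case analysis over $\{|u|<M\}$ and $\{|u|\geq M\}$ gives $\int_\Omega\nabla u\cdot\nabla v\, dx\geq \int_\Omega|\nabla u|^2 u_M^{2\beta}\, dx$ and shows that $\int_\Omega|\nabla u|^{p-2}\nabla u\cdot\nabla v\, dx\geq 0$. Dropping the non-negative $p$-Laplacian term in (\ref{v1}) then yields
\[
\int_\Omega|\nabla u|^2 u_M^{2\beta}\, dx\;\leq\;\lambda\int_\Omega u^2 u_M^{2\beta}\, dx.
\]
Setting $w:=u\, u_M^{\beta}\in W^{1,2}_0(\Omega)$ and observing that $|\nabla w|^2\leq(\beta+1)^2 u_M^{2\beta}|\nabla u|^2$ almost everywhere, the Sobolev embedding $W^{1,2}_0\hookrightarrow L^{2^*}$ (with $2^*=2N/(N-2)$ if $N\geq 3$; any fixed $q_*\in(2,\infty)$ if $N=2$) combined with the previous bound gives
\[
\Bigl(\int_\Omega|u|^{2^*}u_M^{2^*\beta}\, dx\Bigr)^{2/2^*}\;\leq\;C_S(\beta+1)^2\lambda\int_\Omega u^2 u_M^{2\beta}\, dx.
\]
Letting $M\to\infty$ by monotone convergence and extracting $2(\beta+1)$-th roots produces
\[
\|u\|_{2^*(\beta+1)}\;\leq\;\bigl[C_S(\beta+1)^2\lambda\bigr]^{1/(2(\beta+1))}\|u\|_{2(\beta+1)}.
\]

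One now iterates with $\beta_0:=0$ and $\beta_{k+1}+1:=(2^*/2)(\beta_k+1)$, so that $\beta_k+1=(2^*/2)^k$. Beginning from $\|u\|_2<\infty$, induction yields
\[
\|u\|_{2(\beta_{k+1}+1)}\;\leq\;\|u\|_2\prod_{j=0}^{k}\bigl[C_S(\beta_j+1)^2\lambda\bigr]^{1/(2(\beta_j+1))}.
\]
Because $1/(\beta_j+1)=(2/2^*)^j$ is geometric while the logarithms of the bases grow only like $j$, the infinite product converges, and passing $k\to\infty$ gives $u\in L^\infty(\Omega)$ with a quantitative bound depending on $\lambda,N,p,\Omega,\|u\|_2$.

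The only real obstacle is the careful bookkeeping of the chain-rule constants, needed to ensure they stay polynomial in $\beta$ so that the geometric Moser iteration closes. The case $1<p<2$ is handled identically since the eigenfunction is in $W^{1,2}_0(\Omega)$ by definition; for $p>2$ the embedding $W^{1,p}_0(\Omega)\subset W^{1,2}_0(\Omega)$ gives the same starting $L^2$-estimate.
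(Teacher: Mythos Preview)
Your argument is correct, and it is a Moser iteration like the paper's, but the two proofs make opposite choices about which piece of the operator to exploit. The paper keeps the $p$-Laplacian contribution, discards the Laplacian term, and runs the iteration through the embedding $W^{1,p}_0(\Omega)\hookrightarrow L^{p^*}(\Omega)$ with test function $\varphi=w_M^{kp+1}$ (after first disposing of $p>N$ via Morrey and reducing to positive $u$, then treating sign-changing $u$ by splitting into $u^\pm$). You do the reverse: you show the $p$-term is nonnegative, drop it, and iterate via $W^{1,2}_0(\Omega)\hookrightarrow L^{2^*}(\Omega)$ with test function $v=u\,u_M^{2\beta}$. Your route has two mild advantages: the resulting iteration constants do not depend on $p$, so the cases $1<p<2$ and $p>2$ are handled at once without the Morrey reduction; and working with $u_M=\min(|u|,M)$ and $v=u\,u_M^{2\beta}$ treats sign-changing eigenfunctions directly, avoiding the $u^{\pm}$ split. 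The paper's approach, on the other hand, is the direct adaptation of the classical $p$-Laplacian argument and would survive if the $-\Delta$ term were absent. Both lead to the same conclusion with comparable effort.
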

\begin{proof}
By Morrey's embedding theorem it suffices to consider the case $p\leq N.$ Let us assume first that $u>0.$ For $M\geq 0$ define $w_M(x)=\min\{u(x),M\}$. Letting  
\begin{equation}
g(x)=
\left\{
\begin{array}{l}
x~~\text{if~$x\leq M$}\\
M~~\text{if~$x>M$}
\end{array}
\right.
\end{equation}
we have $g\in C(\mathbb{R})$ piecewise smooth function with $g(0)=0.$ Since $u\in W^{1,p}_0(\Omega)$ and $g'\in L^{\infty}(\Omega),$ then $g\circ u\in  W^{1,p}_0(\Omega)$ and $w_M\in W^{1,p}_0(\Omega)\cap L^{\infty}(\Omega)$ (see, Theorem B.3 in \cite{A}). For $k>0,$ define $\varphi=w^{kp+1}_M$ then $\nabla\varphi=(kp+1)\nabla w_Mw^{kp}_M$ and $\varphi\in  W^{1,p}_0(\Omega)\cap L^{\infty}(\Omega).$\\ Using $\varphi$ as a test function in (\ref{v1}), one obtains $$(kp+1)\left[\int_{\Omega}|\nabla u|^{p-2}\nabla u\cdot \nabla w_M w^{kp}_M~dx+\int_{\Omega}\nabla u\cdot\nabla w_M w^{kp}_M~dx\right]=\lambda\int_{\Omega}u~
w^{kp+1}_M~dx.$$ On the other hand using the fact that $w^{kp+1}_M\leq u^{kp+1}$, it follows that 
$$(kp+1)\left[\int_{\Omega}|\nabla u|^{p-2}\nabla u\cdot \nabla w_M w^{kp}_M~dx+\int_{\Omega}\nabla u\cdot\nabla w_M w^{kp}_M~dx\right]\leq\lambda\int_{\Omega}|u|^{(k+1)p}~dx.$$ We have $\nabla(w^{k+1}_M)=(k+1)\nabla w_M w^k_M\Rightarrow |\nabla w^{k+1}_M|^p=(k+1)^pw^{kp}_M|\nabla w_M|^p.$  Since the integrals on the left are zero on $\{x : u(x)>M\}$ we can take $u=w_M$ in the previous inequality, and it follows that
$$(kp+1)\left[\int_{\Omega}|\nabla w_M|^p w^{kp}_M~dx+\int_{\Omega}|\nabla w_M|^2 w^{kp}_M~dx\right]\leq\lambda\int_{\Omega}|u|^{(k+1)p}~dx.$$ Replacing $|\nabla w_M|^p w^{kp}_M$~~by $\frac{1}{(k+1)^p}|\nabla w^{k+1}_M|^p$, we have $$\frac{kp+1}{(k+1)^p}\int_{\Omega}|\nabla w^{k+1}_M|^p~dx+(kp+1)\int_{\Omega}|\nabla w_M|^2 w^{kp}_M~dx\leq\lambda\int_{\Omega}|u|^{(k+1)p}~dx,$$ 
 
\noindent which implies that $$\frac{kp+1}{(k+1)^p}\int_{\Omega}|\nabla w^{k+1}_M|^p~dx\leq\lambda\int_{\Omega}|u|^{(k+1)p}~dx~$$ and  then 
\begin{equation}\label{ast}
\int_{\Omega}|\nabla w^{k+1}_M|^p~dx\leq\left(\lambda \frac{(k+1)^p}{kp+1}\right)\int_{\Omega}|u|^{(k+1)p}~dx.
\end{equation}

By Sobolev's embedding theorem, there is a constant $c_1>0$ such that 
\begin{equation}\label{sbe}
\|w^{k+1}_M\|_{p^{\star}}\leq c_1\|w^{k+1}_M\|_{1,p}.
\end{equation}
where $p^{\star}$ is the Sobolev critical exponent.
Consequently, we have
\begin{equation}\label{sb1}
 \|w_M\|_{(k+1)p^{\star}}\leq \|w^{k+1}_M\|^{\frac{1}{k+1}}_{p^{\star}},
\end{equation} 
and therefore 
\begin{equation}\label{sb2}
\|w_M\|_{(k+1)p^{\star}}\leq \left(c_1\|w^{k+1}_M\|_{1,p}\right)^{\frac{1}{k+1}}=c_1^{\frac{1}{k+1}}\|w^{k+1}_M\|^{\frac{1}{k+1}}_{1,p}.
\end{equation} 
But by $(\ref{ast})$,
\begin{equation}\label{o}
\|w^{k+1}_M\|_{1,p}\leq \left(\lambda \frac{(k+1)^p}{kp+1}\right)^{\frac{1}{p}}\|u\|^{k+1}_{(k+1)p}
\end{equation} 
and we note that we can find a constant $c_2>0$ such that \\$\left(\lambda \frac{(k+1)^p}{kp+1}\right)^{\frac{1}{p\sqrt{k+1}}}\leq c_2$, independently of $k$ and consequently
\begin{equation}\label{q}
 \|w_M\|_{(k+1)p^{\star}}\leq c_1^{\frac{1}{k+1}}c_2^{\frac{1}{\sqrt{k+1}}}\|u\|_{(k+1)p}.
 \end{equation}
  Letting $M\rightarrow\infty$, Fatou's lemma implies
 \begin{equation}\label{r} 
\|u\|_{(k+1)p^{\star}}\leq  c_1^{\frac{1}{k+1}}c_2^{\frac{1}{\sqrt{k+1}}}\|u\|_{(k+1)p}.
\end{equation}
 Choosing $k_1,$ such that $(k_1+1)p=p^{\star},$ then $\|u\|_{(k_1+1)p^{\star}}\leq  c_1^{\frac{1}{k_1+1}}c_2^{\frac{1}{\sqrt{k_1+1}}}\|u\|_{p^{\star}}$. Next we choose $k_2$ such that $(k_2+1)p=(k_1+1)p^{\star},$ then taking $k_2=k$ in inequality (\ref{r}), it follows that 
 \begin{equation}\label{zg1}
 \|u\|_{(k_2+1)p^{\star}}\leq  c_1^{\frac{1}{k_2+1}}c_2^{\frac{1}{\sqrt{k_2+1}}}\|u\|_{(k_1+1)p^{\star}}.
 \end{equation}
 By induction we obtain 
\begin{equation}\label{s}
\|u\|_{(k_n+1)p^{\star}}\leq  c_1^{\frac{1}{k_n+1}}c_2^{\frac{1}{\sqrt{k_n+1}}}\|u\|_{(k_{n-1}+1)p^{\star}},
\end{equation}
 where the sequence $\{k_n\}$ is chosen such that $(k_n+1)p=(k_{n-1}+1)p^{\star}, k_0=0.$ One gets $k_n+1=(\frac{p^{\star}}{p})^n.$ As $\frac{p}{p^{\star}}<1,$ there is $C>0$ (which depends on $c_1$ and $c_2$) such that for any $n=1,2,\cdots$
 \begin{equation}\label{che1}
 \|u\|_{r_n}\leq C\|u\|_{p^{\star}}
 \end{equation}
with $r_n=(k_n+1)p^{\star}\rightarrow\infty$ as $n\rightarrow\infty.$ We note that (\ref{che1}) follows by iterating the previous inequality (\ref{s}).
We will indirectly show that $u\in L^{\infty}(\Omega).$ Suppose $u\not\in L^{\infty}(\Omega),$ then there exists $\varepsilon>0$ and a set $A$ of positive measure in $\Omega$ such that 
 $|u(x)|>C \|u\|_{p^{\star}}+\varepsilon=K,$ for all $x\in A.$ We then have, 
 \begin{equation}\label{sbb1}
\lim_{n\rightarrow\infty}\inf\|u\|_{r_n}\geq \lim_{n\rightarrow\infty}\inf\left(\int_{A}
K^{r_n}\right)^{1/r_n}=\lim_{n\rightarrow
\infty}\inf K |A|^{1/r_n}=K>C \|u\|_{p^{\star}},
\end{equation}
which contradicts $(\ref{che1}).$
If $u$ changes sign, we consider $u=u^+ -u^-$ where
\begin{equation}\label{dec}
u^+=\max\{u,0\}~~\text{and}~~u^-=\max\{-u,0\}.
\end{equation}
 We have $u^+,u^-\in W^{1,p}_0(\Omega).$ For each $M>0$ define $w_M=\min\{u^+(x),M\}$ and take again $\varphi=w^{kp+1}_M$ as a test function in $(\ref{v1}).$ Proceeding the same way as above we conclude that $u^+\in L^{\infty}(\Omega)$. Similarly we have $u^-\in L^{\infty}(\Omega)$. Therefore $u=u^+ -u^-$ is in $L^{\infty}(\Omega).$
\end{proof}
\subsection{Simplicity of the eigenvalues}\label{S4}
\noindent We prove an auxiliary result which will imply uniqueness of the first eigenfunction.
\noindent Let 
\begin{eqnarray*}
I(u,v)&=&\langle -\Delta_p u,\frac{u^p-v^p}{u^{p-1}}\rangle+\langle -\Delta u,\frac{u^2-v^2}{u}\rangle\\&+&\langle -\Delta_p v,\frac{v^p-u^p}{v^{p-1}}\rangle+\langle -\Delta v,\frac{v^2-u^2}{v}\rangle,\nonumber
\end{eqnarray*}
for all $(u,v)\in D_I,$ where 
\begin{equation*}
D_I=\{(u_1,u_2)\in W^{1,p}_0(\Omega)\times W^{1,p}_0(\Omega)~:~u_i>  0 ~\text{in $\Omega$}~\text{and}~u_i\in L^{\infty}(\Omega)~~\text{for}~i=1,2\}~~\text{if $p>2$},
\end{equation*}
and 
\begin{equation*}
D_I=\{(u_1,u_2)\in W^{1,2}_0(\Omega)\times W^{1,2}_0(\Omega)~:~u_i>  0 ~\text{in $\Omega$}~\text{and}~u_i\in L^{\infty}(\Omega)~~\text{for}~i=1,2\}~~\text{if $1<p<2$}.
\end{equation*}
\begin{proposition}\label{p1}
\textup{For all $(u,v)\in D_I$, we have $I(u,v)\geq0.$ Furthermore $I(u,v)=0$ if and only if there exists $\alpha\in \mathbb{R}^{\star}_+$ such that $u=\alpha v.$}
\end{proposition}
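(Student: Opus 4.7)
The plan is to decompose $I(u,v) = I_p(u,v) + I_2(u,v)$, where for $q \in \{p,2\}$,
\[
I_q(u,v) := \left\langle -\Delta_q u,\, \frac{u^q-v^q}{u^{q-1}}\right\rangle + \left\langle -\Delta_q v,\, \frac{v^q-u^q}{v^{q-1}}\right\rangle,
\]
and to prove each summand is nonnegative by a Picone-type argument. Since both contributions are nonnegative, $I(u,v) \geq 0$ follows, and vanishing of $I$ forces vanishing of both $I_p$ and $I_2$, which in turn forces $u/v$ to be constant.

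The main tool is the pointwise inequality coming from convexity of $\xi \mapsto |\xi|^q$: for all $A, B \in \mathbb{R}^N$ with $B \neq 0$ and any $q > 1$,
\[
|A|^q \geq |B|^q + q|B|^{q-2}B \cdot (A - B),
\]
with equality if and only if $A = B$. Substituting $A = \nabla u$ and $B = (u/v)\nabla v$ (legitimate a.e.\ since $v > 0$) and rearranging yields the Picone inequality
\[
|\nabla u|^q + (q-1)\frac{u^q}{v^q}|\nabla v|^q \;\geq\; q\,\frac{u^{q-1}}{v^{q-1}}|\nabla v|^{q-2}\nabla v \cdot \nabla u.
\]
The right-hand side equals $|\nabla v|^{q-2}\nabla v \cdot \nabla(u^q/v^{q-1})$ by a direct computation, so after integrating over $\Omega$ and integrating by parts,
\[
\int_\Omega |\nabla u|^q\,dx \;\geq\; \left\langle -\Delta_q v,\, \frac{u^q}{v^{q-1}}\right\rangle.
\]
Exchanging the roles of $u$ and $v$ gives the symmetric bound.

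Combining these with the identity $\langle -\Delta_q u, u\rangle = \int_\Omega|\nabla u|^q\,dx$, I can rewrite
\[
I_q(u,v) = \Big(\langle -\Delta_q u, u\rangle - \langle -\Delta_q u, v^q/u^{q-1}\rangle\Big) + \Big(\langle -\Delta_q v, v\rangle - \langle -\Delta_q v, u^q/v^{q-1}\rangle\Big),
\]
and the two Picone bounds collapse the right-hand side to $\geq 0$. Summing over $q = p$ and $q = 2$ gives $I(u,v) \geq 0$. For the equality case, $I(u,v) = 0$ implies $I_p = I_2 = 0$, which forces equality a.e.\ in the underlying pointwise Picone inequality, i.e.\ $\nabla u = (u/v)\nabla v$, equivalently $\nabla(u/v) = 0$ a.e.; since $\Omega$ is connected and $v > 0$, this gives $u = \alpha v$ for some $\alpha > 0$. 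The converse is immediate by substitution.

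The main technical obstacle I expect is the admissibility of the test functions $u^q/v^{q-1}$ in the weak formulation defining $\langle -\Delta_q v, \cdot\rangle$: because $v$ vanishes on $\partial\Omega$, one must verify that $u^q/v^{q-1}$ lies in the correct Sobolev space ($W^{1,p}_0(\Omega)$ if $p>2$, $W^{1,2}_0(\Omega)$ if $1<p<2$). This is where the hypotheses built into $D_I$ enter, together with the $C^{1,\alpha}$ regularity and the Hopf boundary lemma that give boundary decay of $v$ comparable to the distance to $\partial\Omega$. A standard device is to work first with the regularized quotient $(u+\varepsilon)^q/(v+\varepsilon)^{q-1}$, establish the inequality for that, and then pass $\varepsilon \to 0^+$ via dominated convergence, treating the cases $p>2$ and $1<p<2$ separately in line with the definition of $D_I$.
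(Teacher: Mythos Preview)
Your approach is correct and takes a genuinely different route from the paper's. The paper proceeds by direct expansion: it computes the gradients of the four test functions $(u^p-v^p)/u^{p-1}$, $(v^p-u^p)/v^{p-1}$, $(u^2-v^2)/u$, $(v^2-u^2)/v$, and organizes the resulting integrand as $F+G$, where $F$ collects the terms involving $|\nabla u||\nabla v|-\nabla u\cdot\nabla v\ge 0$ (Cauchy--Schwarz), and $G$ is a function of $t=v/u$, $r=|\nabla u|$, $s=|\nabla v|$ shown nonnegative via the substitution $z=s/(tr)$ and an explicit factorization $G=r^p(t^pf(z)+g(z))+r^2(t^2h(z)+k(z))$ with $f,g,h,k$ elementary nonnegative functions vanishing only at $z=1$. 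Your argument packages each $q$-homogeneous part via the Picone inequality, which is more conceptual, avoids the algebraic bookkeeping, and makes the equality case transparent. One small presentational point: in your displayed decomposition of $I_q$, the bracketing does not match the Picone bounds term-by-term; you must pair $\langle-\Delta_q u,u\rangle$ with $\langle-\Delta_q v,u^q/v^{q-1}\rangle$ (and symmetrically), i.e.\ regroup across the two brackets, after which the two Picone inequalities give $\ge 0+0$ as you claim. The admissibility of the quotients $u^q/v^{q-1}$ as test functions, which you correctly flag, is glossed over in the paper as well for its own test functions; your $\varepsilon$-regularization sketch is the standard remedy and is appropriate here.
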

\begin{proof}
 We first show that $I(u,v)\geq 0.$ We recall that (if $2<p<\infty$) 
 \begin{equation*}
 \langle -\Delta_p u,w\rangle=\int_{\Omega}|\nabla u|^{p-2}\nabla u\cdot\nabla w~dx~~~\text{for all $w\in W^{1,p}_0(\Omega)$}
 \end{equation*}
 
\begin{equation*}
\langle -\Delta u,w\rangle=\int_{\Omega}\nabla u\cdot\nabla w~dx~~~\text{for all $w\in W^{1,p}_0(\Omega)$}.
\end{equation*}
and (if $1<p<2$)
\begin{equation*}
 \langle -\Delta_p u,w\rangle=\int_{\Omega}|\nabla u|^{p-2}\nabla u\cdot\nabla w~dx~~~\text{for all $w\in W^{1,2}_0(\Omega)$}
 \end{equation*}
 
\begin{equation*}
\langle -\Delta u,w\rangle=\int_{\Omega}\nabla u\cdot\nabla w~dx~~~\text{for all $w\in W^{1,2}_0(\Omega)$}.
\end{equation*}
Let us consider $\beta=\frac{u^p-v^p}{u^{p-1}}$, $\eta=\frac{v^p-u^p}{v^{p-1}}$, $\xi=\frac{u^2-v^2}{u}$ and $\zeta=\frac{v^2-u^2}{v}$ as test functions in (\ref{v1}) for any $p>1$. Straightforward computations give,
$$\nabla\left(\frac{u^p-v^p}{u^{p-1}}\right)=\left\{1+(p-1)\left(\frac{v}{u}\right)^p\right\}\nabla u-p\left(\frac{v}{u}\right)^{p-1}\nabla v$$
 $$\nabla\left(\frac{v^p-u^p}{v^{p-1}}\right)=\left\{1+(p-1)\left(\frac{u}{v}\right)^p\right\}\nabla v-p\left(\frac{u}{v}\right)^{p-1}\nabla u$$
 $$\nabla\left(\frac{u^2-v^2}{u}\right)=\left\{1+\left(\frac{v}{u}\right)^2\right\}\nabla u-2\left(\frac{v}{u}\right)\nabla v$$
 $$\nabla\left(\frac{v^2-u^2}{v}\right)=\left\{1+\left(\frac{u}{v}\right)^2\right\}\nabla v-2\left(\frac{u}{v}\right)\nabla u.$$
 Therefore 
 \begin{eqnarray*}
 \langle -\Delta_p u,\frac{u^p-v^p}{u^{p-1}}\rangle&=&\int_{\Omega}\left\{-p\left(\frac{v}{u}\right)^{p-1}|\nabla u|^{p-2}\nabla u\cdot\nabla v+\left(1+(p-1)\left(\frac{v}{u}\right)^p\right)|\nabla u|^p\right\}~dx\\
 &=&\int_{\Omega}\left\{p\left(\frac{v}{u}\right)^{p-1}|\nabla u|^{p-2}\left(|\nabla u||\nabla v|-\nabla u\cdot\nabla v\right)+\left(1+(p-1)\left(\frac{v}{u}\right)^p\right)|\nabla u|^p\right\}~dx\\ &-&\int_{\Omega} p\left(\frac{v}{u}\right)^{p-1}|\nabla u|^{p-1}|\nabla v|~dx
 \end{eqnarray*}
 and 
 \begin{eqnarray*}
 \langle -\Delta u,\frac{u^2-v^2}{u}\rangle=\int_{\Omega}\left\{2\left(\frac{v}{u}\right)\left(|\nabla u||\nabla v|-\nabla u\cdot \nabla v\right)+\left(1+\left(\frac{v}{u}\right)^2\right)|\nabla u|^2-2\left(\frac{v}{u}\right)|\nabla u||\nabla v|\right\}~dx.
 \end{eqnarray*}
 By symmetry we have 
   \begin{eqnarray*}
 \langle -\Delta_p v,\frac{v^p-u^p}{v^{p-1}}\rangle&=&\int_{\Omega}\left\{-p\left(\frac{u}{v}\right)^{p-1}|\nabla v|^{p-2}\nabla v\cdot\nabla u+\left(1+(p-1)\left(\frac{u}{v}\right)^p\right)|\nabla v|^p\right\}~dx\\
 &=&\int_{\Omega}\left\{p\left(\frac{u}{v}\right)^{p-1}|\nabla v|^{p-2}\left(|\nabla v||\nabla u|-\nabla v\cdot\nabla u\right)+\left(1+(p-1)\left(\frac{u}{v}\right)^p\right)|\nabla v|^p\right\}~dx\\ &-& \int_{\Omega}p\left(\frac{u}{v}\right)^{p-1}|\nabla v|^{p-1}|\nabla u|~dx
 \end{eqnarray*}
 and 
 \begin{eqnarray*}
 \langle -\Delta v,\frac{v^2-u^2}{v}\rangle=\int_{\Omega}\left\{2\left(\frac{u}{v}\right)\left(|\nabla v||\nabla u|-\nabla v\cdot \nabla u\right)+\left(1+\left(\frac{u}{v}\right)^2\right)|\nabla v|^2-2\left(\frac{u}{v}\right)|\nabla v||\nabla u|\right\}~dx.
 \end{eqnarray*}
 Thus
 \begin{eqnarray*}
 I(u,v)&=&\int_{\Omega}\left\{p\left(\frac{v}{u}\right)^{p-1}|\nabla u|^{p-2}\left(|\nabla u||\nabla v|-\nabla u\cdot\nabla v\right)+\left(1+(p-1)\left(\frac{v}{u}\right)^p\right)|\nabla u|^p\right\}~dx\\ &-& p\left(\frac{v}{u}\right)^{p-1}|\nabla u|^{p-1}|\nabla v|~dx\\
 &+&\int_{\Omega}\left\{p\left(\frac{u}{v}\right)^{p-1}|\nabla v|^{p-2}\left(|\nabla v||\nabla u|-\nabla v\cdot\nabla u\right)+\left(1+(p-1)\left(\frac{u}{v}\right)^p\right)|\nabla v|^p\right\}~dx\\ &-& p\left(\frac{u}{v}\right)^{p-1}|\nabla v|^{p-1}|\nabla u|~dx\\
 &+&\int_{\Omega}\left\{2\left(\frac{v}{u}\right)\left(|\nabla u||\nabla v|-\nabla u\cdot \nabla v\right)+\left(1+\left(\frac{v}{u}\right)^2\right)|\nabla u|^2-2\left(\frac{v}{u}\right)|\nabla u||\nabla v|\right\}~dx\\
 &+& \int_{\Omega}\left\{2\left(\frac{u}{v}\right)\left(|\nabla v||\nabla u|-\nabla v\cdot \nabla u\right)+\left(1+\left(\frac{u}{v}\right)^2\right)|\nabla v|^2-2\left(\frac{u}{v}\right)|\nabla v||\nabla u|\right\}~dx.
 \end{eqnarray*}
 So
\begin{equation*}
I(u,v)=\int_{\Omega}F(\frac{v}{u},\nabla v, \nabla u)~dx+\int_{\Omega}G(\frac{v}{u},|\nabla v|, |\nabla u|)~dx,
\end{equation*}
 where 
 \begin{eqnarray*}
 F(t,S,R)&=&p\left\{t^{p-1}|R|^{p-2}\left(|R||S|-R\cdot S\right)+t^{1-p}|S|^{p-2}\left(|R||S|-R\cdot S\right)\right\}\\&+&2\left\{t\left(|R||S|-R\cdot S\right)\right\}+ 2\left\{t^{-1}\left(|R||S|-R\cdot S\right)\right\}
\end{eqnarray*} 
and 
\begin{eqnarray*}
 G(t,s,r)&=&\left(1+(p-1)t^p\right)r^p+
 \left(1+(p-1)t^{-p}\right)s^p+(1+t^2)r^2~~~\\&+&(1+t^{-2})s^2-pt^{p-1}r^{p-1}s-pt^{1-p}s^{p-1}r-2trs-2t^{-1}rs,
\end{eqnarray*} 
for all $t=\frac{v}{u}>0, R=\nabla u, S=\nabla v\in \mathbb{R}^N$ and $r=|\nabla u|, s=|\nabla v|\in\mathbb{R}^+.$
We clearly have that $F$ is non-negative. Now let us show that $G$ is non-negative. Indeed, we observe that 
\begin{equation*}
G(t,s,0)=\left(1+(p-1)t^{-p}\right)s^p+(1+t^{-2})s^2\geq 0
\end{equation*}
and 
$G(t,s,0)=0\Rightarrow s=0.$ If $r\ne 0$, by setting $z=\frac{s}{tr}$ we obtain
\begin{eqnarray*}
G(t,s,r)&=&t^pr^p(z^p-pz+(p-1))+r^p((p-1)z^p-pz^{p-1}+1) \nonumber\\ &+&t^2r^2(z^2-2z+1)+r^2(z^2-2z+1),
\end{eqnarray*}
 and $G$ can be written as 
\begin{equation*}
G(t,s,r)=r^p(t^pf(z)+g(z))+r^2(t^2h(z)+k(z)),
\end{equation*}
  with $f(z)=z^p-pz+(p-1),$ $g(z)=(p-1)z^p-pz^{p-1}+1,$ $h(z)=k(z)=z^2-2z+1$ $\forall p>1 .$ We can see that $f,g,h$ and $k$ are non-negative.
  Hence $G$ is non-negative and thus $I(u,v)\geq 0$ for all $(u,v)\in D_I.$ In addition since $f,g,h$ and $k$ vanish if and only if $z=1$, then $G(t,s,r)=0$ if and only if $s=tr.$
Consequently, if $I(u,v)=0$ then we have
\begin{equation*}
\nabla u\cdot \nabla v=|\nabla u||\nabla v|~~\text{and}~~u|\nabla v|=v|\nabla u|
\end{equation*}
  almost everywhere in $\Omega.$ This is equivalent to $\left(u\nabla v-v\nabla u\right)^2=0,$ which implies that $u=\alpha v$ with $\alpha\in\mathbb{R}^{\star}_+.$
\end{proof}
\begin{theorem}
\textup{The first eigenvalues $\lambda$ of equation (\ref{e3}) are simple, i.e. if $u$ and $v$ are two positive first eigenfunctions associated to $\lambda,$ then $u=v.$ }
\end{theorem}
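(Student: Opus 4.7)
By Theorem~\ref{T1} and Proposition~\ref{N2}, both $u$ and $v$ lie in $L^\infty(\Omega)$ and are strictly positive in $\Omega$, so $(u,v)\in D_I$ and Proposition~\ref{p1} applies: $I(u,v)\ge 0$, with equality if and only if $u=\alpha v$ for some $\alpha>0$. The strategy is therefore to force $I(u,v)=0$ using the eigenvalue equations, and then to pin down $\alpha=1$ via the non-homogeneity of the operator $-\Delta_p-\Delta$.

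For the core step, I would test the weak formulation~(\ref{v1}) for $u$ against $\beta=(u^p-v^p)/u^{p-1}$ and $\xi=(u^2-v^2)/u$, and the corresponding equation for $v$ against $\eta=(v^p-u^p)/v^{p-1}$ and $\zeta=(v^2-u^2)/v$, and sum the four identities. Since $\int_\Omega u\xi\,dx+\int_\Omega v\zeta\,dx=0$, the right-hand side collapses to $\lambda\int_\Omega (u^p-v^p)(u^{2-p}-v^{2-p})\,dx$, whose integrand is pointwise $\le 0$ for $p>2$ and $\ge 0$ for $1<p<2$ by the opposite/same-direction monotonicity of $x\mapsto x^p$ and $x\mapsto x^{2-p}$. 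The left-hand side equals $I(u,v)+I^{*}(u,v)$, where $I^{*}(u,v)$ is the companion sum obtained by swapping the test functions between the $-\Delta_p$ piece and the $-\Delta$ piece. Performing an $F+G$-type decomposition of $I^{*}(u,v)$ in the spirit of the proof of Proposition~\ref{p1}, adapted to the relevant $p$-regime, one concludes that $I(u,v)=0$, hence $u=\alpha v$. A conceptually cleaner route in the case $p>2$ is a convexity argument along $w_t=\sqrt{tu^2+(1-t)v^2}$: the Cauchy--Schwarz identity $|u\nabla v-v\nabla u|^2\ge 0$ gives $|\nabla w_t|^2\le t|\nabla u|^2+(1-t)|\nabla v|^2$, which together with the convexity of $s\mapsto s^{p/2}$ for $p\ge 2$ upgrades to $|\nabla w_t|^p\le t|\nabla u|^p+(1-t)|\nabla v|^p$, yielding the chord-type inequality $F_\lambda(w_t)\le tF_\lambda(u)+(1-t)F_\lambda(v)$; since $u$ and $v$ are critical points of $F_\lambda$, one extracts $u\nabla v=v\nabla u$ a.e.

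Finally, substituting $u=\alpha v$ into $-\Delta_p u-\Delta u=\lambda u$ and using the eigenvalue equation for $v$ in the form $-\Delta_p v=\lambda v+\Delta v$, a direct calculation reduces to $(\alpha^{p-2}-1)(-\Delta_p v)=0$ in the weak sense. Since $\lambda>\lambda^D_1$, the function $v$ cannot be a positive Dirichlet eigenfunction of $-\Delta$ (only $\lambda^D_1$ admits a positive Dirichlet eigenfunction), so $-\Delta_p v\not\equiv 0$; therefore $\alpha^{p-2}=1$, and with $\alpha>0$ and $p\neq 2$ we conclude $\alpha=1$, i.e., $u=v$. The delicate point is establishing $u=\alpha v$: while Proposition~\ref{p1} gives one direction, the reverse inequality $I(u,v)\le 0$ must be extracted from the specific form of the eigenvalue equation, and the Picone-type analysis of $I^{*}(u,v)$ requires separate adaptation for $p>2$ and $1<p<2$ since the sign of the mixed integral $\int_\Omega(u^p-v^p)(u^{2-p}-v^{2-p})\,dx$ flips across $p=2$.
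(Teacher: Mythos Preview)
Your two-step skeleton---use Proposition~\ref{p1} to get $u=\alpha v$, then insert into the equation to force $\alpha=1$---is exactly the paper's proof, which consists of precisely those two sentences. The paper does not spell out why the equality case of Proposition~\ref{p1} is triggered; you rightly identify this as the real content and try to supply it. Your final step ($(\alpha^{p-2}-1)(-\Delta_p v)=0$, hence $\alpha=1$) is correct and is more explicit than the paper's one-line ``inserting this into the equation implies $\alpha=1$.''

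Your justification of $I(u,v)=0$, however, has a genuine gap. The identity
\[
I(u,v)+I^{*}(u,v)=\lambda\int_\Omega(u^p-v^p)(u^{2-p}-v^{2-p})\,dx
\]
and the sign of the right-hand side are correct. But to conclude $I(u,v)=0$ you need control on the swapped term $I^{*}(u,v)$, and the sentence ``performing an $F+G$-type decomposition \ldots\ one concludes that $I(u,v)=0$'' is not a proof. For $1<p<2$ the right-hand side is $\ge 0$, so from $I\ge 0$ and $I+I^{*}\ge 0$ you get nothing unless you prove $I^{*}\le 0$; for $p>2$ the signs cooperate only if you prove $I^{*}\ge 0$. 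Neither is obvious: the mixed pairings $\langle-\Delta u,(u^p-v^p)/u^{p-1}\rangle$ and $\langle-\Delta_p u,(u^2-v^2)/u\rangle$ are not standard Picone expressions and there is no a priori reason their sum has a sign. Your convexity alternative along $w_t=\sqrt{tu^2+(1-t)v^2}$ is a legitimate and well-known route for $p>2$, but it requires $u$ and $v$ to be \emph{minimizers} of $F_\lambda$, not merely critical points as you write, and it leaves the case $1<p<2$ untreated.
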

\begin{proof}
By proposition \ref{p1}, we have $u=\alpha v.$ Inserting this into the equation (\ref{e3}) implies that $\alpha=1.$
\end{proof}
\subsection{Invertibility of the operator $-\Delta_p-\Delta$}\label{S5}
To simplify some notations, here we set $X=W^{1,p}_0(\Omega)$ and its dual $X^{\star}=W^{-1,p'}(\Omega),$ where $\frac{1}{p}+\frac{1}{p'}=1.$\\
For the proof of the following lemma, we refer to \cite{PL1}.
\begin{lemma}\label{L3}
\textup{Let $p>2$. Then there exist two positive constants $c_1, c_2$ such that, for all $x_1,x_2\in\mathbb{R}^n,$ we have :}
\begin{enumerate}
\item[(i)] $(x_2-x_1)\cdot (|x_2|^{p-2}x_2-|x_1|^{p-2}x_1)\geq c_1|x_2-x_1|^p$
\item[(ii)]$\left||x_2|^{p-2}x_2-|x_1|^{p-2}x_1\right|\leq c_2(|x_2|+|x_1|)^{p-2}|x_2-x_1|$
\end{enumerate}
\end{lemma}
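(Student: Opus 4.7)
The plan is to derive both inequalities from the integral representation
\begin{equation*}
|x_2|^{p-2}x_2 - |x_1|^{p-2}x_1 \;=\; \int_0^1 A(y(t))\,(x_2-x_1)\,dt,
\end{equation*}
where $y(t):=x_1+t(x_2-x_1)$ and
\begin{equation*}
A(y) \;:=\; D\bigl(|y|^{p-2}y\bigr) \;=\; |y|^{p-2}I + (p-2)\,|y|^{p-4}\,y\otimes y,\qquad y\neq 0.
\end{equation*}
Since $p>2$, the integrand extends continuously through any $t$ with $y(t)=0$, so the identity holds without restriction on $x_1,x_2$. The symmetric matrix $A(y)$ has eigenvalues $|y|^{p-2}$ on $y^{\perp}$ (multiplicity $n-1$) and $(p-1)|y|^{p-2}$ along $\spann\{y\}$, so for every $v\in\mathbb{R}^n$,
\begin{equation*}
|y|^{p-2}|v|^{2} \;\leq\; v\cdot A(y)v \;\leq\; (p-1)\,|y|^{p-2}|v|^{2}.
\end{equation*}
This two-sided eigenvalue bound is the single computational fact driving both estimates.

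For (ii), I apply the upper bound with $v=x_2-x_1$ and use $|y(t)|\leq |x_1|+|x_2|$ for all $t\in[0,1]$ to obtain
\begin{equation*}
\bigl||x_2|^{p-2}x_2-|x_1|^{p-2}x_1\bigr| \;\leq\; (p-1)\int_0^1|y(t)|^{p-2}\,dt\;|x_2-x_1| \;\leq\; (p-1)\bigl(|x_1|+|x_2|\bigr)^{p-2}|x_2-x_1|,
\end{equation*}
so (ii) holds with $c_2=p-1$. This direction is routine.

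For (i), pairing the lower eigenvalue bound with $v=x_2-x_1$ gives
\begin{equation*}
(x_2-x_1)\cdot\bigl(|x_2|^{p-2}x_2-|x_1|^{p-2}x_1\bigr) \;\geq\; |x_2-x_1|^{2}\int_0^1|y(t)|^{p-2}\,dt,
\end{equation*}
so the real content is the lower bound $\int_0^1|y(t)|^{p-2}\,dt\geq c\,|x_2-x_1|^{p-2}$ for a constant $c=c(p)>0$. This is the main obstacle. Setting $r:=|x_2-x_1|$, the triangle inequality $r\leq|x_1|+|x_2|$ forces $\max\{|x_1|,|x_2|\}\geq r/2$; without loss of generality assume $|x_2|\geq r/2$. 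Then for $t\in[3/4,1]$ one has $|y(t)-x_2|=(1-t)r\leq r/4$, hence $|y(t)|\geq r/4$, and therefore
\begin{equation*}
\int_0^1 |y(t)|^{p-2}\,dt \;\geq\; \int_{3/4}^{1}\bigl(r/4\bigr)^{p-2}\,dt \;=\; \tfrac{1}{4}\cdot 4^{-(p-2)}\,r^{p-2},
\end{equation*}
which proves (i) with $c_1 = 4^{-(p-1)}$; the case $|x_1|\geq r/2$ is symmetric. The remaining step of extending through segments meeting the origin is handled by applying the inequality on $(x_1+\varepsilon e, x_2+\varepsilon e)$ and letting $\varepsilon\to 0$, using that both sides are continuous in $(x_1,x_2)$ for $p>2$.
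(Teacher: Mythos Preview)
Your proof is correct. The paper itself does not prove this lemma at all; it simply refers to Lindqvist's notes \cite{PL1}, so there is no in-paper argument to compare against. Your approach via the integral representation and the eigenvalue bounds for $A(y)=|y|^{p-2}I+(p-2)|y|^{p-4}y\otimes y$ is in fact the standard one and is essentially the argument found in the cited reference, so in that sense you have reconstructed what the paper outsources.

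One minor remark: your closing sentence about perturbing by $\varepsilon e$ to handle segments through the origin is unnecessary. You already observed at the outset that for $p>2$ the map $y\mapsto |y|^{p-2}y$ is $C^1$ on all of $\mathbb{R}^n$ (with $A(0)=0$), so the integral identity holds without restriction, and your lower-bound argument for $\int_0^1|y(t)|^{p-2}\,dt$ uses only $|x_2|\geq r/2$ and the triangle inequality, which are insensitive to whether the segment meets the origin. You can safely delete that last step.
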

\begin{proposition}\label{lll1}
\textup{For $p>2,$ the operator $-\Delta_p-\Delta$ is a global homeomorphism.}
\end{proposition}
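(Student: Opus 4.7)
The plan is to use the classical Browder--Minty theory of monotone operators, viewing $T := -\Delta_p - \Delta$ as a map from $X = W^{1,p}_0(\Omega)$ into $X^\star = W^{-1,p'}(\Omega)$. Since $p > 2$, we have the continuous embedding $W^{1,p}_0(\Omega) \hookrightarrow W^{1,2}_0(\Omega)$, so $-\Delta u$ is well defined as an element of $X^\star$ for $u \in X$, and the pairing $\langle Tu, v \rangle = \int_\Omega |\nabla u|^{p-2} \nabla u \cdot \nabla v\,dx + \int_\Omega \nabla u \cdot \nabla v\,dx$ makes sense. I would verify four properties: continuity, strict monotonicity, coercivity, and continuity of the inverse.

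First I would check that $T : X \to X^\star$ is continuous. For the $p$-Laplacian piece, part (ii) of Lemma \ref{L3} combined with Hölder's inequality (with exponents $\frac{p}{p-2}$, $p$ for the three factors $(|\nabla u| + |\nabla v|)^{p-2}$, $|\nabla(u-v)|$, $|\nabla w|$) gives a bound of the form $\|{-\Delta_p}u - {-\Delta_p}v\|_{X^\star} \lesssim (\|u\|_{1,p} + \|v\|_{1,p})^{p-2}\|u-v\|_{1,p}$; the linear part $-\Delta$ is continuous $X \to X^\star$ by the embedding $W^{1,p}_0 \hookrightarrow W^{1,2}_0$ and the continuity of $-\Delta : W^{1,2}_0 \to W^{-1,2}$ together with $W^{-1,2} \hookrightarrow W^{-1,p'}$.

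Next, using part (i) of Lemma \ref{L3} with $x_1 = \nabla v(x)$, $x_2 = \nabla u(x)$ and integrating, together with the obvious bound for $-\Delta$, I obtain
\begin{equation*}
\langle T u - Tv, u - v \rangle \;\geq\; c_1 \int_\Omega |\nabla(u-v)|^p\,dx + \int_\Omega |\nabla(u-v)|^2\,dx \;\geq\; c_1 \|u-v\|_{1,p}^p,
\end{equation*}
which is strict monotonicity (hence injectivity) and, taking $v = 0$, gives coercivity $\langle Tu, u\rangle \geq c_1 \|u\|_{1,p}^p$, so that $\langle Tu,u\rangle / \|u\|_{1,p} \to \infty$. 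By the Browder--Minty theorem, $T$ is then a bijection from $X$ onto $X^\star$.

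Finally, for the continuity of $T^{-1}$, I would feed the same monotonicity estimate back in: if $f_n \to f$ in $X^\star$ and $u_n = T^{-1} f_n$, $u = T^{-1} f$, then
\begin{equation*}
c_1 \|u_n - u\|_{1,p}^p \;\leq\; \langle Tu_n - Tu,\, u_n - u\rangle \;\leq\; \|f_n - f\|_{X^\star}\, \|u_n - u\|_{1,p},
\end{equation*}
so $\|u_n - u\|_{1,p}^{p-1} \leq c_1^{-1}\|f_n - f\|_{X^\star} \to 0$. The only genuinely delicate point is keeping the two nonlinear structures compatible (the $p$-Laplacian produces $p$-powers of $\nabla(u-v)$, the Laplacian produces $2$-powers); but for $p > 2$ the Poincaré-type bound $\|u\|_{1,2} \leq |\Omega|^{(p-2)/2p}\|u\|_{1,p}$ used in Lemma \ref{l3} ensures that the Laplacian contribution is dominated and does not obstruct coercivity, so the monotone-operator machinery goes through cleanly.
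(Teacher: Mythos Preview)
Your proof is correct and follows essentially the same route as the paper: both rely on the monotonicity estimate from Lemma~\ref{L3}(i) to obtain $\langle Tu - Tv, u-v\rangle \geq c_1\|u-v\|_{1,p}^p$, and on Lemma~\ref{L3}(ii) plus H\"older for continuity of $T$. The only cosmetic difference is that the paper packages surjectivity and continuity of the inverse into a single citation (a corollary in Chang's book stating that a continuous strongly monotone operator is a homeomorphism), whereas you unpack this into Browder--Minty for bijectivity and then derive continuity of $T^{-1}$ directly from the same monotonicity inequality; the content is identical.
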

\noindent The proof is based on the previous Lemma \ref{L3}.
\begin{proof}
Define the nonlinear operator $A:X\rightarrow X^{\star}$ by\\ $\langle Au,v\rangle=\displaystyle{\int_{\Omega}}\nabla u\cdot\nabla v~dx+\displaystyle{\int_{\Omega}}|\nabla u|^{p-2}\nabla u\cdot \nabla v~dx$ for all $u,v\in X.$\\
To show that $-\Delta_p-\Delta$ is a homeomorphism, it is enough to show that $A$ is a continuous strongly monotone operator, [see \cite{KC}, Corollary 2.5.10].\\

\noindent For $p>2,$ for all $u, v\in X$, by $(i)$, we get 
\begin{eqnarray*}
\langle Au-Av,u-v\rangle &=& \int_{\Omega}|\nabla(u-v)|^2dx+\int_{\Omega}\left(|\nabla u|^{p-2}\nabla u-|\nabla v|^{p-2}\nabla v\right)\cdot\nabla(u-v)~dx\\
&\geq& \int_{\Omega}|\nabla(u-v)|^2dx+c_1\int_{\Omega}|\nabla(u-v)|^pdx\\
&\geq&  c_1\|u-v\|^p_{1,p}
\end{eqnarray*}
Thus $A$ is a strongly monotone operator.\\ We claim that $A$ is a continuous operator from $X$ to $X^{\star}.$  Indeed, assume that $u_n\rightarrow u$ in $X.$ We have to show that $\|Au_n-Au\|_{X^{\star}}\rightarrow 0$ as $n\rightarrow\infty.$ Indeed, using $(ii)$ and H\"older's inequality and the Sobolev embedding theorem, one has
\begin{eqnarray*}
\left|\langle Au_n-Au,w\rangle\right| &\leq & \int_{\Omega}\left||\nabla u_n|^{p-2}\nabla u_n-|\nabla u|^{p-2}\nabla u\right||\nabla w|~dx+\int_{\Omega}|\nabla(u_n-u)||\nabla w|~dx\\
&\leq& c_2 \int_{\Omega}\left(|\nabla u_n|+|\nabla u|\right)^{p-2}|\nabla(u_n-u)||\nabla w|~dx+\int_{\Omega}|\nabla(u_n-u)||\nabla w|~dx\\
&\leq& c_2\left(\int_{\Omega}\left(|\nabla u_n|+|\nabla u|\right)^{p}dx\right)^{p-2/p}\left(\int_{\Omega}|\nabla(u_n-u)|^p dx\right)^{1/p}\left(\int_{\Omega}|\nabla w|^p dx\right)^{1/p}\\ &+& c_3\|u_n-u\|_{1,2}\| w\|_{1,2}\\
&\leq & c_4(\|u_n\|_{1,p}+\|u\|_{1,p})^{p-2}\|u_n-u\|_{1,p}\|w\|_{1,p}+c_5\|u_n-u\|_{1,p}\| w\|_{1,p}.
\end{eqnarray*}
Thus $\|Au_n-Au\|_{X^{\star}}\rightarrow 0$, as $n\rightarrow +\infty,$ and hence $A$ is a homeomorphism.
\end{proof}
\section{Bifurcation of eigenvalues}\label{S6}
In the next subsection we show that for equation (\ref{e3}) there is a branch of first eigenvalues bifurcating from $(\lambda_1^D, 0)\in \mathbb{R}^+\times W^{1,p}_0(\Omega).$
\subsection{Bifurcation from zero : the case $p>2$}
By proposition \ref{lll1}, equation (\ref{e3}) is equivalent to
\begin{equation}\label{ss1}
u=\lambda(-\Delta_p-\Delta)^{-1}u~~~\text{for}~~~u\in W^{-1,p'}(\Omega).
\end{equation}
We set \begin{equation}\label{Ss}
S_{\lambda}(u)=u-\lambda(-\Delta_p-\Delta)^{-1}u,
\end{equation}
$~u\in L^2(\Omega)\subset W^{-1,p'}(\Omega)~\text{and}~\lambda>0.$ By $\Sigma=\{(\lambda,u)\in\mathbb{R}^+\times W^{1,p}_0(\Omega)/~u\neq 0~,S_{\lambda}(u)=0 \}$, we denote the set of nontrivial solutions of (\ref{ss1}). 
A bifurcation point for (\ref{ss1}) is a number $\lambda^{\star}\in\mathbb{R}^+$ such that $(\lambda^{\star},0)$ belongs to the closure of $\Sigma.$ This is equivalent to say that, in any neighbourhood of $(\lambda^{\star},0)$ in $\mathbb{R}^+\times W^{1,p}_0(\Omega)$, there exists a nontrivial solution of $S_{\lambda}(u)=0$.\\
Our goal is to apply the Krasnoselski bifurcation theorem [see, \cite{AM}].
\begin{theorem} \textup{(Krasnoselski, 1964)}\\
\textup{Let $X$ be a Banach space and let $T\in C^1(X,X)$ be a compact operator such that $T(0)=0$ and $T'(0)=0.$ Moreover, let $A\in \mathcal{L}(X)$ also be compact. Then every characteristic value $\lambda^{*}$ of $A$ with odd (algebraic) multiplicity is a bifurcation point for $u=\lambda Au+T(u).$}
\end{theorem}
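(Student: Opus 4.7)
The plan is to argue by contradiction using Leray--Schauder degree, in the spirit of \cite{AM}. Rewrite the equation $u=\lambda Au+T(u)$ as $\Phi(\lambda,u)=0$ with
\[
\Phi(\lambda,u):=u-\lambda Au-T(u),
\]
which is, for each $\lambda$, a compact perturbation of the identity. Suppose toward a contradiction that $\lambda^{\star}$ is \emph{not} a bifurcation point: there exist $\varepsilon_0,r_0>0$ such that $\Phi(\lambda,u)\neq 0$ whenever $|\lambda-\lambda^{\star}|\le\varepsilon_0$ and $0<\|u\|\le r_0$. Because the characteristic values of a compact linear operator form an isolated set, I may shrink $\varepsilon_0$ so that $\lambda^{\star}$ is the only such value in $[\lambda^{\star}-\varepsilon_0,\lambda^{\star}+\varepsilon_0]$; in particular $I-\lambda A$ is invertible for every $\lambda$ in this interval with $\lambda\neq\lambda^{\star}$.

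Next I would identify the Leray--Schauder index of $\Phi(\lambda,\cdot)$ at $0$ with that of its linearization. The hypotheses $T(0)=0$ and $T'(0)=0$ give $\|T(u)\|=o(\|u\|)$ as $u\to 0$, so that on a sufficiently small ball $B_\rho$ the affine homotopy $s\mapsto u-\lambda Au-sT(u)$, $s\in[0,1]$, is admissible: its values on $\partial B_\rho$ are bounded below in norm, uniformly in $s$, by a positive multiple of $\rho$, using the coercivity of $I-\lambda A$. Homotopy invariance then yields
\[
i(\Phi(\lambda,\cdot),0)\;=\;\deg(I-\lambda A,B_\rho,0)\;=\;(-1)^{\beta(\lambda)}
\]
for every $\lambda\in[\lambda^{\star}-\varepsilon_0,\lambda^{\star}+\varepsilon_0]\setminus\{\lambda^{\star}\}$, where $\beta(\lambda)$ is the sum of the algebraic multiplicities of the characteristic values of $A$ that are strictly smaller than $\lambda$, by the classical Leray--Schauder index formula for compact linear perturbations of the identity. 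As $\lambda$ crosses $\lambda^{\star}$ the integer $\beta(\lambda)$ jumps by the \emph{odd} number $m_a(\lambda^{\star})$, so the index flips sign.

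On the other hand, by the contradiction hypothesis the straight-line homotopy $t\mapsto \Phi\bigl((1-t)(\lambda^{\star}-\varepsilon_0)+t(\lambda^{\star}+\varepsilon_0),\cdot\bigr)$, $t\in[0,1]$, never vanishes on $\partial B_{r_0}$, so it is an admissible compact homotopy and its degree on $B_{r_0}$ is constant in $t$. This forces $i(\Phi(\lambda^{\star}-\varepsilon_0,\cdot),0)=i(\Phi(\lambda^{\star}+\varepsilon_0,\cdot),0)$, contradicting the sign flip of the previous step. The main obstacle I anticipate is the admissibility of the $T$-to-$0$ homotopy: it requires the estimate $\|T(u)\|=o(\|u\|)$ coming from $T'(0)=0$ to be combined with a quantitative lower bound $\|(I-\lambda A)u\|\ge c_\lambda\|u\|$ on a small sphere, uniform in $\lambda$ on each side of $\lambda^{\star}$; the constant $c_\lambda$ degenerates precisely at $\lambda=\lambda^{\star}$, which is exactly why one computes the index only at the two nearby regular values and compares them.
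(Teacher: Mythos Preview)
The paper does not prove this theorem: it is quoted as a classical result (Krasnoselski, 1964) and the reader is referred to \cite{AM}. It is used as a black box in the proof of Theorem~\ref{bz}. So there is no ``paper's own proof'' to compare against.

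That said, your argument is the standard one and is correct. It is precisely the Leray--Schauder degree proof one finds in the reference \cite{AM} that the paper cites: assume no bifurcation, linearize via the homotopy killing $T$ (admissible because $T'(0)=0$ gives $T(u)=o(\|u\|)$), compute the index on each side of $\lambda^{\star}$ by the Leray--Schauder formula $(-1)^{\beta(\lambda)}$, and derive a contradiction from the odd jump of $\beta$ versus the constancy of the degree along the $\lambda$-homotopy. Your remark about the degeneration of the constant $c_\lambda$ at $\lambda=\lambda^{\star}$ is exactly the reason one works only at $\lambda^{\star}\pm\varepsilon$. In fact the paper reproduces this very scheme in a concrete setting later on (proofs of Theorems~4.6 and~4.7), where the same index-jump contradiction is carried out by hand for the transformed operator $\tilde S_\lambda$.
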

\noindent We now state our bifurcation result.
\begin{theorem}\label{bz}
\textup{Let $p>2.$ Then every eigenvalue $\lambda_k^D$ with odd multiplicity is a bifurcation point in $\mathbb{R}^+\times W^{1,p}_0(\Omega)$ of $S_{\lambda}(u)=0,$ in the sense that in any neighbourhood of $(\lambda^D_k,0)$ in $\mathbb{R}^+\times W^{1,p}_0(\Omega)$ there exists a nontrivial solution of $S_{\lambda}(u)=0$.}
\end{theorem}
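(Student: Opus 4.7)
The plan is to rewrite $S_{\lambda}(u)=0$ in the form $u=\lambda Au+T(u)$ and then invoke Krasnoselski's theorem. By Proposition~\ref{lll1}, the map $K:=(-\Delta_p-\Delta)^{-1}$ is a continuous bijection from $W^{-1,p'}(\Omega)$ onto $W^{1,p}_0(\Omega)$, so $S_{\lambda}(u)=0$ becomes the fixed-point equation $u=\lambda K(u)$. Since $p>2$, the $p$-Laplacian is strictly superlinear at the origin, so the natural linear part of $K$ at $0$ is $(-\Delta)^{-1}$; setting $A:=(-\Delta)^{-1}$ and $T(u):=K(u)-Au$ gives
\[
u=\lambda Au+\lambda T(u),
\]
which is of the required form (the harmless factor $\lambda$ in front of $T$ is absorbed in the standard formulation of Krasnoselski with uniformly higher-order perturbations).

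I would then verify the hypotheses in the Banach space $X=W^{1,p}_0(\Omega)$. The operator $A=(-\Delta)^{-1}$ is linear and compact on $X$: for $u\in W^{1,p}_0\hookrightarrow L^p$, elliptic $L^p$-regularity yields $Au\in W^{2,p}(\Omega)\cap W^{1,p}_0(\Omega)$, and Rellich--Kondrachov provides the compact reinjection into $W^{1,p}_0$. Its characteristic values coincide, by definition, with the Dirichlet eigenvalues $\lambda_k^D$ of $-\Delta$, carrying the standard multiplicities (algebraic equal to geometric by self-adjointness on $L^2$).

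The core of the proof is to establish $T(0)=0$ and $T'(0)=0$. The former is immediate from $K(0)=0$. For the latter the decisive ingredient is the superlinear bound
\[
\|{-\Delta_p v}\|_{W^{-1,p'}}\leq \|v\|_{1,p}^{p-1},\qquad p-1>1,
\]
combined with the energy estimate $\|K(u)\|_{1,p}\leq C\|u\|$ obtained by testing the resolvent equation against $K(u)$. Together these give $\|T(u)\|=o(\|u\|)$ as $u\to 0$; equivalently, the implicit function theorem applied to $F(u)=(-\Delta_p-\Delta)u$ (whose Fr\'echet derivative at $0$ is $-\Delta$, the $p$-Laplacian contributing nothing at linear order) identifies $K'(0)=(-\Delta)^{-1}=A$. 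The $C^1$-regularity and compactness of $T$ then follow from the corresponding properties of $K$ and $A$ and the continuity of the Nemytskii map $u\mapsto|\nabla u|^{p-2}\nabla u$.

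With the hypotheses in hand, Krasnoselski's theorem yields that every $\lambda_k^D$ of odd algebraic multiplicity is a bifurcation point of $u=\lambda Au+T(u)$, hence of $S_\lambda(u)=0$, in $\mathbb{R}^+\times W^{1,p}_0(\Omega)$. The main technical obstacle I anticipate is the functional-analytic bookkeeping for $T$: the natural codomain of $(-\Delta)^{-1}\circ\Delta_p$ is $W^{1,p'}_0$, which strictly contains $W^{1,p}_0$ for $p>2$, so the estimate $\|T(u)\|_{1,p}=o(\|u\|_{1,p})$ must be extracted using the strong monotonicity of $-\Delta_p-\Delta$ (Proposition~\ref{lll1}) rather than via a naive composition of operator norms. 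This bookkeeping, together with a careful Taylor-type expansion of $K$ at the origin, is the step that requires the most care.
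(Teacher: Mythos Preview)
Your proposal is correct and follows essentially the same route as the paper: write $S_\lambda(u)=0$ as $u=\lambda Au+T_\lambda(u)$ with $A=(-\Delta)^{-1}$ and $T_\lambda=[(-\Delta_p-\Delta)^{-1}-(-\Delta)^{-1}](\lambda\,\cdot)$, verify that $A$ is compact linear, that $T_\lambda$ is compact $C^1$ with $T_\lambda(0)=0$ and $T_\lambda'(0)=0$, and then invoke Krasnoselski. The only noteworthy difference is the choice of ambient space: the paper carries out the argument in $L^2(\Omega)$, using the compact embeddings $W^{1,p}_0\subset\subset L^2$ and $W^{1,2}_0\subset\subset L^2$ to obtain compactness of $A$ and $T_\lambda$ directly, whereas you work in $W^{1,p}_0(\Omega)$ and appeal to elliptic $L^p$-regularity for the compactness of $A$; for the verification of $T_\lambda'(0)=0$, the paper computes the G\^ateaux derivative of $-\Delta_p$ explicitly and applies the inverse function theorem, while you argue via the superlinear bound $\|{-\Delta_p v}\|_{W^{-1,p'}}\le\|v\|_{1,p}^{p-1}$ together with the strong monotonicity estimate from Proposition~\ref{lll1}---this is arguably cleaner and sidesteps the bookkeeping issue you flag about the codomain of $(-\Delta)^{-1}\circ\Delta_p$, which the paper's $L^2$ framework avoids automatically.
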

\begin{proof}
We write the equation $S_{\lambda}(u)=0$ as $$ u=\lambda Au+T_{\lambda}(u),$$ where $Au=(-\Delta)^{-1}u$ and $T_{\lambda}(u)=[(-\Delta_p-\Delta)^{-1}-(-\Delta)^{-1}](\lambda u)$, where we consider $$(-\Delta_p-\Delta)^{-1}: L^2(\Omega)\subset W^{-1,p'}(\Omega)\rightarrow W^{1,p}_0(\Omega)\subset\subset L^2(\Omega)$$ and $(-\Delta)^{-1} : L^2(\Omega)\subset W^{-1,2}(\Omega)\rightarrow W^{1,2}_0(\Omega)\subset\subset L^2(\Omega).$\\

\noindent For $p>2,$ the mapping $$(-\Delta_p-\Delta)^{-1}-(-\Delta)^{-1}: L^2(\Omega)\subset W^{-1,p'}(\Omega)\rightarrow W^{1,p}_0(\Omega)\subset\subset L^2(\Omega)$$ is compact thanks to Rellich-Kondrachov theorem. We clearly have 
$A\in \mathcal{L}(L^2(\Omega))$ and $T_{\lambda}(0)=0$. Now we have to show that
\begin{enumerate}
\item[(1)]$T_{\lambda}\in C^1$.
\item[(2)] $T_{\lambda}'(0)=0$.
\end{enumerate} 

\noindent In order to show $(1)$ and $(2)$, it suffices to show that 
\begin{enumerate}
\item[(a)] $-\Delta_p-\Delta : W^{1,p}_0(\Omega)\rightarrow W^{-1,p'}(\Omega)$ is continuously differentiable in a neighborhood $u\in W^{1,p}_0(\Omega)$.
\item[(b)] $(-\Delta_p-\Delta)^{-1}$ is a continuous inverse operator.
\end{enumerate}
According to Proposition \ref{lll1}, $-\Delta_p-\Delta$ is a homeomorphism, hence $(-\Delta_p-\Delta)^{-1}$ is continuous and this shows $(b).$ We also recall that in section \ref{S4}, we have shown that $\lambda^D_1$ is simple. \\
\\
Let us show (a). We claim that $-\Delta_p :  W^{1,p}_0(\Omega)\rightarrow W^{-1,p'}(\Omega)$ is G\^ateaux differentiable.
Indeed, for $\varphi\in W^{1,p}_0(\Omega)$ we have
\begin{eqnarray*}
\langle-\Delta_p(u+\delta v), \varphi\rangle-\langle -\Delta_pu,\varphi\rangle &=&\langle|\nabla(u+\delta v)|^{p-2}\nabla(u+\delta v),\nabla\varphi\rangle-\langle|\nabla u|^{p-2}\nabla u,\nabla\varphi\rangle\\
&=& \langle\left(|\nabla(u+\delta v)|^2\right)^{\frac{p-2}{2}}\nabla(u+\delta v),\nabla\varphi\rangle-\langle|\nabla u|^{p-2}\nabla u,\nabla\varphi\rangle\\
&=& \langle\left(|\nabla u|^{2}+2\delta\langle\nabla u, \nabla v\rangle+\delta^2|\nabla v|^2\right)^{\frac{p-2}{2}}\nabla(u+\delta v),\nabla\varphi\rangle\\ &-& \langle|\nabla u|^{p-2}\nabla u,\nabla\varphi\rangle\\
&=&\langle[|\nabla u|^{p-2}+(p-2)|\nabla u|^{2(\frac{p-2}{2}-1)}\delta\langle\nabla u,\nabla v\rangle+O(\delta^2)]\nabla(u+\delta v),\nabla\varphi\rangle \\ &-& \langle|\nabla u|^{p-2}\nabla u,\nabla\varphi\rangle\\
&=&\langle[|\nabla u|^{p-2}+(p-2)|\nabla u|^{p-4}\delta\langle\nabla u,\nabla v\rangle+O(\delta^2)]\nabla(u+\delta v),\nabla\varphi\rangle \\ &-& \langle|\nabla u|^{p-2}\nabla u,\nabla\varphi\rangle\\
&=&(p-2)\delta |\nabla u|^{p-4}\langle\nabla u,\nabla v\rangle\langle\nabla u,\nabla \varphi\rangle+\delta\langle|\nabla u|^{p-2}\nabla v,\nabla\varphi\rangle+O(\delta^2)\\
&=&\delta[(p-2)|\nabla u|^{p-4}\langle\nabla u,\nabla v\rangle\langle\nabla u,\nabla \varphi\rangle+\langle|\nabla u|^{p-2}\nabla v,\nabla\varphi\rangle+O(\delta)].
\end{eqnarray*}
Define $$\langle B(u)v,\varphi\rangle=(p-2)|\nabla u|^{p-4}\langle\nabla u,\nabla v\rangle\langle\nabla u,\nabla \varphi\rangle+\langle|\nabla u|^{p-2}\nabla v,\nabla\varphi\rangle$$ and let $(u_n)_{n\geq0}\subset W^{1,p}_0(\Omega).$ Assume that $u_n\rightarrow u,$ as $n\rightarrow\infty$ in $W^{1,p}_0(\Omega).$ We have
\begin{eqnarray*}
\langle B(u_n)v-B(u)v,\varphi\rangle &=&(p-2)\left[|\nabla u_n|^{p-4}\langle\nabla u_n,\nabla v\rangle\langle\nabla u_n,\nabla \varphi\rangle-|\nabla u|^{p-4}\langle\nabla u,\nabla v\rangle\langle\nabla u,\nabla \varphi\rangle\right]\\
&+& \langle|\nabla u_n|^{p-2}\nabla v,\nabla\varphi\rangle-\langle|\nabla u|^{p-2}\nabla v,\nabla\varphi\rangle.
\end{eqnarray*}
Therefore,
\begin{eqnarray*}
|\langle B(u_n)v-B(u)v,\varphi\rangle| &\leq(p-2) &\left| |\nabla u_n|^{p-4}\langle\nabla u_n,\nabla v\rangle\langle\nabla u_n,\nabla \varphi\rangle-|\nabla u|^{p-4}\langle\nabla u,\nabla v\rangle\langle\nabla u,\nabla \varphi\rangle\right|\\
&+& \left||\nabla u_n|^{p-2}-|\nabla u|^{p-2}\right||\langle\nabla v,\nabla\varphi\rangle|.
\end{eqnarray*}
By assumption, we can assume that, up to subsequences,
\begin{enumerate}
\item[$(\ast)$] $\nabla u_n\rightarrow\nabla u$ in $\left(L^p(\Omega)\right)^N$ as $n\rightarrow\infty$ and
\item[$(\ast\ast)$]$\nabla u_n(x)\rightarrow\nabla u(x)$ almost everywhere as $n\rightarrow\infty.$
\end{enumerate}
Then $ |\nabla u_n|^{p-4}\langle\nabla u_n,\nabla v\rangle\langle\nabla u_n,\nabla \varphi\rangle\rightarrow |\nabla u|^{p-4}\langle\nabla u,\nabla v\rangle\langle\nabla u,\nabla \varphi\rangle $ and $|\nabla u_n|^{p-2}\rightarrow |\nabla u|^{p-2},$ as $n\rightarrow\infty.$ Consequently $\langle B(u_n)v,\varphi\rangle\rightarrow \langle B(u )v,\varphi\rangle$as $n\rightarrow\infty.$
Thus, we find that $-\Delta_p-\Delta\in C^1$ and thanks to the Inverse function theorem $(-\Delta_p-\Delta)^{-1}$ is differentiable in a neighborhood of $u\in W^{1,p}_0(\Omega)$. Therefore according to the Krasnoselski bifurcation Theorem, we obtain that $\lambda_k^D$ is a bifurcation point at zero.
\end{proof}
\subsection{Bifurcation from infinity : the case $1<p<2$}
\noindent We recall the nonlinear eigenvalue problem we are investigating,
\begin{equation}\label{em1}
\left\{
\begin{array}{l}
-\Delta_p u-\Delta u=\displaystyle \lambda u~~\text{in $\Omega$},\\
u = \displaystyle 0~~~~~~~~~~~~~~~~\text{on $\partial\Omega$}
\end{array}
\right.
\end{equation}
Under a solution of (\ref{em1}) (for $1<p<2$), we understand a pair $(\lambda,u)\in\mathbb{R}^+_{\star}\times W^{1,2}_0(\Omega)$ satisfying the integral equality,
\begin{equation}\label{em2}
\int_{\Omega}|\nabla u|^{p-2}\nabla u\cdot\nabla\varphi~dx+\int_{\Omega}\nabla u\cdot\nabla\varphi~dx=\lambda\int_{\Omega}u\varphi~dx~~\text{for every $\varphi\in W^{1,2}_0(\Omega)$}.
\end{equation}
\begin{definition}
\textup{Let $\lambda\in\mathbb{R}.$ We say that the pair $(\lambda,\infty)$ is a bifurcation point from infinity for problem (\ref{em1}) if there exists a sequence of pairs $\{(\lambda_n,u_n)\}_{n=1}^{\infty}\subset\mathbb{R}\times W^{1,p}_0(\Omega)$ such that equation (\ref{em2}) holds and $(\lambda_n,\|u_n\|_{1,2})\rightarrow(\lambda,\infty).$}
\end{definition}
\noindent We now state  the main theorem.
\begin{theorem}\label{tt1}
\textup{The pair $(\lambda^D_1,\infty)$ is a bifurcation point from infinity for the problem (\ref{em1}).}
\end{theorem}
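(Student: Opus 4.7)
My plan is to deduce the bifurcation from infinity directly from the existence of first eigenfunctions provided by Theorem \ref{th1}. For every $\lambda \in (\lambda^D_1, \infty)$, that theorem produces a Nehari minimizer $u_\lambda \in \mathcal{N}_\lambda \setminus \{0\}$ satisfying \eqref{em2}. The statement will follow once I show that $\|u_{\lambda_n}\|_{1,2} \to \infty$ for every sequence $\lambda_n \to (\lambda^D_1)^+$, for this exhibits the required pairs $(\lambda_n,u_n)$ with $\lambda_n \to \lambda^D_1$ and $\|u_n\|_{1,2} \to \infty$.

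I would prove the blow-up by contradiction. Assume $\|u_n\|_{1,2}$ stays bounded along some sequence $\lambda_n \to (\lambda^D_1)^+$, where $u_n := u_{\lambda_n}$. Up to a subsequence $u_n \rightharpoonup u^*$ in $W^{1,2}_0(\Omega)$ and $u_n \to u^*$ in $L^2(\Omega)$. Taking the liminf in the Nehari identity $\|\nabla u_n\|_p^p + \|\nabla u_n\|_2^2 = \lambda_n \|u_n\|_2^2$ together with the Poincaré inequality $\|\nabla u^*\|_2^2 \geq \lambda^D_1 \|u^*\|_2^2$ forces $\|\nabla u^*\|_p^p \leq 0$, so $u^* \equiv 0$. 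Then $\|u_n\|_2 \to 0$, and the Nehari identity itself yields $u_n \to 0$ strongly in $W^{1,2}_0(\Omega)$.

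Rescaling, I set $v_n := u_n/\|u_n\|_{1,2}$, so that $\|\nabla v_n\|_2 = 1$. Dividing the eigenvalue equation by $\|u_n\|_{1,2}$ produces
$$\|u_n\|_{1,2}^{p-2}(-\Delta_p v_n) + (-\Delta v_n) = \lambda_n v_n,$$
and testing against $v_n$ yields $\|u_n\|_{1,2}^{p-2}\|\nabla v_n\|_p^p = \lambda_n\|v_n\|_2^2 - 1 \in [0,\lambda_n/\lambda^D_1 - 1] \to 0$; in particular $\|v_n\|_2^2 \to 1/\lambda^D_1$. Passing to a further subsequence, $v_n \rightharpoonup v^*$ in $W^{1,2}_0(\Omega)$ and strongly in $L^2(\Omega)$. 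Weak lower semicontinuity gives $\|\nabla v^*\|_2^2 \leq 1$, while $\|v^*\|_2^2 = 1/\lambda^D_1$ and the Rayleigh characterization of $\lambda^D_1$ force $\|\nabla v^*\|_2 = 1$; combined with weak convergence, this upgrades to strong convergence $v_n \to v^* = c e_1$ in $W^{1,2}_0(\Omega)$ with $|c| = 1/\sqrt{\lambda^D_1}$.

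To conclude, I test the rescaled equation against $e_1$. The two linear integrals $\int_\Omega \nabla v_n\cdot\nabla e_1\,dx$ and $\lambda_n\int_\Omega v_n e_1\,dx$ share the common limit $\lambda^D_1 c\,\|e_1\|_2^2$, so
$$\|u_n\|_{1,2}^{p-2}\int_\Omega |\nabla v_n|^{p-2}\nabla v_n\cdot\nabla e_1\,dx \longrightarrow 0.$$
On the other hand, $\nabla v_n \to c\nabla e_1$ strongly in $L^2(\Omega)$, hence in $L^p(\Omega)$, which by continuity of the Nemytski map $a \mapsto |a|^{p-2}a$ from $L^p$ to $L^{p'}$ implies $|\nabla v_n|^{p-2}\nabla v_n \to |c|^{p-2}c\,|\nabla e_1|^{p-2}\nabla e_1$ strongly in $L^{p'}(\Omega)$, so the above integral tends to $|c|^{p-2}c\int_\Omega|\nabla e_1|^p\,dx \neq 0$. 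Since $\|u_n\|_{1,2} \to 0$ and $p<2$ entail $\|u_n\|_{1,2}^{p-2}\to\infty$, the product diverges in absolute value, contradicting the prior limit $0$. The main technical difficulty lies precisely in this last passage to the limit: the strong $W^{1,2}_0$-convergence of $v_n$, crucial for identifying the limit of the nonlinear integral, hinges on the equality case in the Rayleigh quotient and on the Hilbert-space fact that weak convergence together with convergence of norms implies strong convergence.
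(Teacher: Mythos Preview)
Your argument is correct and provides a genuinely different proof from the paper's. The paper proceeds by the change of variables $v = u/\|u\|_{1,2}^{2-\frac{1}{2}p}$, which transforms bifurcation from infinity for \eqref{em1} into bifurcation from zero for the auxiliary problem $-\|v\|_{1,2}^{4-p}\Delta_p v - \Delta v = \lambda v$; it then shows that the operator $T = -\|\cdot\|_{1,2}^{4-p}\Delta_p - \Delta$ is a local homeomorphism near the origin and uses Leray--Schauder degree together with a homotopy $T_\tau = -\Delta - \tau\|\cdot\|_{1,2}^{4-p}\Delta_p$ to conclude that $(\lambda^D_1,0)$ is a bifurcation point for the transformed equation.

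Your route is more elementary and more constructive: you feed the Nehari minimizers already produced in Theorem~\ref{th1} back into the problem and show by a rescaling-and-contradiction argument that their $W^{1,2}_0$-norms must blow up as $\lambda\downarrow\lambda^D_1$. This avoids degree theory entirely and, as a bonus, identifies the bifurcating sequence explicitly as first eigenfunctions (positive or negative). The price is a loss of generality: the paper's degree-theoretic machinery is what allows it to treat the higher eigenvalues $\lambda^D_k$ of odd multiplicity (its Theorem~4.8), where no variational minimizer of Nehari type is available to play the role of your $u_\lambda$. Both approaches ultimately hinge on the same asymptotic picture---after rescaling, the $p$-Laplacian contribution becomes negligible and one is left with the linear eigenvalue problem for $-\Delta$---but you exploit it via strong $W^{1,2}_0$-compactness and the equality case in the Rayleigh quotient, whereas the paper packages it into a homotopy to $(-\Delta)^{-1}$.
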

\noindent For $u\in W^{1,2}_0(\Omega),~u\neq 0,$ we set $v=u/\|u\|_{1,2}^{2-\frac{1}{2}p}$.  We have $\|v\|_{1,2}=\frac{1}{\|u\|_{1,2}^{1-\frac{1}{2}p}}$ and 
$$|\nabla v|^{p-2}\nabla v=\frac{1}{\|u\|_{1,2}^{(2-\frac{1}{2}p)(p-1)}} |\nabla u|^{p-2}\nabla u.$$
Introducing this change of variable in (\ref{em2}), we find that, 
\begin{equation}\label{em3}
\|u\|_{1,2}^{(2-\frac{1}{2}p)(p-2)}\int_{\Omega}|\nabla v|^{p-2}\nabla v\cdot\nabla\varphi~dx+\int_{\Omega}\nabla v\cdot\nabla\varphi~dx=\lambda\int_{\Omega}v\varphi~dx~~\text{for every $\varphi\in W^{1,2}_0(\Omega)$}.
\end{equation}
But, on the other hand, we have
$$\|v\|^{p-4}_{1,2}=\frac{1}{\|u\|_{1,2}^{(1-\frac{1}{2}p)(p-4)}}=\frac{1}{\|u\|_{1,2}^{(2-\frac{1}{2}p)(p-2)}}.$$ Consequently it follows that equation (\ref{em3}) is equivalent to
\begin{equation}\label{em4}
\|v\|_{1,2}^{4-p}\int_{\Omega}|\nabla v|^{p-2}\nabla v\cdot\nabla\varphi~dx+\int_{\Omega}\nabla v\cdot\nabla\varphi~dx=\lambda\int_{\Omega}v\varphi~dx~~\text{for every $\varphi\in W^{1,2}_0(\Omega)$}.
\end{equation}
This leads to the following nonlinear eigenvalue problem (for $1<p<2$)
\begin{equation}\label{em5}
\left\{
\begin{array}{l}
-\|v\|_{1,2}^{4-p}\Delta_p v-\Delta v =\lambda v~~\text{in $\Omega$},\\
v = \displaystyle 0~~~~~~~~~~~~~~~~\text{on $\partial\Omega$}
\end{array}
\right.
\end{equation}
The proof of Theorem \ref{tt1} follows immediately from the following remark, and the proof that $(\lambda_1^D,0)$ is a bifurcation of (\ref{em5}).
\begin{remark}\label{RMK}
\textup{With this transformation, we have that the pair $(\lambda^D_1,\infty)$ is a bifurcation point for the problem (\ref{em1}) if and only if the pair $(\lambda^D_1,0)$ is a bifurcation point for the problem (\ref{em5}).}
\end{remark}
\noindent Let us consider a small ball $B_r(0) :=\{~w~\in W^{1,2}_0(\Omega)/~~~\|w\|_{1,2}< r~\},$ and
consider the operator $$T :=-\|\cdot\|_{1,2}^{4-p}\Delta_p-\Delta : W^{1,2}_0(\Omega)\rightarrow W^{-1,2}(\Omega)$$
\begin{proposition}
\textup{Let $1<p<2.$ There exists $r>0$ such that the mapping \\$T : B_r(0)\subset W^{1,2}_0(\Omega)\rightarrow W^{-1,2}(\Omega)$ is invertible, with a continuous inverse.}
\end{proposition}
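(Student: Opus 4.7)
The plan is to exploit that, for $v$ near $0$ in $W^{1,2}_0(\Omega)$, the nonlinear term $-\|v\|_{1,2}^{4-p}\Delta_p v$ is a higher-order perturbation of the linear Laplacian, so that local strong monotonicity of $T$ on $B_r(0)$ can be inherited from the strong monotonicity of $-\Delta$. This local strong monotonicity will yield both injectivity of $T|_{B_r(0)}$ and a Lipschitz inverse onto the image, hence the desired local homeomorphism onto $T(B_r(0))\subset W^{-1,2}(\Omega)$.

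First, I would establish basic mapping estimates. Since $\Omega$ is bounded and $1<p<2$, H\"older's inequality gives the continuous embedding $W^{1,2}_0(\Omega)\hookrightarrow W^{1,p}_0(\Omega)$, and from this one derives
\begin{equation*}
\|\Delta_p v\|_{W^{-1,2}(\Omega)} \leq C\|v\|_{1,2}^{p-1}, \qquad \|\Delta_p v_1 - \Delta_p v_2\|_{W^{-1,2}(\Omega)} \leq C \|v_1 - v_2\|_{1,2}^{p-1},
\end{equation*}
the latter following from the vectorial inequality $\bigl||a|^{p-2}a - |b|^{p-2}b\bigr| \leq C_p|a-b|^{p-1}$ valid for $1<p<2$. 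In particular $T$ is continuous on $W^{1,2}_0(\Omega)$, and the perturbation has size $\bigl\|\,\|v\|_{1,2}^{4-p}\Delta_p v\bigr\|_{W^{-1,2}(\Omega)} \leq C\|v\|_{1,2}^3$, i.e.\ of order $\|v\|_{1,2}^3$ near the origin.

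Next, for $v_1, v_2 \in B_r(0)$, I would test $Tv_1 - Tv_2$ against $v_1 - v_2$. The $-\Delta$ contribution produces $\|v_1-v_2\|_{1,2}^2$. Splitting
\begin{equation*}
\|v_1\|_{1,2}^{4-p}\Delta_p v_1 - \|v_2\|_{1,2}^{4-p}\Delta_p v_2 = \|v_1\|_{1,2}^{4-p}(\Delta_p v_1 - \Delta_p v_2) + \bigl(\|v_1\|_{1,2}^{4-p} - \|v_2\|_{1,2}^{4-p}\bigr)\Delta_p v_2,
\end{equation*}
the first piece, paired with $v_1 - v_2$, gives a nonnegative contribution by the classical monotonicity of $-\Delta_p$ (multiplied by the nonnegative factor $\|v_1\|_{1,2}^{4-p}$). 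For the cross term, the mean-value theorem applied to $t \mapsto t^{4-p}$ gives $\bigl|\|v_1\|_{1,2}^{4-p} - \|v_2\|_{1,2}^{4-p}\bigr| \leq (4-p) r^{3-p}\|v_1-v_2\|_{1,2}$, which combined with $\|\Delta_p v_2\|_{W^{-1,2}} \leq Cr^{p-1}$ contributes at most $C' r^2\|v_1-v_2\|_{1,2}^2$. Altogether
\begin{equation*}
\langle T v_1 - T v_2,\, v_1 - v_2 \rangle \geq (1 - C'r^2)\,\|v_1-v_2\|_{1,2}^2.
\end{equation*}

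Choosing $r>0$ so small that $1 - C'r^2 > 0$, the duality inequality $\|Tv_1-Tv_2\|_{W^{-1,2}}\,\|v_1-v_2\|_{1,2} \geq \langle Tv_1-Tv_2, v_1-v_2\rangle$ then yields $\|Tv_1 - Tv_2\|_{W^{-1,2}} \geq (1 - C'r^2)\|v_1-v_2\|_{1,2}$. Hence $T|_{B_r(0)}$ is injective and its inverse $T^{-1}\colon T(B_r(0)) \to B_r(0)$ is Lipschitz continuous with constant $(1 - C'r^2)^{-1}$; combined with the continuity of $T$, this delivers the claimed local homeomorphism. I expect the main obstacle to be the control of the cross term: it is what could have destroyed the strong monotonicity inherited from $-\Delta$, and one must check carefully that the exponents combine as $r^{3-p}\cdot r^{p-1} = r^2$, which vanishes as $r\to 0$ and is therefore absorbed by the Laplacian's contribution; note in particular that the Banach contraction scheme fails directly here because $\Delta_p$ is only H\"older (not Lipschitz) continuous for $p<2$, which is why the monotonicity route is preferable.
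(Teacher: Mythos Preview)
Your proof is correct and follows essentially the same route as the paper: you split the nonlinear perturbation exactly as the paper does, use the monotonicity of $-\Delta_p$ to discard the first piece, and control the cross term via the mean-value theorem together with the bound $\|\Delta_p v_2\|_{W^{-1,2}}\le C r^{p-1}$, arriving at the same inequality $\langle Tv_1-Tv_2,v_1-v_2\rangle\ge (1-C'r^2)\|v_1-v_2\|_{1,2}^2$. The only cosmetic difference is that the paper applies the mean-value theorem to $t\mapsto\|u+t(v-u)\|_{1,2}^{4-p}$ rather than to $t\mapsto t^{4-p}$, and then invokes a reference for the passage from strong monotonicity to invertibility, whereas you spell out the duality step directly.
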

\begin{proof}
In order to prove that the operator $T$ is invertible with a continuous inverse, we again rely on  [ \cite{KC}, Corollary 2.5.10]. 
We show that there exists $\delta>0$ such that $$\langle T(u)-T(v), u-v\rangle\geq \delta\|u-v\|^2_{1,2}, ~~\text{for}~~u,v\in B_r(0)\subset W^{1,2}_0(\Omega)$$ with $r>0$ sufficiently small.\\
Indeed, using that $-\Delta_p$ is strongly monotone on $W^{1,p}_0(\Omega)$ on the one hand and the H\"older inequality on the other hand, we have
\begin{eqnarray}\label{bf1}\notag
\langle T(u)-T(v), u-v\rangle&=&\|\nabla u-\nabla v\|^2_{2}+\left(\|u\|_{1,2}^{4-p}(-\Delta_pu)-\|v\|_{1,2}^{4-p}(-\Delta_pv), u-v \right)\\ \notag
&=&\|u-v\|^2_{1,2}+\|u\|_{1,2}^{4-p}\left((-\Delta_pu)-(-\Delta_p v), u-v\right)\\ \notag
&+& \left(\|u\|_{1,2}^{4-p}-\|v\|_{1,2}^{4-p} \right)\left(-\Delta_pv, u-v\right)\\ \notag
&\geq & \|u-v\|^2_{1,2}-\left|\|u\|_{1,2}^{4-p}-\|v\|_{1,2}^{4-p}\right|\|\nabla v\|_{p}^{p-1}\|\nabla (u-v)\|_{p}\\
&\geq& \|u-v\|^2_{1,2}-\left|\|u\|_{1,2}^{4-p}-\|v\|_{1,2}^{4-p}\right|C\| v\|_{1,2}^{p-1}\| u-v\|_{1,2}.
\end{eqnarray}
Now, we obtain by the Mean Value Theorem  that there exists $\theta\in [0,1]$ such that
\begin{eqnarray*}
\left|\|u\|_{1,2}^{4-p}-\|v\|_{1,2}^{4-p}\right|&=&\left|\frac{d}{dt}\left(\|u+t(v-u)\|^2_{1,2}\right)^{2-\frac{1}{2}p}|_{t=\theta} (v-u)\right|\\
&=&\left|(2-\frac{1}{2}p)\left(\|u+\theta(v-u)\|^2_{1,2}\right)^{1-\frac{1}{2}p}2\left(u+\theta(v-u),v-u\right)_{1,2}\right|\\
&\leq& (4-p)\|u+\theta(v-u)\|^{2-p}_{1,2}\|u+\theta(v-u)\|_{1,2}\|u-v\|_{1,2}\\
&=&(4-p)\|u+\theta(v-u)\|_{1,2}^{3-p}\|u-v\|_{1,2}\\
&\leq & (4-p)\left((1-\theta)\|u\|_{1,2}+\theta\|v\|_{1,2}\right)^{3-p}\|u-v\|_{1,2}\\
&\leq & (4-p) r^{3-p}\|u-v\|_{1,2}.
\end{eqnarray*}
Hence, continuing with the estimate of equation (\ref{bf1}), we get
$$\langle T(u)-T(v), u-v\rangle\geq \|u-v\|^2_{1,2}(1-(4-p) r^{3-p}Cr^{p-1})=\|u-v\|^2_{1,2}(1-C'r^{2}),$$ and thus the claim, for $r>0$ small enough.\\ Hence, the operator $T$ is strongly monotone on $B_r(0)$ and it is continuous, and hence the claim follows.
\end{proof}
\noindent Clearly the mappings
$$T_{\tau}=-\Delta-\tau\|\cdot\|^{\gamma}_{1,2}\Delta_p : B_r(0)\subset W^{1,2}_0(\Omega)\rightarrow W^{-1,2}(\Omega),~~~0\leq \tau\leq 1 $$ are also local homeomorphisms for $1<p<2$ with $\gamma=4-p>0$. Consider now the homotopy maps $$H(\tau, y):=(-\tau\|\cdot\|_{1,2}^{\gamma}\Delta_p-\Delta)^{-1}(y),~~~y\in T_{\tau}(B_r(0))\subset W^{-1,2}(\Omega).$$ Then we can find a $\rho>0$ such that the ball $$B_{\rho}(0)\subset \bigcap_{0\leq \tau\leq 1} T_{\tau}(B_r(0))$$ and $$H(\tau,\cdot) : B_{\rho}(0)\cap L^2(\Omega)\mapsto W^{1,2}_0(\Omega)\subset\subset L^2(\Omega)$$ are compact mappings.
Set now $$\tilde{S}_{\lambda}(u)=u-\lambda (-\|u\|_{1,2}^{\gamma}\Delta_p-\Delta)^{-1}u.$$ Notice that $\tilde{S}_{\lambda}$ is a compact perturbation of the identity in $L^2(\Omega).$
We have $0\notin H([0,1]\times\partial B_r(0) ).$ So it makes sense to consider the Leray-Schauder topological degree of $H(\tau,\cdot)$ on $B_r(0).$ And by the property of the invariance by homotopy, one has
\begin{equation}\label{eqa1}
\deg(H(0,\cdot),B_r(0),0)=\deg(H(1,\cdot),B_r(0),0).
\end{equation}
\begin{theorem}\label{tt}
\textup{The pair $(\lambda^D_1,0)$ is a bifurcation point in $\mathbb{R}^+\times L^2(\Omega)$ of $\tilde{S}_{\lambda}(u)=0$, for $1<p<2.$}
\end{theorem}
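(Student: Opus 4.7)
The plan is to apply Leray--Schauder degree theory to the compact perturbation $\tilde{S}_\lambda$ of the identity in $L^2(\Omega)$, exploiting the homotopy $K_\tau(u) := u - \lambda H(\tau,u)$ that connects $K_0(u) = u - \lambda(-\Delta)^{-1}u$ with $K_1(u) = \tilde{S}_\lambda(u)$, and then using the fact that $\lambda^D_1$ is a simple eigenvalue of $-\Delta$ (established in Section \ref{S4}) to detect a sign change in the degree. I argue by contradiction: assume $(\lambda^D_1,0)$ is \emph{not} a bifurcation point, so that there exist $\epsilon>0$ and $\rho_0>0$ with $\tilde{S}_\lambda(u)\neq 0$ whenever $|\lambda-\lambda^D_1|<\epsilon$ and $0<\|u\|_{L^2}\leq \rho_0$.

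The key step is to upgrade this to a statement uniform along the homotopy: after possibly shrinking $\epsilon$ and $\rho_0$, one has $K_\tau(u)\neq 0$ for all $\tau\in[0,1]$, all $\lambda\in(\lambda^D_1-\epsilon,\lambda^D_1+\epsilon)\setminus\{\lambda^D_1\}$ and all $u\in\partial B_{\rho_0}(0)$. Assuming the failure yields sequences $\tau_n\in[0,1]$, $\lambda_n\to\lambda^\star\in[\lambda^D_1-\epsilon,\lambda^D_1+\epsilon]\setminus\{\lambda^D_1\}$ and $u_n\to 0$ in $L^2$ with $u_n\neq 0$ solving $-\tau_n\|u_n\|^{4-p}_{1,2}\Delta_p u_n - \Delta u_n = \lambda_n u_n$. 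Setting $v_n:=u_n/\|u_n\|_{1,2}$ and using the $(p-1)$-homogeneity of $\Delta_p$ (so that $\|u_n\|^{4-p}_{1,2}\Delta_p u_n = \|u_n\|^2_{1,2}\Delta_p v_n$) gives
\[
-\tau_n\|u_n\|^2_{1,2}\Delta_p v_n - \Delta v_n = \lambda_n v_n, \qquad \|v_n\|_{1,2}=1.
\]
Since $W^{1,2}_0 \embed W^{1,p}_0$ for $1<p<2$, the nonlinear term is $O(\|u_n\|^2_{1,2})$ in $W^{-1,2}$ and hence vanishes. Extracting $v_n\weakto v$ in $W^{1,2}_0$ and $v_n\to v$ in $L^2$, and testing the equation against $v_n$ itself, I obtain $\lim \lambda_n\|v_n\|^2_{L^2} = 1$, so $\|v\|^2_{L^2} = 1/\lambda^\star>0$ and in particular $v\neq 0$. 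Passing to the weak limit, $-\Delta v = \lambda^\star v$, forcing $\lambda^\star$ to be a Dirichlet eigenvalue of $-\Delta$. Choosing $\epsilon < \min\{\lambda^D_1,\lambda^D_2-\lambda^D_1\}$ isolates $\lambda^\star=\lambda^D_1$, a contradiction.

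With this uniform statement secured, homotopy invariance of the Leray--Schauder degree gives $\deg(\tilde{S}_\lambda, B_{\rho_0},0) = \deg(K_0, B_{\rho_0},0)$ for each $\lambda\in(\lambda^D_1-\epsilon,\lambda^D_1+\epsilon)\setminus\{\lambda^D_1\}$. By the classical Leray--Schauder formula together with the simplicity of $\lambda^D_1$ and the choice of $\epsilon$, the right-hand side equals $+1$ for $\lambda<\lambda^D_1$ and $-1$ for $\lambda>\lambda^D_1$. On the other hand, the non-bifurcation assumption forces $\tilde{S}_\lambda$ to have no zero on $\overline{B_{\rho_0}}\setminus\{0\}$ for \emph{every} $\lambda$ in the whole interval, including $\lambda=\lambda^D_1$; consequently, $\lambda\mapsto \deg(\tilde{S}_\lambda,B_{\rho_0},0)$ must be constant on $(\lambda^D_1-\epsilon,\lambda^D_1+\epsilon)$, contradicting the jump from $+1$ to $-1$. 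Hence $(\lambda^D_1,0)$ is a bifurcation point.

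The main obstacle will be the uniform homotopy estimate: the renormalization argument must simultaneously (i) dispose of the $(p-1)$-homogeneous term $\|u_n\|^2_{1,2}\Delta_p v_n$ using the embedding $W^{1,2}_0\embed W^{1,p}_0$, and (ii) produce a non-trivial weak limit $v$ solving the linear eigenvalue problem. This is precisely the point where the restriction $1<p<2$ plays a decisive role, since it is what makes the $p$-Laplacian a genuinely lower-order perturbation of $-\Delta$ in the small-$u$ regime and guarantees the correct scaling exponent $\gamma=4-p$.
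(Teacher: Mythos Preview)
Your approach is the same as the paper's: Leray--Schauder degree, the homotopy $H(\tau,\cdot)$ connecting $(-\Delta)^{-1}$ to the full nonlinear inverse, and a renormalization argument to verify that the homotopy does not vanish on small spheres. There is, however, a logical slip in your ``key step''. You claim the existence of a \emph{single} radius $\rho_0$ valid for \emph{all} $\lambda\in(\lambda^D_1-\epsilon,\lambda^D_1+\epsilon)\setminus\{\lambda^D_1\}$; negating this yields sequences $\lambda_n$ that individually avoid $\lambda^D_1$, but nothing prevents their limit $\lambda^\star$ from being $\lambda^D_1$ itself --- your assertion $\lambda^\star\in[\lambda^D_1-\epsilon,\lambda^D_1+\epsilon]\setminus\{\lambda^D_1\}$ is unjustified. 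The renormalization then only yields $-\Delta v=\lambda^\star v$ with $v\neq 0$, forcing $\lambda^\star$ to be an eigenvalue; if $\lambda^\star=\lambda^D_1$ this is no contradiction at all.

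The fix is easy and is precisely what the paper does: treat the two sides separately, with $\lambda$ \emph{fixed}. For $\lambda=\lambda^D_1-\varepsilon$ the paper does not even need renormalization --- testing the equation against $v$ gives $(\lambda^D_1-\varepsilon)\|v\|_2^2\geq\|\nabla v\|_2^2\geq\lambda^D_1\|v\|_2^2$, a direct contradiction valid on \emph{every} sphere. For $\lambda=\lambda^D_1+\varepsilon$ the paper runs your renormalization with this fixed $\lambda$, so that $\lambda^\star=\lambda^D_1+\varepsilon$ is automatically not an eigenvalue and the contradiction is genuine. With these two pieces, homotopy invariance gives $\deg(\tilde{S}_{\lambda^D_1-\varepsilon},B_r,0)=+1$ and $\deg(\tilde{S}_{\lambda^D_1+\varepsilon},B_r,0)=-1$, and the contradiction with the constancy of the degree (from the non-bifurcation assumption) follows exactly as you outline. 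A further minor difference: the paper obtains the nontriviality of the limit $v$ via the \emph{strong} convergence $v_n\to v$ in $W^{1,2}_0$ (using that $-\Delta:W^{1,2}_0\to W^{-1,2}$ is a homeomorphism), rather than through the $L^2$-norm computation you propose; both arguments are valid.
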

\begin{proof}
Suppose by contradiction that $(\lambda^D_1,0)$ is not a bifurcation for $\tilde{S}_{\lambda}.$ Then, there exist $\delta_0>0$ such that for all $r\in (0,\delta_0)$ and $\varepsilon\in (0,\delta_0),$
\begin{equation}\label{eqaa2}
\tilde{S}_{\lambda}(u)\neq 0~~~\forall~|\lambda^D_1-\lambda|\leq \varepsilon,~\forall~u\in L^2(\Omega),~\|u\|_{2}=r.
\end{equation}
Taking into account that (\ref{eqaa2}) holds, it follows that it make sense to consider the Leray-Schauder topological degree $\deg(\tilde{S}_{\lambda}, B_r(0), 0)$ of $\tilde{S}_{\lambda}$ on $B_r(0).$\\
\\
We observe that 
\begin{equation}\label{zoo1}
\left(I-(\lambda_1^D-\varepsilon)H(\tau,\cdot)\right)|_{\partial B_r(0)}\neq 0~~\text{for}~~ \tau\in [0,1].
\end{equation}

\noindent Proving (\ref{zoo1}) garantee the well posedness of $\deg(I-(\lambda^D_1\pm\varepsilon)H(\tau,\cdot), B_r(0), 0)$ for any $\tau\in [0,1].$
\noindent Indeed, by contradiction suppose that there exists $v\in \partial B_r(0)\subset L^2(\Omega)$ such that \\$v-(\lambda_1^D-\varepsilon)H(\tau,v)=0,$ for some $\tau\in [0,1].$\\
One concludes that then $v\in W^{1,2}_0(\Omega),$ and then that
$$-\Delta v-\tau\|v\|^{\gamma}_{1,2}\Delta_pv=(\lambda^D_1-\varepsilon)v.$$ However, we get the contradiction,
$$(\lambda^D_1-\varepsilon)\|v\|_2^2=\|\nabla v\|^2_2+\tau\| v\|^{\gamma}_{1,2}\|\nabla v\|^p_p\geq \|\nabla v\|^2_2\geq \lambda^D_1\|v\|^2_2.$$
By the contradiction assumption, we have
\begin{equation}\label{eqaa3}
\deg(I-(\lambda^D_1+\varepsilon)H(1,\cdot), B_r(0), 0)=\deg(I-(\lambda^D_1-\varepsilon)H(1,\cdot), B_r(0), 0).
\end{equation}
By homotopy using (\ref{eqa1}), we have
\begin{eqnarray}\label{c1a}
\deg(I-(\lambda^D_1-\varepsilon)H(1,\cdot), B_r(0), 0)&=&\deg(I-(\lambda^D_1-\varepsilon)H(0,\cdot), B_r(0), 0)\\
&=&\deg(I-(\lambda^D_1-\varepsilon)(-\Delta)^{-1}, B_r(0), 0)=1\notag
\end{eqnarray}
Now, using (\ref{c1a}) and (\ref{eqaa3}), we find that
\begin{equation}\label{cc2a}
\deg(I-(\lambda^D_1+\varepsilon)H(1,\cdot), B_r(0), 0)=\deg(I-(\lambda^D_1-\varepsilon)H(0,\cdot), B_r(0), 0)=1
\end{equation}

\noindent Furthermore, since $\lambda^D_1$ is a simple eigenvalue of $-\Delta,$ it is well-known [see \cite{AM}] that
\begin{equation}\label{zo2}
\deg(I-(\lambda^D_1+\varepsilon)(-\Delta)^{-1}, B_r(0), 0)=\deg(I-(\lambda^D_1+\varepsilon)H(0,\cdot), B_r(0), 0)=-1
\end{equation}
In order to get contradiction (to relation  (\ref{cc2a})), it is enough to show that, 
\\
\begin{equation}\label{c3}
\deg(I-(\lambda^D_1+\varepsilon)H(1,\cdot), B_r(0), 0)=\deg(I-(\lambda^D_1+\varepsilon)H(0,\cdot), B_r(0), 0),
\end{equation}
$r>0$ sufficiently small.
We have to show that $$\left(I-(\lambda_1^D+\varepsilon)H(\tau,\cdot)\right)|_{\partial B_r(0)}\neq 0~~~\text{for}~~\tau\in[0,1].$$ 
Suppose by contradiction that there is $r_n\rightarrow0$, $\tau_n\in [0,1]$ and $u_n\in \partial B_{r_n}(0)$ such that
$$u_n-(\lambda_1^D+\varepsilon)H(\tau_n,u_n)=0$$ or equivalently
\begin{equation}\label{zoz1}
-\tau_n\|u_n\|^{\gamma}_{1,2}\Delta_p u_n-\Delta u_n=(\lambda_1^D+\varepsilon)u_n.
\end{equation}
Dividing the equation (\ref{zoz1}) by $\|u_n\|_{1,2}$, we obtain
$$-\tau_n\|u_n\|^{\gamma+p-1}_{1,2}\Delta_p\left(\frac{u_n}{\|u_n\|_{1,2}}\right)-\Delta\left(\frac{u_n}{\|u_n\|_{1,2}}\right)=(\lambda_1^D+\varepsilon)\frac{u_n}{\|u_n\|_{1,2}},$$ and by setting $v_n=\frac{u_n}{\|u_n\|_{1,2}},$ it follows that
\begin{equation}\label{zoz2}
-\tau_n\|u_n\|^{\gamma+p-1}_{1,2}\Delta_p v_n-\Delta v_n=(\lambda_1^D+\varepsilon)v_n.
\end{equation}
But since $\|v_n\|_{1,2}=1,$ we have $v_n\rightharpoonup v$ in $W^{1,2}_0(\Omega)$ and $v_n\rightarrow v$ in $L^2(\Omega).$ Furthermore, the first term in the left hand side of equation (\ref{zoz2}) tends to zero in $W^{-1,p'}(\Omega)$ as $r_n\rightarrow 0$ and hence in $W^{-1,2}(\Omega).$ Equation (\ref{zoz1}) then implies that $v_n\rightarrow v$  strongly in $W^{1,2}_0(\Omega)$ since $-\Delta: W^{1,2}_0(\Omega)\rightarrow 	W^{-1,2}(\Omega)$ is a homeomorphism and thus $v$ with $\|v\|_{1,2}=1$ solves $-\Delta v=(\lambda_1^D+\varepsilon)v$, which is impossible because $\lambda_1^D+\varepsilon$ is not the first eigenvalue of $-\Delta$ on $W^{1,2}_0(\Omega)$ for $\varepsilon>0.$\\
\\
Therefore, by homotopy it follows that
$$\deg(I-(\lambda^D_1+\varepsilon)H(1,\cdot), B_r(0), 0)=\deg(I-(\lambda^D_1+\varepsilon)H(0,\cdot), B_r(0), 0).$$ Now, thanks to (\ref{zo2}), we find that $$\deg(I-(\lambda^D_1+\varepsilon)H(1,\cdot), B_r(0), 0)=-1,$$  which contradicts equation (\ref{cc2a}).
\end{proof}
\begin{theorem}\label{tt}
\textup{The pair $(\lambda^D_k,0)$ ($k>1$) is a bifurcation point of $\tilde{S}_{\lambda}(u)=0$, for $1<p<2$ if $\lambda^D_k$ is of odd multiplicity.}
\end{theorem}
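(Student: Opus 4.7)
The plan is to mimic the strategy of the preceding theorem (the case $k=1$), replacing the simplicity of $\lambda_1^D$ with the classical jump of the Leray--Schauder degree of $I - \lambda(-\Delta)^{-1}$ across a characteristic value of odd algebraic multiplicity. I would argue by contradiction: suppose $(\lambda_k^D, 0)$ is not a bifurcation point of $\tilde S_\lambda$. Then there exists $\delta_0 > 0$ such that for all $r, \varepsilon \in (0, \delta_0)$,
\begin{equation*}
\tilde S_\lambda(u) \ne 0 \quad \text{for all } |\lambda - \lambda_k^D| \le \varepsilon \text{ and all } u \in L^2(\Omega) \text{ with } \|u\|_2 = r.
\end{equation*}
Shrinking $\varepsilon$ I may also assume that $\lambda_k^D$ is the only Dirichlet eigenvalue of $-\Delta$ in the closed interval $[\lambda_k^D - \varepsilon, \lambda_k^D + \varepsilon]$. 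The contradiction hypothesis allows a linear homotopy of the parameter from $\lambda_k^D - \varepsilon$ to $\lambda_k^D + \varepsilon$ at the level of the Leray--Schauder degree, so
\begin{equation*}
\deg\bigl(I - (\lambda_k^D - \varepsilon) H(1, \cdot), B_r(0), 0\bigr) = \deg\bigl(I - (\lambda_k^D + \varepsilon) H(1, \cdot), B_r(0), 0\bigr),
\end{equation*}
and my aim becomes to contradict this identity.

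Next I would establish admissibility of the $\tau$-homotopy $H(\tau, \cdot) = (-\tau \|\cdot\|_{1,2}^\gamma \Delta_p - \Delta)^{-1}$ on $\partial B_r(0)$ for both endpoints $\lambda = \lambda_k^D \pm \varepsilon$ and every $\tau \in [0,1]$, when $r$ is small. In the $k=1$ proof the author disposed of $\lambda_1^D - \varepsilon$ by a direct variational inequality (since it sits below the first eigenvalue of $-\Delta$), but this shortcut is unavailable for $k > 1$. I would therefore use the rescaling argument applied to the ``$+\varepsilon$'' side of the $k=1$ proof on both endpoints: assuming $\tau_n \in [0,1]$ and $u_n \in \partial B_{r_n}(0) \subset L^2(\Omega)$ with $r_n \to 0$ satisfy $-\tau_n \|u_n\|_{1,2}^\gamma \Delta_p u_n - \Delta u_n = (\lambda_k^D \pm \varepsilon) u_n$, testing with $u_n$ gives $\|u_n\|_{1,2} \to 0$; then $v_n := u_n / \|u_n\|_{1,2}$ has $\|v_n\|_{1,2} = 1$ and satisfies
\begin{equation*}
-\tau_n \|u_n\|_{1,2}^{\gamma + p - 1} \Delta_p v_n - \Delta v_n = (\lambda_k^D \pm \varepsilon) v_n.
\end{equation*}
Because $\gamma + p - 1 = 3 > 0$, the first term vanishes in $W^{-1,2}(\Omega)$; since $-\Delta : W^{1,2}_0(\Omega) \to W^{-1,2}(\Omega)$ is a homeomorphism, $v_n$ converges strongly to some $v$ with $\|v\|_{1,2} = 1$ solving $-\Delta v = (\lambda_k^D \pm \varepsilon) v$, contradicting that $\lambda_k^D \pm \varepsilon$ is not a Dirichlet eigenvalue of $-\Delta$.

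With admissibility established, homotopy invariance in $\tau$ gives
\begin{equation*}
\deg\bigl(I - (\lambda_k^D \pm \varepsilon) H(1, \cdot), B_r(0), 0\bigr) = \deg\bigl(I - (\lambda_k^D \pm \varepsilon)(-\Delta)^{-1}, B_r(0), 0\bigr),
\end{equation*}
and the classical Leray--Schauder formula for a linear compact operator (see \cite{AM}) computes these linear degrees as $(-1)^{\beta(\lambda)}$, where $\beta(\lambda)$ is the sum of the algebraic multiplicities of the Dirichlet eigenvalues of $-\Delta$ strictly smaller than $\lambda$. Since the multiplicity $m_k$ of $\lambda_k^D$ is odd, the degrees at $\lambda_k^D - \varepsilon$ and $\lambda_k^D + \varepsilon$ differ by the factor $(-1)^{m_k} = -1$, contradicting the identity forced by the contradiction hypothesis. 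The main obstacle in this plan is precisely the admissibility of the $\tau$-homotopy on both sides of $\lambda_k^D$, since the variational shortcut used in the $k=1$ proof is gone; the rescaling scheme saves the day because the exponent $\gamma + p - 1 = 3$ is strictly positive and therefore forces the $p$-Laplacian perturbation to drop out in the limit, reducing admissibility to the assertion that $\lambda_k^D \pm \varepsilon$ is not an eigenvalue of $-\Delta$. Once this is in place, the remainder of the proof is formal bookkeeping via the linear degree formula.
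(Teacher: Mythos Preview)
Your argument is correct and reaches the same conclusion as the paper, but the route to admissibility of the $\tau$-homotopy is genuinely different. The paper establishes $(I-(\lambda_k^D-\varepsilon)H(\tau,\cdot))|_{\partial B_r(0)}\neq 0$ by a projection argument: writing $v=P^+v+P^-v$ with $P^-$ and $P^+$ the spectral projections onto $\mathrm{span}\{e_1,\dots,e_{k-1}\}$ and its orthogonal complement, testing the equation against $P^+v-P^-v$, and exploiting the sign-definiteness of $-\Delta-(\lambda_k^D-\varepsilon)$ on each block together with a H\"older estimate on the $p$-Laplacian term; this yields a quantitative smallness condition $1\le C''r^{3}$ on $r$. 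You instead run the same rescaling/blow-up argument used for $\lambda_1^D+\varepsilon$ in the $k=1$ proof on \emph{both} endpoints $\lambda_k^D\pm\varepsilon$, reducing admissibility to the single fact that neither of these is a Dirichlet eigenvalue of $-\Delta$. Your route is shorter and treats the two sides symmetrically, whereas the paper's projection computation gives an explicit radius without passing to subsequences. A minor slip you inherit from the paper: after dividing by $\|u_n\|_{1,2}$ the exponent on the $p$-Laplacian term is $\gamma+p-2=2$, not $\gamma+p-1=3$; this is harmless since the exponent is still positive.
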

\begin{proof}
Suppose by contradiction that $(\lambda^D_k,0)$ is not a bifurcation for $\tilde{S}_{\lambda}.$ Then, there exist $\delta_0>0$ such that for all $r\in (0,\delta_0)$ and $\varepsilon\in (0,\delta_0),$
\begin{equation}\label{eqa2}
\tilde{S}_{\lambda}(u)\neq 0~~~\forall~|\lambda^D_k-\lambda|\leq \varepsilon,~\forall~u\in L^2(\Omega),~\|u\|_{2}=r.
\end{equation}
Taking into account that (\ref{eqa2}) holds, it follows that it make sense to consider the Leray-Schauder topological degree $\deg(\tilde{S}_{\lambda}, B_r(0), 0)$ of $\tilde{S}_{\lambda}$ on $B_r(0).$\\
We show that 
\begin{equation}\label{zo1}
\left(I-(\lambda_k^D-\varepsilon)H(\tau,\cdot)\right)|_{\partial B_r(0)}\neq 0~~\text{for}~~ \tau\in [0,1].
\end{equation}
Proving (\ref{zo1}) garantee the well posedness of $\deg(I-(\lambda^D_k\pm\varepsilon)H(\tau,\cdot), B_r(0), 0)$ for any $\tau\in [0,1].$ Indeed, consider the projections $P^-$ and $P^+$ onto the spaces $\text{span}\{e_1,\dots,e_{k-1}\}$ and $\text{span}\{e_k,e_{k+1},\dots\} $, respectively, where $e_1\dots,e_k,e_{k+1},\dots$denote the eigenfunctions associated to the Dirichlet problem (\ref{e1}).\\
Suppose by contradiction that relation (\ref{zo1}) does not hold. Then there exists $v\in \partial B_r(0)\subset L^2(\Omega)$ such that $v-(\lambda^D_k-\varepsilon)H(\tau,v)=0,$ for some $\tau\in [0,1].$ This is equivalent of having 
\begin{equation}\label{ber1}
-\Delta v-(\lambda^D_k-\varepsilon)v=\tau\|v\|^{\gamma}_{1,2}\Delta_p v.
\end{equation}

Replacing $v$ by $P^+v+P^-v,$ and multiplying equation (\ref{ber1}) by $P^+v-P^-v$ in the both sides, we obtain
$$\langle [-\Delta-(\lambda^D_k-\varepsilon)](P^+v+P^-v),P^+v-P^-v\rangle=\tau\|P^+v+P^-v\|^{\gamma}_{1,2}\langle\Delta_p[P^+v+P^-v], P^+v-P^-v\rangle$$
$$\Updownarrow$$
\begin{eqnarray*}
-\left[\|\nabla P^-v\|^2_2-(\lambda^D_k-\varepsilon)\|P^-v\|^2_2\right]+\|\nabla P^+v\|^2_2-(\lambda^D_k-\varepsilon)\|P^+v\|^2_2 &= &\tau\|P^+v+P^-v\|^{\gamma}_{1,2}\\ 
&\times & \langle\Delta_p[P^+v+P^-v], P^+v-P^-v\rangle.
\end{eqnarray*}
But $$\langle\Delta_p[P^+v+P^-v], P^+v-P^-v\rangle=-\int_{\Omega}|\nabla (P^+v+P^-v)|^{p-2}\nabla (P^+v+P^-v)\cdot \nabla (P^+v-P^-v)~dx,$$ and using the H\"older inequality, the embedding $W^{1,2}_0(\Omega)\subset W^{1,p}_0(\Omega)$ and the fact that $P^+v$ and $P^-v$ don't vanish simultaneously, there is some positive constant $C'>0$ such that \\
$\|P^+v-P^-v\|_{1,2}\leq C'(\|P^+v\|^2_{1,2}+\|P^-v\|^2_{1,2})=C'\|P^+v-P^-v\|^2_{1,2},~~\text{since}~\left(P^+v,P^-v\right)_{1,2}=0,$  we have
\begin{eqnarray*}
\left|\langle\Delta_p[P^+v+P^-v], P^+v-P^-v\rangle\right | &\leq & \|P^+v+P^-v\|^{p-1}_{1,p}\|P^+v-P^-v\|_{1,p}\\
&\leq & C'\|P^+v+P^-v\|^{p-1}_{1,2}\|P^+v-P^-v\|^{2}_{1,2}\\
&\leq& C'\|P^+v+P^-v\|^{p+1}_{1,2},~~\text{since}~~\|P^+v-P^-v\|^{2}_{1,2}=\|P^+v+P^-v\|^{2}_{1,2}.
\end{eqnarray*}

On the other hand, thanks to the Poincaré inequality as well as the variational characterization of eigenvalues we find $$-\left[\|\nabla P^-v\|^2_2-(\lambda^D_k-\varepsilon)\|P^-v\|^2_2\right]\geq 0$$ and $$\|\nabla P^+v\|^2_2-(\lambda^D_k-\varepsilon)\|P^+v\|^2_2\geq 0,$$  we can bound from below these two inequalities together by $\|\nabla P^+v\|^2_{2}+\|\nabla P^-v\|^2_{2}.$ \\
Finally, we have 
$$\|v\|^2_{1,2}=\|\nabla P^+v\|^2_{2}+\|\nabla P^-v\|^2_{2}\leq \tau C'\|P^+v+P^-v\|^{\gamma+p+1}_{1,2},~~\text{with}~~\gamma=4-p,$$
$$\Updownarrow$$
$$\|v\|^2_{1,2}\leq C''\|v\|^{\gamma+p+1}_{1,2}\Leftrightarrow 1\leq C'' r^{3}\rightarrow 0,$$ for $r$ taken small enough. This shows that (\ref{zo1}) holds.\\
\noindent By the contradiction assumption, we have
\begin{equation}\label{eqa3}
\deg(I-(\lambda^D_k+\varepsilon)H(1,\cdot), B_r(0), 0)=\deg(I-(\lambda^D_k-\varepsilon)H(1,\cdot), B_r(0), 0).
\end{equation}
By homotopy using (\ref{zo1}), we have
\begin{eqnarray}\label{c1}
\deg(I-(\lambda^D_k-\varepsilon)H(1,\cdot), B_r(0), 0)&=&\deg(I-(\lambda^D_k-\varepsilon)H(0,\cdot), B_r(0), 0)\\
&=&\deg(I-(\lambda^D_k-\varepsilon)(-\Delta)^{-1}, B_r(0), 0)=(-1)^{\beta}\notag,
\end{eqnarray}
where $\beta$ is the sum of algebraic multiplicities of the  eigenvalues $\lambda_k^D-\varepsilon< \lambda.$
Similarly, if $\beta'$ denotes the sum of the algebraic multiplicities of the characteristic values of $(-\Delta)^{-1}$ such that $\lambda>\lambda^D_k+\varepsilon,$ then 
\begin{equation}\label{cpl}
\deg(I-(\lambda^D_k+\varepsilon)H(1,\cdot), B_r(0), 0)=(-1)^{\beta'}
\end{equation}
But since $[\lambda^D_k-\varepsilon,\lambda^D_k+\varepsilon]$ contains only the eigenvalue $\lambda^D_k,$ it follows that $\beta'=\beta+\alpha,$ where $\alpha$ denotes the algebraic multiplicity of $\lambda^D_k.$ Consequently, we have
\begin{eqnarray*}
\deg(I-(\lambda^D_k+\varepsilon)H(1,\cdot), B_r(0), 0)&=&(-1)^{\beta+\alpha}\\
&=&(-1)^{\alpha}\deg(I-(\lambda^D_k+\varepsilon)H(1,\cdot), B_r(0), 0)\\
&=& -\deg(I-(\lambda^D_k+\varepsilon)H(1,\cdot), B_r(0), 0),
\end{eqnarray*}
since $\lambda^D_k$ is with odd multiplicity. This contradicts (\ref{eqa3}).
\end{proof}
\section{Multiple solutions}\label{S}
\noindent In this section we prove multiciplity results by distinguishing again the two cases $1<p<2$ and $p>2$. We recall the following definition which will be used in this section. Let $X$ be a Banach space and $\Omega\subset X$ an open bounded domain which is symmetric with respect to the origin of $X,$ that is, $u\in\Omega\Rightarrow -u\in\Omega.$ Let $\Gamma$ be the class of all the symmetric subsets $A\subseteq X\backslash\{0\}$ which are closed in $X\backslash\{0\}.$
\begin{definition}\textup{(Krasnoselski genus)}\\
\textup{Let $A\in\Gamma$. The genus of $A$ is the least integer $p\in\mathbb{N}^*$ such that there exists $\Phi: A\rightarrow\mathbb{R}^p$ continuous, odd and such that $\Phi(x)\neq 0$ for all $x\in A.$ The genus of $A$ is usually denoted by $\gamma(A)$.}
\end{definition}
\begin{theorem}
\textup{Let $1<p<2$ or $2<p<\infty$, and suppose that $\lambda\in (\lambda^D_k,\lambda^D_{k+1})$ for any $k\in\mathbb{N^*}.$ Then equation (\ref{e3}) has at least $k$ pairs of nontrivial solutions.}
\end{theorem}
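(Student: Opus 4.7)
The plan is to apply a Krasnoselski genus minimax scheme to the even $C^1$ energy functional attached to (\ref{e3}), treating $p>2$ and $1<p<2$ separately because the functional-analytic setting differs.

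For $p>2$, I would work with $F_\lambda$ on $W^{1,p}_0(\Omega)$ as introduced in Section \ref{S2}. This functional is even, $C^1$, and by Lemma \ref{l3} coercive, hence bounded below. First I would verify the Palais--Smale condition: coercivity gives boundedness of any PS sequence, and combining weak convergence with the strong monotonicity inequality of Lemma \ref{L3}(i) upgrades weak to strong convergence. Next, with $\Gamma_j=\{A\subset W^{1,p}_0(\Omega)\setminus\{0\} : A \text{ closed, symmetric, } \gamma(A)\ge j\}$, I would define the minimax levels $c_j=\inf_{A\in\Gamma_j}\sup_{u\in A} F_\lambda(u)$ for $j=1,\dots,k$. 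To ensure $c_j<0$ (so that $0$ is excluded from the critical set detected by $c_j$), I would test on small spheres of $V_k=\spann\{e_1,\dots,e_k\}$: on $V_k$ one has $\|\nabla u\|_2^2\le \lambda^D_k\|u\|_2^2$, so
\[
F_\lambda(u)\le \tfrac{1}{2}(\lambda^D_k-\lambda)\|u\|_2^2+\tfrac{1}{p}\|u\|_{1,p}^p,
\]
and since $\lambda>\lambda^D_k$, $p>2$ and all norms on $V_k$ are equivalent, the right side is negative on a sufficiently small sphere in $V_k$, a symmetric set of genus $k$. Standard Ljusternik--Schnirelmann theory then yields that each $c_j$ is a critical value of $F_\lambda$; and if several $c_j$ coincide, the critical set at that level has genus at least two and is therefore infinite. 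In either case at least $k$ pairs $\{\pm u_j\}$ of nontrivial critical points are produced.

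For $1<p<2$, the functional $J_\lambda$ on $W^{1,2}_0(\Omega)$ is not coercive, so I would instead constrain to the Nehari manifold $\mathcal{N}_\lambda$ introduced in Section \ref{S2}. It is a $C^1$ symmetric submanifold on which $J_\lambda(u)=(\tfrac{2}{p}-1)\int_\Omega|\nabla u|^p\,dx>0$. The boundedness of Palais--Smale sequences (Step 1 of the proof of Theorem \ref{th1}) and the contradiction argument of Step 2 extend to verify the PS condition for $J_\lambda\big|_{\mathcal{N}_\lambda}$. For an upper bound on the minimax levels I would use the fibering map $v\mapsto t(v)v$ from Theorem \ref{th1}, Step 1: each nonzero $v$ in $V_k$ projects continuously to a unique point $t(v)v\in\mathcal{N}_\lambda$, yielding a symmetric subset of $\mathcal{N}_\lambda$ of genus $k$, on which $J_\lambda=(\tfrac{2}{p}-1)\int_\Omega|\nabla(t(v)v)|^p\,dx$ is bounded above independently of the choice of $A$. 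Defining $c_j=\inf_{A\in\Gamma_j(\mathcal{N}_\lambda)}\sup_A J_\lambda$, $j=1,\dots,k$, over symmetric subsets of $\mathcal{N}_\lambda$ of genus at least $j$, the LS scheme produces $k$ critical points of $J_\lambda\big|_{\mathcal{N}_\lambda}$, each of which is (by the Lagrange multiplier computation of Step 4) a genuine eigenfunction of (\ref{e3}).

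The main obstacle I anticipate is the Palais--Smale verification in the second case: one must show that a bounded PS sequence $(u_n)\subset\mathcal{N}_\lambda$ converges strongly in $W^{1,2}_0(\Omega)$ \emph{and} that its limit remains on $\mathcal{N}_\lambda$, ruling out collapse to the trivial solution. Boundedness follows from Step 1 in the proof of Theorem \ref{th1}; strong convergence relies on the compact embedding $W^{1,2}_0(\Omega)\hookrightarrow L^2(\Omega)$ together with the strict monotonicity of $-\Delta_p-\Delta$; and ruling out a trivial limit reuses the scaling trick $v_n=u_n/\|u_n\|_2$ from Step 2 to force a contradiction with $\|v_n\|_2=1$. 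A parallel but easier difficulty in the first case is that the critical values must be shown to be distinct from zero, which is exactly the role of the negativity estimate on $V_k$ above.
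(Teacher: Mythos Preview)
Your proposal is correct and follows essentially the same strategy as the paper. In both cases you and the authors split into $p>2$ (direct genus minimax on the full space, coercive even functional, PS via strong monotonicity, negativity of levels tested on small spheres in $\spann\{e_1,\dots,e_k\}$) and $1<p<2$ (constrained minimax on the Nehari manifold, PS via the boundedness/scaling arguments of Theorem~\ref{th1}, genus-$k$ test sets built from the fibering map $v\mapsto t(v)v$); the paper names the abstract results explicitly (Clark's theorem for $p>2$, \cite[Proposition~10.8]{AM} for $1<p<2$) where you invoke generic Ljusternik--Schnirelmann theory, but the arguments coincide.
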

\begin{proof}
\noindent Case 1: $1<p<2$.\\
In this case we will avail of [\cite{AM}, Proposition 10.8].
We consider the energy functional $I_{\lambda}: W^{1,2}_0(\Omega)\backslash\{0\}\rightarrow\mathbb{R}$ associated to the problem (\ref{e3}) defined by
$$I_{\lambda}(u)=\frac{2}{p}\int_{\Omega}|\nabla u|^p~dx+\int_{\Omega}|\nabla u|^2~dx-\lambda\int_{\Omega}u^2~dx.$$
The functional $I_{\lambda}$ is not bounded from below on $W^{1,2}_0(\Omega)$, so we consider again the natural constraint set, the Nehari manifold on which we minimize the functional $I_{\lambda}.$ The Nehari manifold is given by
$$\mathcal{N}_{\lambda}:=\{u\in W^{1,2}_0(\Omega)\backslash\{0\}:~\langle I'_{\lambda}(u),u\rangle=0\}.$$
On $\mathcal{N}_{\lambda},$ we have $I_{\lambda}(u)=(\frac{2}{p}-1)\displaystyle{\int_{\Omega}}|\nabla u|^p~dx>0.$ We clearly have that,
 $I_{\lambda}$ is even and bounded from below on $\mathcal{N}_{\lambda}.$\\ Now, let us show that every (PS) sequence for $I_{\lambda}$ has a converging subsequence on $\mathcal{N}_{\lambda}.$ Let $(u_n)_n$ be a $(PS)$ sequence, i.e, $|I_{\lambda}(u_n)|\leq C$, for all $n$, for some $C>0$ and $I'_{\lambda}(u_n)\rightarrow 0$ in $W^{-1,2}(\Omega)$ as $n\rightarrow +\infty.$  \\
We first show that the sequence $(u_n)_n$ is bounded on $\mathcal{N}_{\lambda}$.
Suppose by contradiction that this is not true, so $\displaystyle{\int_{\Omega}}|\nabla u_n|^2~dx\rightarrow +\infty$ as $n\rightarrow +\infty.$ Since $I_{\lambda}(u_n)=(\frac{2}{p}-1)\displaystyle{\int}_{\Omega}|\nabla u_n|^p~dx$ we have $\displaystyle{\int}_{\Omega}|\nabla u_n|^p~dx\leq c.$ On $\mathcal{N}_{\lambda}$, we have
 \begin{equation}\label{m1}
 0<\int_{\Omega}|\nabla u_n|^p~dx=\lambda\int_{\Omega}u_n^2~dx-\int_{\Omega}|\nabla u_n|^2~dx,
 \end{equation}
 and hence $\displaystyle{\int_{\Omega}}u_n^2~dx\rightarrow+\infty.$ Let $v_n=\frac{u_n}{\|u_n\|_2}$ then  $\displaystyle{\int_{\Omega}}|\nabla v_n|^2~dx\leq \lambda$ and hence $v_n$ is bounded in $W^{1,2}_0(\Omega).$ Therefore there exists $v_0\in W^{1,2}_0(\Omega)$ such that $v_n\rightharpoonup v_0$ in $W^{1,2}_0(\Omega)$ and $v_n\rightarrow v_0$ in $L^2(\Omega).$ Dividing (\ref{m1}) by $\|u_n\|^p_2,$ we have
 $$\frac{\lambda\displaystyle{\int_{\Omega}}u_n^2~dx-\int_{\Omega}|\nabla u_n|^2~dx}{\|u_n\|^p_2}=\int_{\Omega}|\nabla v_n|^p~dx\rightarrow 0,$$ since $\lambda\displaystyle{\int_{\Omega}}u_n^2~dx-\int_{\Omega}|\nabla u_n|^2~dx=(\frac{2}{p}-1)^{-1}I_{\lambda}(u_n)$, $|I_{\lambda}(u_n)|\leq C$ and $\|u_n\|^p_2\rightarrow +\infty.$ Now, since $v_n\rightharpoonup v_0$ in $W^{1,2}_0(\Omega)\subset W^{1,p}_0(\Omega),$ we infer that $$\int_{\Omega}|\nabla v_0|^p~dx\leq \liminf_{n\rightarrow +\infty}\int_{\Omega}|\nabla v_n|^p~dx=0,$$ and consequently $v_0=0.$ So $v_n\rightarrow 0$ in $L^2(\Omega)$ and this is a contradiction since $\|v_n\|_2=1.$ So $(u_n)_n$ is bounded on $\mathcal{N}_{\lambda}$.\\ Next, we show that $u_n$ converges strongly to $u$ in $W^{1,2}_0(\Omega).$\\ To do this, we will use the following vectors inequality for $1<p<2$
 \begin{equation*}
 (|x_2|^{p-2}x_2-|x_1|^{p-2}x_1)\cdot(x_2-x_1)\geq C'(|x_2|+|x_1|)^{p-2}|x_2-x_1|^2,
 \end{equation*}
 for all $x_1,x_2\in\mathbb{R}^N$ and for some $C'>0, $ [see~\cite{PL1}].\\
 We have $\displaystyle{\int_{\Omega}}u_n^2~dx\rightarrow\displaystyle{\int_{\Omega}}u^2~dx$ and since $I_{\lambda}'(u_n)\rightarrow 0$ in $W^{-1,2}(\Omega),$ $u_n\rightharpoonup u$ in $W^{1,2}_0(\Omega),$ we also have $I_{\lambda}'(u_n)(u_n-u)\rightarrow 0$ and $I_{\lambda}'(u)(u_n-u)\rightarrow 0$ as $n\rightarrow +\infty.$ On the other hand, one has
 \begin{eqnarray*}
 \langle I'_{\lambda}(u_n)- I'_{\lambda}(u), u_n-u\rangle &=& 2\left[\int_{\Omega}\left(|\nabla u_n|^{p-2}\nabla u_n-|\nabla u|^{p-2}\nabla u\right)\cdot \nabla(u_n-u)~dx\right]\\
 &+& 2\int_{\Omega}|\nabla(u_n-u)|^2~dx-2\lambda\int_{\Omega}|u_n-u|^2~dx\\
 &\geq & C'\int_{\Omega}\left(|\nabla u_n|+|\nabla u|\right)^{p-2}|\nabla (u_n-u)|^2~dx\\
 &+& 2\int_{\Omega}|\nabla(u_n-u)|^2~dx-2\lambda\int_{\Omega}|u_n-u|^2~dx\\
 &\geq&  2\int_{\Omega}|\nabla(u_n-u)|^2~dx-2\lambda\int_{\Omega}|u_n-u|^2~dx\\
 &\geq & \|u_n-u\|^2_{1,2}-\lambda\int_{\Omega}|u_n-u|^2~dx.
 \end{eqnarray*}
 Therefore $\|u_n-u\|_{1,2}\rightarrow 0$ as $n\rightarrow +\infty$ and $u_n$ converges strongly to $u$ in $W^{1,2}_0(\Omega).$\\
 \\
 Let $\Sigma'=\{A\subset\mathcal{N}_{\lambda}:~A~\text{closed}~\text{and}~-A=A\}$ and $\Gamma_j=\{A\in\Sigma':~\gamma(A)\geq j\},$ where $\gamma(A)$ denotes the Krasnoselski's genus. We show that $\Gamma_j\neq\emptyset.$
 \\
Set $E_j=\text{span}\{e_i,~~i=1,\dots,j\},$ where $e_i$ are the eigenfunctions associated to the problem (\ref{e1}). Let $\lambda\in (\lambda^D_j,\lambda^D_{j+1}),$ and consider $v\in S_{j}:=\{ v\in E_j:~\int_{\Omega}|v|^2~dx=1\}.$ Then set
$$\rho(v)=\left[\frac{\int_{\Omega}|\nabla v|^p~dx}{\lambda\int_{\Omega}v^2~dx-\int_{\Omega}|\nabla v|^2~dx}\right]^{\frac{1}{2-p}}.$$ Then $\lambda\int_{\Omega}v^2~dx-\int_{\Omega}|\nabla v|^2~dx\geq \lambda\int_{\Omega}v^2~dx-\sum\limits_{i=1}^{j}\int_{\Omega}\lambda_i| e_i|^2~dx\geq (\lambda-\lambda_j)\int_{\Omega}|v|^2~dx > 0$. Hence, $\rho(v)v\in \mathcal{N}_{\lambda},$ and then $\rho(S_j)\in\Sigma',$ and $\gamma(\rho(S_j))=\gamma(S_j)=j$ for $1\leq j\leq k,$ for any $k\in\mathbb{N}^*$.\\
It is then standard ([see \cite{AM}, Proposition 10.8]) to conclude that $$\sigma_{\lambda,j}=\inf_{\gamma(A)\geq j}\sup_{u\in A} I_{\lambda}(u),~~1\leq j\leq k,~~\text{for any }~k\in\mathbb{N}^*$$ yields $k$ pairs of nontrivial critical points for $I_{\lambda},$ which gives rise to $k$ nontrivial solutions of problem (\ref{e3}).\\
\\
Case 2: $ p>2$.\\
In this case, we will rely on the following theorem.\\
\textbf{Theorem} \textup{(Clark, \cite{Cl}) \label{cl1}}.\\
\textup{Let $X$ be a Banach space and $G\in C^1(X,\mathbb{R})$ satisfying the Palais-Smale condition with $G(0)=0.$ Let $\Gamma_k =\{~A\in\Sigma~:~\gamma(A)\geq k~\}$ with $\Sigma= \{~A\subset X~;~A=-A~\text{and}~A~\text{closed}~ \}.$ If $c_k=\inf\limits_{A\in \Gamma_k}\sup\limits_{u\in A}G(u)\in (-\infty, 0),$ then $c_k$ is a critical value.}\\

\noindent Let us consider the $C^1$ energy functional  $I_{\lambda}: W^{1,p}_0(\Omega)\rightarrow\mathbb{R}$ defined as $$I_{\lambda}(u)=\frac{2}{p}\int_{\Omega}|\nabla u|^p~dx+\int_{\Omega}|\nabla u|^2~dx-\lambda\int_{\Omega}|u|^2~dx.$$ 
We want to show that 
\begin{equation}\label{E5}
-\infty<\sigma_j=\inf_{\{A\in\Sigma', \gamma(A)\geq j\}}\sup_{u\in A}I_{\lambda}(u)
\end{equation} 
is a critical point for $I_{\lambda}$, where $\Sigma'=\{A\subseteq S_j\},$ where $S_j=\{v\in E_j~:~\int_{\Omega}|v|^2~dx=1\}.$ \\
We clearly have that $I_{\lambda}(u)$ is  an even functional for all $u\in W^{1,p}_0(\Omega)$, and also $I_{\lambda}(u)$ is bounded from below on $W^{1,p}_0(\Omega)$ since $I_{\lambda}(u)\geq C\|u\|_{1,p}^p-C'\|u\|^2_{1,p}$. \\
\\
We show that $I_{\lambda}(u)$ satisfies the (PS) condition. Let $\{u_n\}$ be a Palais-Smale sequence, i.e., $|I_{\lambda}(u_n)|\leq M$ for all $n,$ $M>0$ and $I_{\lambda}'(u_n)\rightarrow 0$ in $W^{-1,p'}(\Omega)$ as $n\rightarrow\infty.$  We first show that $\{u_n\}$ is bounded in $W^{1,p}_0(\Omega).$ We have 
\begin{eqnarray*}
M&\geq & |C\|u_n\|_{1,p}^p-C'\|u_n\|^2_{1,p}|\geq \left( C\|u_n\|_{1,p}^{p-2}-C'\right) \|u_n\|_{1,p}^2,
\end{eqnarray*}
and so $\{u_n\}$ is bounded in $W^{1,p}_0(\Omega).$
Therefore, $u\in W^{1,p}_0(\Omega)$ exists such that, up to subsequences that we will denote by $(u_n)_n$ we have $u_n\rightharpoonup u$ in $W^{1,p}_0(\Omega)$ and $u_n\rightarrow u$ in $L^2(\Omega).$\\
We will use the following inequality for $v_1,v_2\in\mathbb{R}^N:$ there exists $R>0$ such that
$$|v_1-v_2|^p\leq R\left(|v_1|^{p-2}v_1-|v_2|^{p-2}v_2\right)(v_1-v_2),$$ for~$p>2$ [\text{see} \cite{PL1}]. Then we obtain
\begin{eqnarray*}
\langle I'_{\lambda}(u_n)-I'_{\lambda}(u),u_n-u\rangle &=&2\int_{\Omega}\left(|\nabla u_n|^{p-2}\nabla u_n-|\nabla u|^{p-2}\nabla u\right)\cdot \nabla(u_n-u)~dx+2\int_{\Omega}|\nabla u_n-\nabla u|^2~dx\\ &-&2\lambda\int_{\Omega}|u_n-u|^2~dx\\
&\geq & \frac{2}{R}\int_{\Omega}|\nabla u_n-\nabla u|^p~dx +2 \int_{\Omega}|\nabla u_n-\nabla u|^2~dx-2\lambda\int_{\Omega}|u_n-u|^2~dx\\
&\geq & \frac{2}{R}\|u_n-u\|^p_{1,p}-2\lambda\int_{\Omega}|u_n-u|^2~dx.
\end{eqnarray*}
Therefore $\|u_n-u\|_{1,p}\rightarrow 0$ as $n\rightarrow +\infty$, and so $u_n$ converges to $u$ in $W^{1,p}_0(\Omega).$\\
\\
Next, we show that there exists sets $A_j$ of genus $j=1,\dots,k$ such that $\sup\limits_{u\in A_j}I_{\lambda}(u)<0.$\\
\\
Consider $E_j=\text{span}\{e_i,~i=1,\dots,j\}$ and $S_j=\{v\in E_j:~\int_{\Omega}|v|^2~dx=1\}.$ For any $s\in (0,1)$, we define the set $A_j(s):=s(S_j\cap E_j)$ and so $\gamma(A_j(s))=j$ for $j=1,\dots,k.$ We have, for any $s\in (0,1)$
\begin{eqnarray}\notag
\sup\limits_{u\in A_j}I_{\lambda}(u) &= & \sup\limits_{v\in S_j\cap E_j}I_{\lambda}(sv)\\\notag
&\leq& \sup\limits_{v\in S_j\cap E_j}\left\{\frac{s^p}{p}\int_{\Omega}|\nabla v|^pdx+\frac{s^2}{2}\int_{\Omega}|\nabla v|^2dx-\frac{\lambda s^2}{2}\int_{\Omega}|v|^2dx\right\}\\
&\leq & \sup\limits_{v\in S_j\cap E_j}\left\{\frac{s^p}{p}\int_{\Omega}|\nabla v|^pdx+\frac{s^2}{2}(\lambda_j-\lambda)\right\}<0
\end{eqnarray}
for $s>0$ sufficiently small, since $\displaystyle{\int_{\Omega}}|\nabla v|^p~dx\leq c_j$, where $c_j$ denotes some positive constant.\\
Finally, we conclude that $\sigma_{\lambda,j}$ $(j=1,\dots,k)$ are critical values thanks to Clark's Theorem.
\end{proof}

\end{document}